\author{Dustin Ross and Zhengyu Zong}
\address{University of Michigan, Department of Mathematics, Ann Arbor, MI 48109, USA}
\email{dustyr@umich.edu}
\address{Columbia University, Department of Mathematics, New York, NY 10027, USA}
\email{zz2197@math.columbia.edu}
\newcommand{\bq}{{\mathrm q}}
\newcommand{\fq}{\mathfrak{q}}
\newcommand{\cal}{\mathcal}
\newcommand{\bC}{{\mathbb C}}
\newcommand{\bE}{{\mathbb E}}
\newcommand{\bN}{{\mathbb N}}
\newcommand{\bP}{{\mathbb P}}
\newcommand{\bZ}{{\mathbb Z}}
\newcommand{\cB}{{\cal B}}
\newcommand{\cD}{{\cal D}}
\newcommand{\cG}{{\cal G}}
\newcommand{\cM}{{\cal M}}
\newcommand{\cN}{{\cal N}}
\newcommand{\cO}{{\cal O}}
\newcommand{\cT}{{\cal T}}
\newcommand{\cU}{{\cal U}}
\newcommand{\cX}{{\cal X}}
\newcommand{\Mbar}{\overline{\cM}}
\DeclareMathOperator{\Aut}{Aut}
\DeclareMathOperator{\rk}{rk}
\DeclareMathOperator{\Cont}{Cont}
\newcommand{\tz}{\tilde{z}}
\newtheorem{theorem}{Theorem}[section]
\newtheorem{lemma}[theorem]{Lemma}
\newtheorem{conjecture}{Conjecture}[section]
\newtheorem{corollary}{Corollary}[section]
\theoremstyle{remark}
\newtheorem{remark}{Remark}[section]
\newtheorem{notation}{Notational Convention}[section]
\theoremstyle{definition}
\newtheorem{definition}{Definition}[section]
\begin{document}
\title{Cyclic Hodge Integrals and Loop Schur Functions}

\maketitle

\begin{abstract}
We conjecture an evaluation of three-partition cyclic Hodge integrals in terms of loop Schur functions.  Our formula implies the orbifold Gromov-Witten/Donaldson-Thomas correspondence for toric Calabi-Yau threefolds with transverse $A_n$ singularities.  We prove the formula in the case where one of the partitions is empty, and thus establish the orbifold Gromov-Witten/Donaldson-Thomas correspondence for local toric surfaces with transverse $A_n$ singularities.
\end{abstract}

\section{Introduction}

\subsection{Statement of Results}

This paper investigates the relation between the Gromov-Witten theory and the Donaldson-Thomas theory of Calabi-Yau $3$-orbifolds with transverse $A_n$ singularities.  The Gromov-Witten partition function $GW^\bullet(\cX;{\bf x},u,v)$ is a formal series encoding intersection numbers on the moduli stack of orbifold stable maps to $\cX$ and the Donaldson-Thomas partition function $DT'(\cX;{\bf q},v)$ is a formal series encoding intersection numbers on the Hilbert scheme of substacks in $\cX$.  We study the following conjecture, first suggested by Bryan-Cadman-Young \cite{bcy:otv}.

\begin{conjecture}\label{gwdt}
There is an explicit change of variables ${\bf q}({\bf x}, u)$ such that \[DT'(\cX;{\bf q},v)=GW^\bullet(\cX;{\bf x},u,v).\]
\end{conjecture}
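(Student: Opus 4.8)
\medskip
\noindent\textbf{Proof proposal.}

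The plan is to reduce Conjecture~\ref{gwdt} to a purely local ``cyclic vertex'' statement and then to the conjectural evaluation of three-partition cyclic Hodge integrals that is the central object of this paper. A toric Calabi-Yau $3$-orbifold $\cX$ with transverse $A_n$ singularities is encoded by a trivalent planar web whose edges carry the transverse $A_n$ data. First I would install the gluing formalism on both sides: on the Gromov-Witten side, degenerate $\cX$ along its toric divisors and apply the orbifold degeneration formula (Abramovich-Fantechi), or equivalently use virtual localization for the torus acting on $\cX$; on the Donaldson-Thomas side, run the analogous degeneration of the Hilbert scheme of substacks. In both cases the fixed loci (respectively, the strata) are indexed by colored partitions assigned to the edges of the web, and the partition functions factor as sums over edge-partitions of products of an ``edge factor'' for each edge and a ``vertex factor'' for each trivalent vertex. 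Consequently Conjecture~\ref{gwdt} follows once one matches the edge factors and the vertex factors under one and the same change of variables ${\bf q}({\bf x},u)$.

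Matching the edge factors should be comparatively routine. An edge of the web corresponds to an orbifold $\bP^1$ with an $A_n$-orbifold structure at its two nodes, and on both sides the equivariant edge factor is an explicit finite product of $q$-analogues determined by the normal data; one can either compare these directly, or deduce the comparison from the GW/DT correspondence for ordinary local curves (Maulik-Oblomkov-Okounkov-Pandharipande) together with the crepant resolution dictionary for $A_n$.

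Matching the vertex factors is the heart of the argument. By the orbifold localization computation, the Gromov-Witten vertex is precisely the generating function of \emph{three-partition cyclic Hodge integrals} --- integrals over moduli of stable maps to $[\bC^2/\bZ_{n+1}]$ against three Hodge-type classes, one for each leg. The Donaldson-Thomas vertex was computed by Bryan-Cadman-Young \cite{bcy:otv} as a generating function over $\bZ_{n+1}$-colored $3$D partitions, which they re-express in terms of \emph{loop Schur functions}. Hence Conjecture~\ref{gwdt} reduces to the formula conjectured in this paper: the equality of the cyclic Hodge generating function with the loop Schur expression, after the prescribed change of variables. I expect this cyclic Hodge $=$ loop Schur identity to be the main obstacle. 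In the fully general three-leg case there is no known cut-and-join equation, no Laplace transform of an ELSV-type formula, and no matrix-model representation that forces the two sides to coincide, which is exactly why it remains conjectural; presumably one must instead gather strong structural constraints on the cyclic Hodge side (dilaton/string-type equations, orbifold Mumford relations, the $\bZ_{n+1}$-action on the Hodge bundle) and show they pin the series down.

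When one of the three partitions is empty the situation is tractable and the formula can be proved, which is the theorem announced in the abstract. In that case the cyclic vertex specializes to the Gromov-Witten theory of a \emph{local toric surface} with transverse $A_n$ singularities --- the canonical bundle of a toric orbifold surface --- where genuine tools are available: the orbifold ELSV-type formula and its Laplace transform (Johnson-Pandharipande-Tseng), the orbifold Mari\~no-Vafa formula, and the cut-and-join / TQFT structure of the relevant relative orbifold invariants. The plan there is to prove that both sides of the one-empty-partition formula --- cyclic Hodge integrals on one side, loop Schur functions on the other --- obey the same cut-and-join recursion with matching initial data, reducing the identity to a finite base case; combining this with the edge matching then yields the orbifold Gromov-Witten/Donaldson-Thomas correspondence for local toric surfaces with transverse $A_n$ singularities.
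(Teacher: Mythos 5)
Your overall architecture coincides with the paper's: Conjecture~\ref{gwdt} is reduced to a vertex-by-vertex local correspondence over the toric web diagram (this is Theorem~\ref{thm:imply}, proved in Section~\ref{sec:gluing}), the three-leg local identity is correctly identified as the genuinely open core (it remains Conjecture~\ref{conjecture} in the paper as well), and the two-leg case is the part that actually gets proved, yielding the local-toric-surface statement of Theorem~\ref{thm:main}. Neither your proposal nor the paper proves the full conjecture; you are candid about that, and so is the paper. The differences are in the execution. First, the paper's local decomposition on the GW side comes from torus localization and the orbifold vertex of \cite{r:lgoa}, not from a degeneration formula, and the edge matching is not deduced from the smooth correspondence of \cite{moop:gwdtc} plus a crepant resolution dictionary but is carried out directly as a bookkeeping of signs, framings, orientations and character twists in the gluing algorithm of Section~\ref{sec:gluing}. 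Second, and more substantively, the two-leg proof does not use cut-and-join equations, orbifold ELSV, or the Mari\~no--Vafa machinery you invoke (except insofar as the one-leg base case $\tau=\emptyset$ is imported from \cite{rz:ggmv}). Instead, Section~\ref{sec:relations} localizes auxiliary integrals on two toric orbifold surfaces to pin down the exact dependence of the two-leg vertex on the weights $w$ (Lemmas~\ref{frame:s2}--\ref{frame:a3}), reducing everything to the special weights $w_s$ and $w_a$; the initial values are then computed in closed form using the $\bZ_n$-Mumford relation, the Faber--Pandharipande $\lambda_g$ identity and the Cauchy identity, and in the asymmetric case a part-moving symmetry \eqref{conjsym} reduces to $\tau=\emptyset$ modulo the determinantal loop-Schur identity of Theorem~\ref{thm:comb}. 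Your route is in the spirit of the one-partition Mari\~no--Vafa proofs, whereas the paper's is modeled on the two-partition Hodge integral strategy of \cite{llz:ftphi}; the latter has the advantage that the framing relations and the combinatorial identity are exactly matched to the DT framing factors in Definition~\ref{def:dt}. One small inaccuracy to correct: the cyclic Hodge integrals here live on $\Mbar_{g,\gamma}(\cB\bZ_n)$, the moduli of stable maps to the classifying stack, not on moduli of maps to $[\bC^2/\bZ_{n+1}]$.
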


The variables $\bf{q}$ and $\bf{x}$ are subdivided into sets of variables corresponding to each singular line in $\cX$; in Section \ref{sec:conjecture} we describe the precise change of variables between these sets of variables.

In the toric case, both the GW and DT partition functions can be decomposed into contributions defined locally at each torus fixed point \cite{r:lgoa,bcy:otv}, the so-called orbifold topological vertex.  One motivation for the topological vertex is to use this decomposition to reduce global phenomena to the local setting.  We carry out such a reduction explicitly for Conjecture \ref{gwdt}.

\begin{theorem}[Theorem \ref{thm:imply}]
In the toric setting, Conjecture \ref{gwdt} is implied by an explicit correspondence between the contributions at each torus fixed point.
\end{theorem}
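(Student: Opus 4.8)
**Proof Proposal for "In the toric setting, Conjecture 1.1 is implied by an explicit correspondence between the contributions at each torus fixed point."**

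The plan is to leverage the known decomposition of both partition functions into local contributions and show that a term-by-term matching of local contributions (the "orbifold topological vertex correspondence") forces the global equality. First I would recall the gluing formulas from the literature: on the DT side, the topological vertex formalism of \cite{bcy:otv} expresses $DT'(\cX;{\bf q},v)$ as a sum over edge-assignments (effective curve classes together with the partitions labeling the asymptotic profiles along the toric edges) of a product over vertices of local vertex factors $W$ and a product over edges of gluing (edge) factors; on the GW side, \cite{r:lgoa} gives the parallel decomposition of $GW^\bullet(\cX;{\bf x},u,v)$ as a sum over the same combinatorial data of products of local GW vertex contributions and matching edge factors. Both decompositions are organized by the toric graph $\Gamma_\cX$ of $\cX$, with vertices corresponding to torus fixed points and edges to torus-invariant curves.

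The key steps, in order: (1) fix the precise change of variables ${\bf q}({\bf x},u)$ from Section \ref{sec:conjecture}, noting that it is defined line-by-line over the singular locus, hence compatible with the edge/leg structure of the toric graph; (2) state the local correspondence hypothesis precisely as an equality, under this change of variables, between the orbifold DT vertex $W$ and the orbifold GW vertex at each fixed point, with matching normalizations on the half-edges; (3) verify that the edge (gluing) factors on the two sides also agree under the change of variables — this is typically an elementary computation with the framing/hook-length type factors, but it must be checked that the same change of variables that matches vertices also matches edges, which is the crux of the compatibility; (4) conclude by substituting the local equalities into the global gluing formulas: since both global partition functions are identical finite/formal sums indexed by the same combinatorial data (edge assignments over $\Gamma_\cX$), and each summand is a product of factors that agree term-by-term, the sums agree.

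The main obstacle I anticipate is step (3) together with the bookkeeping of half-edge contributions in step (2): the vertex contributions on each side carry "legs" decorated by partitions (and, in the orbifold setting, by elements of the cyclic groups $\bZ_a$ labeling the age/twisted sectors), and one must ensure that the framing conventions, the orderings of legs around each vertex, and the normalization of the partition-indexed edge variables are chosen consistently so that the product over edges reassembles correctly. In the orbifold case there is the additional subtlety that the decomposition variables along a singular line carry a representation-theoretic grading (partitions of colored/loop type), so one must check that the change of variables respects this grading edge-by-edge. Once the local correspondence and the edge-factor compatibility are in hand, the passage to the global statement is formal — it is just the observation that equal building blocks assembled by the same recipe yield equal results — so essentially all the content of the theorem is in reducing the global identity to the clean local statement and in isolating exactly what that local statement must say.
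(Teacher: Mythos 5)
There is a genuine gap in your step (4), and it stems from a mischaracterization of what the local correspondence says. You treat the local statement as a term-by-term equality of vertex factors and conclude that the two global partition functions are ``identical sums indexed by the same combinatorial data'' whose summands agree factor by factor. But the two gluing formulas are \emph{not} indexed by the same data: the GW decomposition of \cite{r:lgoa} is a sum over edge assignments of \emph{conjugacy classes} (ramification profiles $\mu_e$ in $\bZ_{n(e)}\wr S_d$), while the DT decomposition of \cite{bcy:otv} is a sum over edge assignments of \emph{irreducible representations} $\lambda_e$. Conjecture \ref{conjecture} is accordingly not an equality of matching summands but a change of basis: it expresses the conjugacy-class-indexed GW vertex as a sum of representation-indexed DT vertices weighted by $\chi_{\rho^+}(\tau^+)\chi_{\rho^-}(\tau^-)\chi_\lambda(\mu)/(z_{\tau^+}z_{\tau^-}z_\mu)$. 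The actual derivation substitutes this into the GW gluing formula \eqref{gwpot1} and then, for each edge, collapses the sum over $\mu_e$ using \emph{orthogonality of characters} of $\bZ_{n(e)}\wr S_d$; this is what converts the conjugacy-class sum into the representation-indexed sum \eqref{gwpot2} that can be compared with the DT side. ``Equal building blocks assembled by the same recipe'' is not the right picture, because the building blocks live in dual bases and the edge sum is a Fourier transform, not a bijection of summands.

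A second, related omission: on an edge with $n(e)>1$ the label is not even literally the same as seen from its two endpoints --- at the final vertex the $n(e)$-partition is twisted to $\{\xi^{(\mu_e)_j^i k_e-i}\mu_j^i\}$ to account for the gerbe degree $k_e$ of the line $C_e\cong\cG_{k_e}$. Carrying the orthogonality argument through therefore requires the identity $\chi_\lambda(\{\xi^{(\mu_e)_j^ik-i}\mu_j^i\})=\xi^{-k\sum_i i|\lambda_i|}\overline{\chi_\lambda(\mu)}$, which produces the extra edge factor $\xi^{-k_e\sum_i i|(\lambda_e)_i|}$ appearing in \eqref{gwpot2}; this factor must then be absorbed into the comparison with the DT edge terms of \cite{bcy:otv} (together with the sign bookkeeping $(-1)^{(m_e+\delta_0+\delta_\infty)|\lambda_e|}$, the left/right normal-bundle convention, and the transpose on the gerby leg coming from the DT symmetry \eqref{dtsym}). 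You correctly flag framing and leg-ordering as obstacles, but without the character-orthogonality step and the twisting identity your argument cannot get from the GW gluing formula to the DT one, since there is no term-by-term correspondence to invoke.
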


In \cite{r:lgoa}, the local GW partition function at each torus fixed point was computed explicitly in terms of three-partition cyclic Hodge integrals on the moduli stack of stable maps into the classifying stacks $\cB\bZ_n$.  These local contributions are indexed by triples of conjugacy classes $(\mu^1,\mu^2,\mu^3)$ in the generalized symmetric groups $\bZ_n\wr S_{|\mu^i|}$.  In \cite{bcy:otv}, the local DT contributions at each torus fixed point were computed in closed form in terms of loop Schur functions.  The local DT contributions are indexed by triples of irreducible representations $(\sigma^1,\sigma^2,\sigma^3)$ in $\bZ_n\wr S_{|\sigma^i|}$.  The local GW/DT correspondence of Theorem \ref{thm:imply} relates the local partition functions through a change of variables and by identifying the indexing triples via the character table of $\bZ_n\wr S_d$.  The main result of the current paper is the following.

\begin{theorem}[Theorem \ref{thm:main}]
The local GW/DT correspondence holds whenever $\mu^i=\sigma^i=\emptyset$ for at least one $i$.
\end{theorem}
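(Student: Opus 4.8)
The plan is to reduce the three-partition statement with one empty partition to a two-partition identity, and then to match the Gromov-Witten and Donaldson-Thomas sides by expanding both in terms of a common Fock-space/symmetric-function basis. Concretely, when $\mu^3=\sigma^3=\emptyset$ the cyclic Hodge integrals on $\cB\bZ_n$ that build the local GW vertex degenerate: the usual gluing/degeneration formula for the relative moduli spaces collapses the third leg, and the remaining generating function can be packaged as a two-leg amplitude. On the DT side the analogous vanishing holds for the loop Schur function expression of Bryan-Cadman-Young, since a loop Schur function indexed by the empty partition is $1$ and the edge/vertex factors simplify accordingly. So the first step is to write down carefully the two specializations and record that the problem becomes: show the two-partition local GW vertex equals the two-partition local DT vertex after the prescribed change of variables and the character-table identification of $(\mu^1,\mu^2)$ with $(\sigma^1,\sigma^2)$.

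The second step is to compute the two-leg GW vertex in closed form. Here I would use the same strategy that proves the ordinary (non-orbifold) two-leg topological vertex: apply the operator formalism of Okounkov-Pandharipande / Bryan-Pandharipande for Hodge integrals, but now equivariantly for the $\bZ_n$-action. The cyclic Hodge integrals with two nonempty partitions should be expressible via a product of a small number of explicit operators on the ``cyclic'' Fock space $\bigotimes \Lambda$ (one tensor factor per element of $\bZ_n$, or equivalently the representation ring of $\bZ_n\wr S_\bullet$), and the key input is the $\lambda_g$-type vanishing (the Mariño-Vafa / ELSV-style formula) adapted to $\cB\bZ_n$, which has already been established in \cite{r:lgoa}. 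This produces a formula for the GW vertex as a bilinear pairing built from the $\widehat{\mathcal{GV}}$-type hook-length contributions.

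The third step is to match this against the loop Schur function formula on the DT side. The loop Schur function $s_\lambda$ associated to an $n$-periodic Young diagram has a hook-content product expansion (again from \cite{bcy:otv}); under the character-table change of basis from irreducible-representation labels $\sigma$ to conjugacy-class labels $\mu$, a loop Schur function transforms into a loop-power-sum combination exactly mirroring the Fock-space operators appearing in step two. So the computation becomes a purely combinatorial identity between the cyclic/loop analogue of the Cauchy identity on one side and the explicit hook-length product on the other, together with bookkeeping of the change of variables ${\bf q}({\bf x},u)$, which in the two-leg case is forced (up to the usual $u\leftrightarrow -u$ and sign subtleties) by comparing the one-leg specialization with the known ${\cal W}$-measure / Hilbert scheme of points on $[\bC^2/\bZ_n]$.

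I expect the main obstacle to be the precise matching of the change of variables and the discrete Fourier transform (character table of $\bZ_n\wr S_d$) simultaneously: each side is naturally written in a different basis — GW in the conjugacy-class (loop power-sum) basis, DT in the irreducible-representation (loop Schur) basis — and the subtlety is that the $\bZ_n$-equivariant parameters enter the Fourier transform in a way that must be reconciled with the roots of unity appearing in the orbifold Hodge integrals. Getting the signs, the $q \mapsto e^{iu}$-type substitution, and the framing factors to line up across this transform, uniformly in $n$ and in the two partitions, is where the real work lies; the rest is an application of known vanishing results and Cauchy-type identities for loop Schur functions.
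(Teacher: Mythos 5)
Your overall architecture (reduce to a two-leg statement, evaluate the two-leg GW vertex, and match it to the loop Schur expression through the character table of $\bZ_n\wr S_d$) is consistent with the paper, and your symmetric-function endgame does resemble the paper's use of the Cauchy identity. But there is a genuine gap at the center of your argument: step two asserts that the two-partition cyclic Hodge integrals ``should be expressible'' via an operator formalism whose key input, an ELSV/Mari\~no--Vafa-type evaluation adapted to $\cB\bZ_n$, you claim ``has already been established in \cite{r:lgoa}.'' It has not: \cite{r:lgoa} only expresses the local GW contributions \emph{in terms of} these cyclic Hodge integrals; their evaluation in the one-leg case already required the separate papers \cite{z:gmvf,rz:ggmv}, and the two-leg evaluation is precisely the content of the present theorem. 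You cannot cite the needed formula into existence. The paper supplies the missing mechanism by localizing auxiliary integrals on two orbifold surfaces $X_s$ and $X_a$ (Section \ref{sec:relations}) to derive the framing dependence (Lemmas \ref{frame:s2} and \ref{frame:a2}), which reduces everything to two initial framings $w_s$ and $w_a$ where the integrals collapse --- via the $\bZ_n$-Mumford relation and the Faber--Pandharipande $\lambda_g$ identity in the symmetric case, and via the symmetry \eqref{conjsym} that feeds parts of $\tau$ into $\mu$ one at a time until the known one-leg case of \cite{rz:ggmv} is reached in the asymmetric case.

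A second, related omission: you treat ``one empty partition'' as a single uniform situation, but the two-leg vertex splits into a symmetric case ($\mu=\emptyset$, no gerby leg) and an asymmetric case (one of $\tau^{\pm}$ empty, gerby leg present), and these require genuinely different arguments. The asymmetric case does not reduce to a classical Cauchy identity; it requires the new combinatorial identity of Theorem \ref{thm:comb}, relating loop Schur functions of $\bar\lambda$ to skew Schur functions in the specialized variables $\fq_{\bullet-\lambda}$, proved via a loop Jacobi--Trudi determinant and a delicate degree estimate as $m\to\infty$. That identity is the combinatorial heart of the proof and has no counterpart in your outline. Your diagnosis that the real work lies in reconciling signs and changes of variables across the Fourier transform understates the difficulty: the real work is establishing the two-partition evaluation and the loop-Schur identity at all.
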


As a corollary of Theorems \ref{thm:imply} and \ref{thm:main}, we obtain a complete proof of Conjecture \ref{gwdt} when $\cX$ is a local toric surface.

Our approach to Theorem \ref{thm:main} is to prove the comparison directly in two steps: we first use fixed point localization to develop a set of relations which determine the GW vertex, then we argue combinatorially that the DT vertex also satisfies these relations.  An alternative approach to the orbifold GW/DT correspondence is to reduce it to the smooth toric case where the GW/DT correspondence was proved in \cite{moop:gwdtc}.  We intend to study this approach in future work by utilizing the open crepant resolution correspondence of \cite{bcr:craos}.

\subsection{Context and Motivation}

Throughout the last 20 years, Gromov-Witten theory has attracted a considerable amount of attention in mathematics and physics.  Defined as intersection numbers on the moduli stack of stable maps, GW invariants are rational numbers in general.  It has long been expected that GW invariants can be expressed in terms of integer curve counts, and Donaldson-Thomas theory conjecturally provides an explanation for this expectation.  It was originally conjectured by Maulik-Nekrasov-Okounkov-Pandharipande \cite{mnop:gwdti} that the natural generating functions encoding GW and DT theory are equivalent after a change of variables in the formal parameters.

In recent years, both GW and DT theory have been generalized to orbifold targets \cite{cr:ogwt,bcy:otv}.  One might naturally expect the GW/DT correspondence also to extend in some cases.  In the case of toric varieties with transverse $A_n$ singularities, the orbifold correspondence has been studied previously by the authors \cite{r:lgoa,z:gmvf,rz:ggmv}, culminating in a complete proof for the case of local lines.  The current paper generalizes those results.

In the rest of the introduction, we describe more precisely the objects of study in order to make Conjecture \ref{gwdt} precise.

\subsubsection{Targets} Let $\cX$ be a Calabi-Yau (CY) $3$-orbifold which is either projective or toric.  We say that $\cX$ has \textit{transverse $A_n$ singularities} if the nontrivial orbifold structure of $\cX$ has cyclic isotropy and is supported on a union of disjoint lines.  Let $L=\{l\}$ be the support of the nontrivial orbifold structure and fix once and for all an isomorphism of each isotropy group with $\bZ_{n(l)}$.

\subsubsection{Gromov-Witten Theory}

GW invariants are virtual intersection numbers on $\Mbar_{g,m}(\cX,\beta)$, the Kontsevich moduli stack of genus $g$, $m$ pointed stable maps to $\cX$ of degree $\beta\in H_2(\cX,\bZ)$.  $\Mbar_{g,m}(\cX,\beta)$ decomposes into substacks indexed by the possible orbifold structure at each marked point.  More precisely, if $\gamma=\{\gamma^l\}$ with each $\gamma^l$ a tuple of elements in $\bZ_{n(l)}=\langle \xi_{n(l)} \rangle$ with $m_{l,k}$ entries equal to $\xi_{n(l)}^k$, then we denote by $\Mbar_{g,\gamma}(\cX,\beta)\subset\Mbar_{g,m}(\cX,\beta)$ the component in the moduli space with orbifold structure at the marked points on the source curve prescribed by $\gamma$.  If $\cX$ is projective, the GW partition function is defined by
\[
GW^{\bullet}({\bf x},u,v):=\exp\left(GW({\bf x},u,v)\right)
\]
where
\[
GW({\bf x},u,v):=\sum_{\substack{\beta\neq 0 \\ g,\gamma}}\left(\int_{[\Mbar_{g,\gamma}(\cX,\beta)]^{vir}}1\right)\frac{{\bf x}^\gamma}{\gamma!}u^{2g-2}v^\beta
\]
with
\[
\frac{{\bf x}^\gamma}{\gamma!}:=\prod_{l,k}\frac{x_{l,k}^{m_{l,k}(\gamma)}}{m_{l,k}(\gamma)!}.
\]
If $\cX$ is toric, then $\Mbar_{g,\gamma}(\cX,\beta)$ is not proper, but the GW partition function can still be defined by choosing a CY $\bC^*$ action on $\cX$ and replacing the above integral with
\[
\sum_{F\subset \Mbar_{g,\gamma}(\cX,\beta)^{\bC^*}}\int_{[F]^{vir}}\frac{1}{e^{eq}\left(\cN_F\right)}
\]
where the sum is over the fixed loci and the denominator is the equivariant Euler class of the normal bundle.

\subsubsection{Donaldson-Thomas Theory}

DT invariants are intersection numbers on $Hilb(\cX)$, the Hilbert scheme of substacks in $\cX$.  $Hilb(\cX)$ decomposes into subschemes indexed by the (compactly supported) $K$ group of coherent sheaves.  Let $\cO_{l,k}$ denote the skyscraper sheaf supported on a generic point of the orbifold line $l$ for which $\bZ_{n(l)}$ acts by multiplication by $\exp\left( \frac{2\pi\sqrt{-1}}{n(l)}k \right)$.  For $\gamma$ and $\beta$ as above, let $Hilb_{\gamma}(\cX,\beta)$ denote the component of the Hilbert scheme indexed by the class $[\cO_\beta]+\sum_{l,k}m_{l,k}[\cO_{l,k}]\in K(\cX).$  In \cite{bcy:otv}, orbifold DT invariants are defined via Behrend's constructible function $\nu:Hilb_{\gamma}(\cX,\beta)\rightarrow \bZ$ \cite{b:dttivmg}.  More precisely, the (multi-regular) DT partition function is defined by
\[
DT({\bf q},v):=\sum_{\beta,\gamma}e(Hilb_{\gamma}(\cX,\beta),\nu){\bf q}^\gamma v^\beta
\]
where
\[
e(Hilb_{\gamma}(\cX,\beta),\nu):=\sum_{k\in\bZ}ke(\nu^{-1}(k))
\]
with $e(-)$ the topological Euler characteristic.  For our purposes, we will be most interested with the reduced partition function
\[
DT'({\bf q},v):=\frac{DT({\bf q},v)}{DT({\bf q},v=0)}
\]

Notice that $\sum_{k}[\cO_{l,k}]=[\cO_{pt}]$ where $pt$ is a generic (smooth) point on $\cX$.  For later convenience, we introduce an additional variable $q$ and the relations $\prod_k q_{l,k}=q$ for any $l$.

\subsubsection{The GW/DT Correspondence}\label{sec:conjecture}

In order to make Conjecture \ref{gwdt} precise we define the change of variables ${\bf q}({\bf x},u)$ by
\[
 q\leftrightarrow -e^{\sqrt{-1}u}, \hspace{.5cm} q_{l,k}\leftrightarrow\xi_{n(l)}^{-1}e^{-\sum_i\frac{\xi_{n(l)}^{-ik}}{n(l)}(\xi_{2n(l)}^i-\xi_{2n(l)}^{-i})x_{l,i}} \hspace{.5cm} (k>0),
\]

With this change of variables, it is immediate that conjecture \ref{gwdt} generalizes the smooth GW/DT correspondence proposed by Maulik-Nekrasov-Okounkov-Pandharipande \cite{mnop:gwdti}.  Moreover, Maulik-Oblomkov-Okounkov-Pandharipande proved the smooth GW/DT correspondence in the toric case \cite{moop:gwdtc}, and Pixton-Pandharipande proved the correspondence for complete intersections in products of projective spaces \cite{pp:gwpcftq}.  In the orbifold setting, the authors have proved that Conjecture \ref{gwdt} is true for local cyclic orbifold lines \cite{z:gmvf,rz:ggmv}.

\subsection{Plan of the Paper}

In Section \ref{moduli} we set up the necessary notation to make precise the local GW and DT partition functions, ie. the framed orbifold vertex, and we state the local correspondence.  In Section \ref{sec:relations} we localize auxilary integrals into certain toric orbifold surfaces to develop relations which determine the two-leg GW orbifold vertex from an initial case.  We prove that the initial GW and DT two-leg vertices agree in the inital case in Sections \ref{sec:initial} and \ref{sec:comb}.  We prove that the local correspondence implies the global correspondence in Section \ref{sec:gluing}.

\subsection{Acknowledgements}

The authors are grateful to J. Bryan and B. Young for enlightening conversation.  They are particularly indebted to R. Cavalieri and C.-C. Liu for their expert advice. D. R. has been supported by NSF RTG grants DMS-0943832 and DMS-1045119.

\section{Definitions and Notations} \label{moduli}
In this section we set up notation which will be used throughout the paper and we give a precise statement of the main results.

\subsection{Partitions}
Our formulas are indexed by two types of combinatorial objects: ordinary partitions and $n$-partitions. Concretely, a partition $\tau$ is a sequence of positive integers $(\tau_1\geq\cdots\geq \tau_{l(\tau)})$.  We write $|\tau|:=\tau_1+\cdots+\tau_{l(\tau)}$ for the \emph{size} of $\tau$ and $l(\tau)$ for the \emph{length}. It is well known that partitions of size $d$ naturally index conjugacy classes and irreducible representations of the symmetric group $S_d$, see eg. \cite{m:sfhp}.  If $\tau$ and $\rho$ are partitions, we let $\chi_\rho(\tau)$ denote the value of the character of the irreducible representation $\rho$ on the conjugacy class $\tau$. Let $\tau'$ denote the transpose of $\tau$.  We also define
\begin{equation}\label{orderofcent1}
z_\tau:=|\Aut(\tau)|\prod \tau_i
\end{equation}
to be the order of the centralizer of any element in the conjugacy class $\tau$.

Fix a positive integer $n$ and a generator of the cyclic group
\[
\bZ_n=\left\langle\xi_n:=e^{\frac{2\pi\sqrt{-1}}{n}}\right\rangle.
\]
When no confusion arises, we write the generator simply as $\xi$. An $n$-partition is an ordered $n$-tuple of ordinary partitions:
$$
\mu=\left((\mu_1^0 \geq\cdots\geq  \mu_{l_0(\mu)}^0),...,(\mu_{1}^{n-1} \geq\cdots\geq  \mu_{l_{n-1}(\mu)}^{n-1})\right)
$$
with $\mu_j^i\in\bN$.  Let $\mu^i=(\mu_{1}^i,...,\mu_{l_i(\mu)}^i)$ denote the partition indexed by $i$ and let $\mu^{tw}$ correspond to the $n$-tuple of \emph{twisted} partitions $(\emptyset,\mu^1,...,\mu^{n-1})$.  Let $l(\mu):=\sum l_i(\mu)$ denote the \emph{length} of $\mu$ and $|\mu|:=\sum |\mu^i|$ the \emph{size}.  At times it will be convenient to write $\mu$ as a multiset $\{\xi^i\mu_{j}^i\}$ where the power of $\xi$ keeps track of which $\mu^i$ the $\mu_{j}^i$ came from.  Let $\underline\mu$ denote the underlying partition of $\mu$ that forgets the $\bZ_n$ decorations.  We define $-\mu:=\{\xi^{-i}\mu_j^i\}$, i.e. it is the $n$-partition with opposite twistings.

Similar to ordinary partitions, $n$-partitions of size $d$ naturally index conjugacy classes and irreducible representations of $\bZ_n\wr S_d$, see eg. \cite{m:sfhp}.  If $\mu$ and $\lambda$ are $n$-partitions, we let $\chi_{\lambda}(\mu)$ denote the value of the character of the irreducible representation $\lambda$ on the conjugacy class $\mu$. We also define
\begin{equation}\label{orderofcent2}
z_{\mu}:=|\Aut(\mu)|\prod n\mu_{j}^i
\end{equation}
to be the order of the centralizer of any element in the conjugacy class $\mu$.

Suppose $\lambda=(\lambda^0,...,\lambda^{n-1})$.  Via $n$-quotients (see \cite{rz:ggmv} Section 6) $\lambda$ can be identified with a partition of $n|\lambda|$.  We denote this corresponding partition by $\bar\lambda$ and we write $s_\lambda$ for the loop Schur function corresponding to $\bar\lambda$ (cf. Appendix \ref{sec:loop}).

\begin{notation}In order to distinguish our different combinatorial indices throughout the paper, we exclusively reserve $\tau$, $\eta$, $\rho$, and $\omega$ to denote ordinary partitions.  Moreover, $\tau$ and $\eta$ are used to index conjugacy classes of $S_d$ whereas $\rho$ and $\omega$ are used for representations.  Similarly, we reserve $\mu$, $\nu$, $\lambda$, and $\sigma$ for $n$-partitions.  We use $\mu$ and $\nu$ to index conjugacy classes of $\bZ_n\wr S_d$ while we save $\lambda$ and $\sigma$ for representations.
\end{notation}

\subsection{Gromov-Witten Theory: The Local Picture}
In this section, we recall the definition of the $A_n$ Gromov-Witten vertex and we define a slight modification which will be useful in our formulas.  Let us first recall some the relevant moduli spaces and tautological classes which appear in the vertex formulas.

Let $\gamma$ be a tuple of elements in $\bZ_n$ and let $m_i(\gamma)$ be the number of occurrences of $\xi^i\in\bZ_n$ in $\gamma$. Let $\Mbar_{g,\gamma}(\cB\bZ_n)$ denote the moduli stack of stable maps to the classifying space with $m_i(\gamma)$ marked points twisted by $\xi^i$.  By the definition of $\cB\bZ_n$, $\Mbar_{g,\gamma}(\cB\bZ_n)$ parametrizes degree $n$ covers of the source curve, ramified over the twisted points, with an action of $\bZ_n$ which exhibits the source curve as a quotient of the cover.  Let
$$
p:\mathcal{U}_h\rightarrow \Mbar_{g,\gamma}(\cB\bZ_n)
$$
be the universal covering curve of genus $h$ where $h$ is computed via the Riemann-Hurwitz formula.  The Hodge bundle on $\Mbar_{g,\gamma}(\cB\bZ_n)$ is the rank $h$ bundle defined by
$$
\mathbb{E}:=p_*\omega_h
$$
where $\omega_h$ is the relative dualizing sheaf of $p$.  $\bZ_n$ naturally acts on $\mathbb{E}$ and its dual $\mathbb{E}^\vee$.  For any $\zeta\in\bZ_n$, we define $\mathbb{E}_\zeta$ and $\mathbb{E}_\zeta^\vee$ to be the $\zeta$-eigenbundles of $\mathbb{E}$ and $\mathbb{E}^\vee$, respectively.  They are related by the formula $(\mathbb{E}_\zeta)^\vee=\mathbb{E}_{\zeta^{-1}}^\vee$.  We also have the formula
$$
\mathbb{E}_{\zeta^{-1}}^\vee=R^1\pi_*f^*\cO_{\zeta}
$$
where $\pi$ is the map from the universal source curve, $f$ is the universal map, and $\cO_{\zeta}$ is the line bundle with isotropy acting by multiplication by $\zeta$.  The lambda classes are defined as the chern classes of these bundles:
$$
\lambda_j^{\zeta}:=c_j\left(\mathbb{E}_\zeta\right)
$$

By forgetting the orbifold structure of the curve, there is a universal coarse curve
$$
q:\mathcal{U}_{g,|\gamma|}\rightarrow\Mbar_{g,\gamma}(\cB\bZ_n)
$$
along with a section $s_p$ for each marked point $p$.  We define the cotangent line bundles by
$$
\mathbb{L}_p:=s_p^*\omega_g
$$
where $\omega_g$ is the relative dualizing sheaf of $q$. The psi classes on $\Mbar_{g,\gamma}(\cB\bZ_n)$ are defined by
$$
\psi_p:=c_1\left(\mathbb{L}_p\right)
$$

Throughout the rest of the paper, we now restrict to the case where $\gamma$ consists only of \emph{nontrivial} elements in $\bZ_n$.  Let $\mu$ be an $n$-partition and let $\tau^+$ and $\tau^-$ be ordinary partitions and let $(\tau^+,\tau^-,\mu)$ denote the corresponding vector of $n+2$ partitions.  In order to write our formulas more concisely, we assign to each part $\kappa$ of $(\tau^+,\tau^-,\mu)$ a tuple $(l,m,k,d)\in\{1,2,3\}\times\{1,n\}\times\bZ/n\bZ\times\bN$ as follows:
\[
\Delta(\kappa)=\begin{cases}
(1,n,\tau_i^+ \text{ mod } n, \tau_i^+) &\text{ if }\kappa=\tau_i^+\in\tau^+\\
(2,n,-\tau_i^- \text{ mod }n, \tau_i^-) &\text{ if }\kappa=\tau_i^-\in\tau^-\\
(3,1,i,\mu_j^i) &\text{ if }\kappa=d_j^i\in\mu\\
\end{cases}
\]
We denote by $k(\tau^+,\tau^-,\mu)\in\bZ_n^{l(\tau^+)+l(\tau^-)+l(\mu)}$ the vector obtained by post composing $\Delta$ with the map $(l,m,k,d)\rightarrow\xi^k$ and we let $k_0(\tau^+,\tau^-,\mu)$ denote the number of ones in $k(\tau^+,\tau^-,\mu)$.  We will also use the notation $k_0^-(\tau^+)$, $k_0^+(\tau^-)$, and $k_0(\mu)$ for the obvious restrictions of $k_0$.

Let $w_1$, $w_2$, and $w_3$ be parameters satisfying $w_1+w_2+w_3=0$ and let $(r_1,r_2,r_3)=(1,-1,0)$ be the weights of the $\bZ_n$ representation giving the $3$-fold $A_{n-1}$ singularity.  By convention we define $w_4:=w_1$, $w_5:=w_2$, and similar for the $r_i$.  Following \cite{r:lgoa}, the particular Hodge integrals we are interested in take the form
\begin{align}
\nonumber V_{g,\gamma}&(\tau^+,\tau^-,\mu;w):=\frac{w_3^{l(\tau^+)+l(\tau^-)+l(\mu)-1 }(w_1w_2)^{k_0(\tau^+,\tau^-,\mu)}\prod_\kappa D(\Delta(\kappa))}{|\Aut(\tau^+)||\Aut(\tau^-)||\Aut(\mu)|}\\
&\cdot \int_{\Mbar_{g,\gamma+k(\tau^+,\tau^-,\mu)}(\cB\bZ_n)}\frac{\Lambda^{1}(w_1)\Lambda^{-1}(w_2)\Lambda^0(w_3)}{\delta(w)\prod_i\left(\frac{nw_1}{d_i^+}-\psi_i^+\right)\prod_i\left(\frac{nw_2}{d_i^-}-\psi_i^-\right)\prod_{i,j}\left(\frac{w_3}{d_j^i}-\psi_{i,j}\right)}
\end{align}
where $D$ is the (positively oriented) disk function
\[
D(l,m,k,d):=\left(\frac{d}{mw_l}\right)^{\delta_{0,k}}\frac{m}{d\left\lfloor\frac{d}{m}\right\rfloor!}\frac{\Gamma\left( \frac{dw_{l+1}}{mw_l}+\left\langle\frac{-kr_{l+2}}{n}\right\rangle+\frac{d}{m} \right)}{\Gamma\left( \frac{dw_{l+1}}{mw_l}-\left\langle\frac{-kr_{l+1}}{n}\right\rangle+1 \right)},
\]
the terms $\Lambda^i(t)$ package Hodge classes
\[
\Lambda^i(t):=(-1)^{\rk\left(\bE_{\xi^i}\right)}\sum_{j=0}^{\rk\left(\bE_{\xi^i}\right)}(-t)^{\rk\left(\bE_{\xi^i}\right)-j}\lambda_j^{\xi^i},
\]
 and $\delta(w)$ is the function which takes value $w_1w_2$ on the connected component parametrizing trivial covers of the source curve and takes value $1$ on all other components.

\begin{remark}
To obtain the negatively oriented disk functions, simply transpose the indices $i+1$ and $i+2$. Ultimately, changing the orientation merely introduces a sign and only becomes important in the orbifold vertex gluing algorithm.
\end{remark}

\begin{remark}
A bit of care is needed to define the exceptional cases where the moduli space in $V$ is undefined.  We adopt the following standard conventions.
\[
\int_{\Mbar_{0,(1)}(\cB\bZ_n)}\frac{1}{\delta(w)(a-\psi_1)}:=\frac{a}{w_1w_2n},
\]
\[
\int_{\Mbar_{0,(\xi^i,\xi^j)}(\cB\bZ_n)}\frac{1}{\delta(w)(a-\psi_1)}:=\begin{cases}
\frac{1}{n} & i+j=n\\
0 & \text{ else, }
\end{cases}
\]
and
\[
\int_{\Mbar_{0,(\xi^i,\xi^j)}(\cB\bZ_n)}\frac{1}{\delta(w)(a_1-\psi_1)(a_2-\psi_2)}:=\begin{cases}
\frac{1}{(w_1w_2)^{\delta_{0,i}}n(a_1+a_2)} & i+j = 0 \text{ mod } n\\
0 & \text{ else. }
\end{cases}
\]
Note that in the second case, we will never have $j=0$ due to the condition that $\gamma$ has only nontrivial elements of $\bZ_n$.
\end{remark}

\begin{remark}
It is not hard to check that $V_{g,\gamma}(\tau^+,\tau^-,\mu;w)$ is homogeneous of degree $0$ in the parameters $w_i$ and is therefore invariant under simultaneous rescaling.
\end{remark}

Introduce formal variables $u$ and $x_i$ to track genus and marks.  We define
\begin{equation}\label{def:vertex1}
V_{\tau^+,\tau^-,\mu}(x,u;w):=\sum_{g,\gamma}V_{g,\gamma}(\tau^+,\tau^-,\mu;w)u^{2g-2+l}\frac{x^\gamma}{\gamma!}
\end{equation}
where
\[
l:=l(\tau^+)+l(\tau^-)+l(\mu).
\]

Also introduce the variables $p_\tau^+$, $p_\tau^-$, $p_\mu$ with formal multiplication defined by concatenating indexing partitions whenever the superscripts agree.  We denote the disconnected vertex by
\begin{equation}\label{def:vertex2}
V_{\tau^+,\tau^-,\mu}^\bullet(x,u;w):=\exp\left( \sum_{\eta^+,\eta^-,\nu }V_{\eta^+,\eta^-,\nu }(x,u;w)p_{\eta^+}p_{\eta^-}^+ p_{\nu}^- \right)\left[ p_{\tau^+}^-p_{\tau^-}^+ p_{\mu}\right]
\end{equation}
where  $[-]$ denotes ``the coefficient of''.  By definition, $V_{\tau^+,\tau^-,\mu}^\bullet(x,u;w)$ is the GW $A_{n-1}$ vertex defined in \cite{r:lgoa}.  For our current purposes, it is more convenient to work with a slight modification.

\begin{definition}\label{framedvertex}
The \emph{framed GW $A_{n-1}$ vertex} is defined by
\begin{align}
\nonumber\tilde{V}_{\tau^+,\tau^-,\mu}^{\bullet,\alpha}(w):=&\alpha_+^{l(\tau^+)+|\tau^+|}\alpha_-^{l(\tau^-)+|\tau_-|}(-1)^{|\mu|+\sum_i\left(\left\lfloor-\frac{\tau^-_i}{n}\right\rfloor\right)}\\
&\cdot\sqrt{-1}^{-l(\tau^+)-l(\tau^-)}\prod_{i=0}^{n-1}(\sqrt{-1}\xi_{2n}^i)^{l_i(\mu)}V_{\tau^+,\tau^-,\mu}^\bullet(x,u;w)\label{framedv}
\end{align}
where $\alpha=(\alpha_+,\alpha_-)\in\{1,-1\}^2$ will be required for the gluing algorithm.  We similarly define the \emph{connected} series $\tilde V_{g,\gamma}^\alpha(\tau^+,\tau^-,\mu;w)$ and $\tilde V_{\tau^+,\tau^-,\mu}^\alpha(x,u;w)$ by multiplying by the same prefactor that appears in \eqref{framedv} so that equations \eqref{def:vertex1} and \eqref{def:vertex2} continue to hold with $V$ replaced by $\tilde{V}$.
\end{definition}

\begin{remark}\label{gwspec}
It follows from the definitions that $\tilde V^{\bullet}_{\emptyset,\emptyset,\mu}(a,-a-1,1) = \tilde V^\bullet_\mu(a)$ where the latter is defined in \cite{rz:ggmv}.
\end{remark}

\subsection{Donaldson-Thomas Theory: The Local Picture}

Let $\bq=(q_0,\dots,q_{n-1})$ be formal variables with indices computed modulo $n$.  Define the variables $\fq_i$ recursively by $\fq_0:=1$ and
\[
\fq_t:=q_t\fq_{t-1}
\]
so that $\{\dots,\fq_{-2},\fq_{-1},\fq_0,\fq_1,\fq_2,\dots\}=\{\dots,q_{-2}^{-1}q_{-1}^{-1},q_{-1}^{-1},1,q_1,q_1q_2,\dots\}$.

For a colored Young diagram $\bar\lambda$ corresponding to a $n$-partition $\lambda$ via $n$-quotients, denote the sizes of the rows in $\bar\lambda$ by $(\bar\lambda_1,\bar\lambda_2,\dots)$.  Define the variables $\fq_{\bullet-\lambda}$ by
\[
\fq_{\bullet-\lambda}:=\{\fq_{-\bar\lambda_1}, \fq_{1-\bar\lambda_2}, \fq_{2-\bar\lambda_3},\dots\}.
\]
In particular, we denote $\fq_{\bullet-\emptyset}=\fq_\bullet$.  An overline on an expression in the $q$ variables denotes interchanging $q_i\leftrightarrow q_{-i}$.  We use $\fq_{\bullet-\lambda'}$ to denote the variables obtained in this way from $\bar\lambda'$.

In \cite{bcy:otv}, an explicit combinatorial formula for the DT $A_n$ vertex is computed.  Motivated by their computations and Lemma \ref{hookcontent}, we define
\begin{align}\label{eq:divert}
P_{\rho^+,\rho^-,\lambda}:=s_{\lambda}(\bq)\sum_\omega q_0^{-|\omega|}\overline{s_{\rho^+/\omega}(\fq_{\bullet-\lambda})}s_{\rho^-/\omega}(\fq_{\bullet-\lambda'})
\end{align}
where $s_{\lambda}(\bq)$ denotes the loop Schur function of $\bar\lambda$ in the variables $(q_0,\dots,q_{n-1})$ and $s_{\rho/\omega}$ denotes a skew Schur function.

We modify $P$ to incorporate the framing.

\begin{definition}\label{def:dt}
The \emph{framed DT $A_n$ vertex} is defined by
\begin{align*}
\tilde P_{\rho^+,\rho^-,\lambda}(w):&=(-1)^{|\lambda|}q^{\frac{|\lambda|}{2}}\frac{\chi_{\bar\lambda}(n^{|\lambda|})}{\dim(\lambda)}\left( \left(-\xi_{2n}\right)^{-|\lambda|} \prod_k \xi_n^{-k|\lambda_k|} \right)^{\frac{nw_1}{w_3}}\\
&\cdot \left( -q^{\frac{1}{2}}q_1^{-\frac{1}{n}}\cdots q_{n-1}^{-\frac{n-1}{n}}\right)^{|\rho^+|}\left(-q^{\frac{1}{2}}q_1^{-\frac{n-1}{n}}\cdots q_{n-1}^{-\frac{1}{n}} \right)^{|\rho^-|}\\
&\cdot\left(\prod_{(i,j)\in\rho^+}q^{i-j} \right)^{\frac{w_3}{nw_1}}\left(\prod_{(i,j)\in\rho^-}q^{i-j} \right)^{\frac{w_3}{nw_2}}\left(\prod_{(i,j)\in\bar\lambda}q_{j-i}^{i-j} \right)^{\frac{w_1}{w_3}} P_{\rho^+,\rho^-,\lambda}
\end{align*}
and
\[
\tilde P^\alpha_{\rho^+,\rho^-,\lambda}(w)=\tilde P_{(\rho^+)^{\alpha_+},(\rho^-)^{\alpha_-},\lambda}(w)
\]
where $\rho^1:=\rho$ and $\rho^{-1}:=\rho'$.

\end{definition}

\begin{remark}
Notice that $\tilde P^\alpha_{\rho^+,\rho^-,\lambda}(w)$ can be defined for arbitrary parameters $w$ because we have chosen a branch of the logarithm in the first line.
\end{remark}

\begin{remark}\label{dtspec}
It follows from the definitions that $\tilde P_{\emptyset,\emptyset,\lambda}(a,-a-1,1)=\tilde P_\lambda(a)$ where the latter was defined in \cite{rz:ggmv}.
\end{remark}

\subsection{The Correspondence}

We claim that the framed vertex theories are equivalent after an identification of variables.

\begin{conjecture}\label{conjecture}
After the identification of variables
\[
q\leftrightarrow e^{\sqrt{-1}u}, \hspace{.5cm} q_k\leftrightarrow\xi_n^{-1}e^{-\sum_i\frac{\xi_n^{-ik}}{n}(\xi_{2n}^i-\xi_{2n}^{-i})x_i} \hspace{.5cm} (k>0),
\]
we have an identification of framed vertex theories:
\[
\tilde V_{\tau^+,\tau^-,\mu}^{\bullet,\alpha}(w)=\sum_{\rho^+,\rho^-,\lambda}\tilde P^\alpha_{\rho^+,\rho^-,\lambda}(w)\frac{\chi_{\rho^+}(\tau^+)}{z_{\tau^+}}\frac{\chi_{\rho^-}(\tau^-)}{z_{\tau^-}}\frac{\chi_\lambda(\mu)}{z_\mu}.
\]
\end{conjecture}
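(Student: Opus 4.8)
The plan is to bootstrap from the two-leg correspondence of Theorem \ref{thm:main} to the full three-leg identity by developing gluing relations that reconstruct a three-leg framed vertex from its \emph{cappings}, in which one of the three legs is glued to a one-leg vertex and thereby reduced to the already-proven two-leg case. Both the GW vertex $\tilde V^{\bullet,\alpha}_{\tau^+,\tau^-,\mu}$ and the DT vertex $\tilde P^\alpha_{\rho^+,\rho^-,\lambda}$ are organized, via the gluing algorithms of \cite{r:lgoa} and \cite{bcy:otv} respectively, into a trivalent-graph calculus indexed by the representation theory of $\bZ_n\wr S_d$. The conjectured correspondence is precisely the Fourier transform on $\bZ_n\wr S_d$ that intertwines the conjugacy-class basis, in which the GW vertex is naturally expressed through the factors $\chi_\lambda(\mu)/z_\mu$, with the irreducible-representation basis in which the DT vertex lives.

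First I would make the gluing structure on each side precise: an edge-gluing operation that identifies an $n$-partition leg $\mu$ of one vertex with the opposite leg $-\mu$ of another, weighted by an edge propagator determined by the normal geometry of the toric edge. I would then prove the central compatibility statement, namely that under the change of variables the GW and DT edge propagators agree and that the character transform is an algebra map for gluing. This step rests on the one-leg correspondence of \cite{rz:ggmv}, recorded in Remarks \ref{gwspec} and \ref{dtspec} and applied along a single edge; combined with the orthogonality of wreath-product characters, it should show that edge-gluing commutes with the transform on both sides.

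Next I would cap off the third leg. Concretely, one glues a one-leg vertex (a cap, with two of its three partitions trivial) onto the $\mu$-leg of $\tilde V^{\bullet,\alpha}_{\tau^+,\tau^-,\mu}$, ranging over all cap partitions and framings. Each such capping produces a two-leg vertex in the legs $\tau^+,\tau^-$, for which the correspondence holds by Theorem \ref{thm:main}. To generate these relations geometrically I would localize auxiliary invariants on toric Calabi-Yau $3$-orbifolds with transverse $A_{n-1}$ singularities, whose torus fixed loci contribute genuinely three-leg vertices glued along edges; expanding by virtual localization and the degeneration formula expresses a computable global invariant as a sum over gluings in which, apart from one target three-leg vertex, all factors are two-leg or one-leg data that are already known.

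The hard part will be the non-degeneracy of the capping pairing: showing that a three-leg framed vertex is uniquely recovered from the family of its two-leg cappings, equivalently that the capping-off map is injective as a linear map in the $\mu$-index. Unlike the two-leg situation, where the auxiliary toric surfaces supply enough localization relations to propagate everything back to a single initial vertex, the three-leg vertex is a genuinely trivalent object that is not assembled by gluing lower vertices, so one cannot simply bootstrap from the known cases without such a rigidity input. On the DT side the injectivity should follow from invertibility of the capping pairing expressed through the loop Schur and skew-Schur structure of $P_{\rho^+,\rho^-,\lambda}$, and on the GW side from the corresponding Hodge-integral gluing, but matching these and confirming that the resulting linear system has a unique solution equal to $\tilde P^\alpha_{\rho^+,\rho^-,\lambda}$ is the crux. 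An alternative route I would pursue in parallel is to establish Eynard-Orantin topological recursion (the remodeling conjecture) for the full orbifold vertex, extending \cite{rz:ggmv} from one leg to three; this would determine the all-genus GW vertex from the mirror curve of $[\bC^3/\bZ_n]$ and reduce the correspondence to matching the spectral-curve data against the closed DT formula of \cite{bcy:otv}.
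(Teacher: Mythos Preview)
The paper does \emph{not} prove Conjecture~\ref{conjecture}; it is stated and left open. What the paper establishes is the two-leg case (Theorem~\ref{thm:main}) and the implication from the local conjecture to the global correspondence (Theorem~\ref{thm:imply}). So there is no ``paper's own proof'' of this statement to compare against, and your proposal is an attempted proof of something the authors explicitly leave as a conjecture.

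As for the proposal itself, it is a program outline rather than a proof, and it contains a gap you yourself flag but do not close. The entire argument rests on the injectivity of the capping map: you glue a one-leg cap onto the $\mu$-leg and claim the resulting family of two-leg quantities determines the three-leg vertex uniquely. Nothing in the paper, in \cite{r:lgoa}, or in \cite{bcy:otv} supplies this rigidity, and the one-leg cap $\tilde P_{\emptyset,\emptyset,\lambda}$ is a scalar in each representation label, so capping amounts to multiplying by a diagonal matrix in the $\lambda$-basis; invertibility then requires that these scalars be nonzero for \emph{every} $\lambda$ and that enough independent framings are available to separate all $\lambda$, neither of which you verify. The auxiliary localization you sketch on toric CY $3$-orbifolds is also not set up: the two-leg argument in Section~\ref{sec:relations} works because the targets are \emph{surfaces} with a carefully chosen obstruction bundle whose zero weights kill all but a tripartite set of graphs (Lemma~\ref{lem:vanish}); no analogous target or vanishing is exhibited for a genuine trivalent contribution, and without it the graph sum will contain many unknown three-leg terms rather than isolate one. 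The alternative via topological recursion is likewise a conjectural route, not an argument. In short, the proposal identifies a plausible strategy but leaves its decisive step unproven; the paper does not claim otherwise and records the full three-leg identity as open.
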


\begin{remark}
By the main result of \cite{rz:ggmv} and Remarks \ref{gwspec} and \ref{dtspec}, Conjecture \ref{conjecture} is true if $\tau^+=\tau^-=\emptyset$.
\end{remark}

By analyzing the gluing algorithm for the orbifold vertex, we show the following.

\begin{theorem}\label{thm:imply}
Conjecture \ref{conjecture} implies the GW/DT correspondence for toric CY $3$-folds with transverse $A_n$ singularities.
\end{theorem}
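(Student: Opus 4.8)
The plan is to show that the global GW and DT partition functions, which a priori are defined by localization and Behrend-function-weighted Euler characteristics, can each be written as a single combinatorial sum over assignments of $n$-partitions to the edges of the toric web diagram of $\cX$, with each vertex of the web contributing a framed local vertex ($\tilde V$ on the GW side, $\tilde P$ on the DT side) and each compact edge contributing an explicit ``edge factor'' (a propagator recording the normal-bundle weights and the gluing of framings). This is exactly the structure already present in \cite{r:lgoa} on the GW side and in \cite{bcy:otv} on the DT side; the content of the theorem is that the two gluing formalisms match term-by-term once one identifies the indexing $n$-partitions via the character table of $\bZ_n \wr S_d$ and applies the change of variables of Conjecture \ref{conjecture}. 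So the first step is to recall the GW gluing formula: fix a CY torus action on $\cX$, enumerate the fixed loci of $\Mbar_{g,\gamma}(\cX,\beta)$ in terms of the combinatorics of the web (vertices, edges, and the orbifold structure along the singular lines), and organize the localization contribution into $\prod_{v} \tilde V^{\bullet,\alpha(v)}_{\tau^+(v),\tau^-(v),\mu(v)} \cdot \prod_e (\text{edge factor}_e)$, where the sums over ordinary partitions $\tau^\pm$ on the non-orbifold edges and over $n$-partitions $\mu$ on the orbifold edges are the gluing sums. The framing vectors $\alpha(v) \in \{1,-1\}^2$ and the precise normalization of the prefactor in Definition \ref{framedvertex} (the powers of $\sqrt{-1}$, $\xi_{2n}$, $\alpha_\pm$) are designed so that the edge factors take the clean hypergeometric/binomial form; I would verify that the product of two adjacent framing prefactors against the naive localization edge contribution reproduces precisely the stated edge factor, and symmetrically on the DT side.

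The second step is the parallel statement on the DT side: by \cite{bcy:otv}, $DT'(\cX;{\bf q},v)$ is a sum over the same web combinatorics, now with $n$-partitions (equivalently, via $n$-quotients, colored Young diagrams) attached to edges, each vertex contributing the loop-Schur vertex $P$ (framed to $\tilde P^\alpha$ as in Definition \ref{def:dt}) and each edge contributing a DT edge factor built from the $q$-variables and the normal weights. Here I would need to match the DT edge factor to the GW edge factor under the change of variables $q \leftrightarrow e^{\sqrt{-1}u}$, $q_k \leftrightarrow \xi_n^{-1} e^{-\sum_i \frac{\xi_n^{-ik}}{n}(\xi_{2n}^i - \xi_{2n}^{-i})x_i}$; this is a purely local computation comparing a ratio of Gamma functions (from $D(\Delta(\kappa))$ and the $\delta$-normalizations) with a ratio of $q$-shifted factorials, and is essentially the one-leg GW/DT edge comparison already established in the smooth case \cite{moop:gwdtc} and in the orbifold line case \cite{z:gmvf,rz:ggmv}, so I would cite those for the edge factors and focus the new work on the vertices. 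The key translation is that the GW gluing pairs vertices in the $p_\mu$ (conjugacy-class) basis while the DT gluing pairs them in the loop-Schur (representation) basis, and Conjecture \ref{conjecture} is precisely the change of basis $\tilde V^{\bullet,\alpha}_{\tau^+,\tau^-,\mu} = \sum \tilde P^\alpha_{\rho^+,\rho^-,\lambda} \frac{\chi_{\rho^+}(\tau^+)}{z_{\tau^+}}\frac{\chi_{\rho^-}(\tau^-)}{z_{\tau^-}}\frac{\chi_\lambda(\mu)}{z_\mu}$ relating the two.

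The third step is the bookkeeping that the basis change is compatible with the edge gluing. On the GW side an internal edge glues two vertices by summing over a common partition $\mu$ (or $\tau$) with weight $z_\mu^{-1}$ times an explicit edge factor $E(\mu)$ depending only on $\underline\mu$ and the edge weights; on the DT side the same edge sums over a representation $\lambda$ with an edge factor $\hat E(\lambda)$. Assuming Conjecture \ref{conjecture} at every vertex, substituting the character-sum expression for each $\tilde V$ and using the column orthogonality relation $\sum_\mu \frac{\chi_\lambda(\mu)\overline{\chi_{\lambda'}(\mu)}}{z_\mu} = \delta_{\lambda,\lambda'}$ for $\bZ_n \wr S_d$ collapses the conjugacy-class sum on each internal edge to a single representation, converting the GW gluing into the DT gluing provided the edge factors satisfy $E(\mu) = \sum_{\lambda} \hat E(\lambda)$-style compatibility — concretely, that $E$ is diagonalized by the characters with eigenvalue $\hat E$. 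Since $E(\mu)$ depends on $\mu$ only through an $\Aut$-symmetric monomial in the parts (a product over parts of the edge, as visible from the structure of $\tilde V$'s normal-bundle factors), this diagonalization is the standard fact that such ``multiplicative'' class functions are eigenfunctions of the wreath-product characters; I would check the eigenvalue is exactly the DT edge factor under the change of variables, again reducing to the known one-leg comparison. The external legs require no gluing and match directly by Conjecture \ref{conjecture} with $\alpha$ determined by the framing of the outgoing edge.

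The main obstacle I anticipate is not any single hard estimate but the careful matching of \emph{framings and signs} across the gluing: the prefactors in Definitions \ref{framedvertex} and \ref{def:dt} (powers of $\sqrt{-1}$, $\xi_{2n}$, $(-1)^{|\mu|}$, the $\alpha_\pm$ orientations, and the $q^{|\lambda|/2}$ and $\chi_{\bar\lambda}(n^{|\lambda|})/\dim(\lambda)$ factors on the DT side) must conspire so that every internal edge sees the product of two prefactors equal to the corresponding edge propagator, with no leftover global sign or power of $q$. Getting the orientation conventions ($w_4 := w_1$, the transposition of indices for negatively oriented disks, and the role of $\alpha$) to cancel consistently around each vertex and along each edge is delicate, and is where I would spend the bulk of the argument; the representation-theoretic collapse via orthogonality and the hypergeometric edge comparison are, by contrast, routine given the cited results.
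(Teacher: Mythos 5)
Your proposal follows essentially the same route as the paper's Section on gluing: start from the GW gluing formula of \cite{r:lgoa} written in terms of framed vertices and edge factors, substitute the vertex identity of Conjecture \ref{conjecture}, collapse the conjugacy-class sums on internal edges via orthogonality of $\bZ_n\wr S_d$ characters (together with the twisting identity for relabeled monodromy at the far end of a gerby edge), and match the result against the DT gluing formula of \cite{bcy:otv} after the change of variables. Your identification of the sign/framing bookkeeping as the delicate point is also where the paper's actual work lies, so the proposal is correct and not materially different in approach.
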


The main result of this paper is the two-leg correspondence.

\begin{theorem}\label{thm:main}
Conjecture \ref{conjecture} is true if one of $\tau^+$, $\tau^-$, or $\mu$ is empty.  As a result, the GW/DT correspondence holds for local toric surfaces when the total space has transverse $A_n$ singularities.
\end{theorem}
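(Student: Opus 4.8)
We prove Conjecture \ref{conjecture} under the empty-leg hypothesis by a bootstrap and then deduce the geometric statement. There are three cases according to which leg vanishes: $\mu=\emptyset$, $\tau^-=\emptyset$, or $\tau^+=\emptyset$; the last two are interchanged by the disk-orientation symmetry of the framed vertices (transposing $w_1\leftrightarrow w_2$ on the GW side, together with $\rho\mapsto\rho'$ and a swap of the two effective legs on the DT side, as encoded by the parameter $\alpha$), so it suffices to treat $\mu=\emptyset$ and $\tau^-=\emptyset$. The doubly-degenerate case $\tau^+=\tau^-=\emptyset$ is already known: by Remarks \ref{gwspec} and \ref{dtspec} it is exactly the main theorem of \cite{rz:ggmv}, and this serves as the base of the induction. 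The plan is to build, in this order, $\tilde V^{\bullet,\alpha}_{\tau^+,\emptyset,\mu}$ from $\tilde V^\bullet_{\emptyset,\emptyset,\mu}$ by a recursion that appends parts to $\tau^+$, and then $\tilde V^{\bullet,\alpha}_{\tau^+,\tau^-,\emptyset}$ from the special case $\tilde V^{\bullet,\alpha}_{\tau^+,\emptyset,\emptyset}$ by a recursion that appends parts to $\tau^-$; on the DT side we verify the mirror chain of identities for the loop Schur expression.

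\textbf{The GW relations (Section \ref{sec:relations}).} We introduce a family of auxiliary equivariant integrals over moduli of stable maps to toric orbifold surfaces --- total spaces of line bundles over orbifold $\bP^1$'s (footballs) and over $A_{n-1}$ orbifold lines, carrying enough twisted marked points --- and apply $\bC^*$-localization with a Calabi--Yau torus action. Each fixed locus is a product of moduli of stable maps to $\cB\bZ_n$ glued along edges, so its contribution is a product of connected framed vertices $\tilde V^\alpha$ with explicit edge and node factors. Matching the localization expansion against an independent evaluation of the auxiliary integrals (via polynomiality in the equivariant parameters and the relative/rubber calculus) yields recursions of the advertised type: a two-leg vertex with one extra part on $\tau^+$ (resp. $\tau^-$) is expressed through vertices of strictly smaller total size $|\tau^+|+|\tau^-|+|\mu|$, closing against the one-leg base case. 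The crucial point is a \emph{solvability} claim: run with respect to the size filtration, these relations determine $\tilde V^{\bullet,\alpha}_{\tau^+,\tau^-,\mu}$ uniquely once the one-leg vertices and one explicit initial two-leg vertex are known, the latter being computed by hand in Section \ref{sec:initial}.

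\textbf{The DT verification (Section \ref{sec:comb}).} One must show that $\sum_{\rho^+,\rho^-,\lambda}\tilde P^\alpha_{\rho^+,\rho^-,\lambda}(w)\,\frac{\chi_{\rho^+}(\tau^+)}{z_{\tau^+}}\frac{\chi_{\rho^-}(\tau^-)}{z_{\tau^-}}\frac{\chi_\lambda(\mu)}{z_\mu}$ satisfies the same recursions and reproduces the same initial two-leg vertex. Passing from the representation index $(\rho^+,\rho^-,\lambda)$ to the conjugacy-class index $(\tau^+,\tau^-,\mu)$ is a Fourier transform on $\bZ_n\wr S_d$, and under it the loop Schur and skew loop Schur functions in $P_{\rho^+,\rho^-,\lambda}$ (cf. \eqref{eq:divert} and Lemma \ref{hookcontent}) are governed by Pieri- and branching-type identities for the associated colored Young diagrams via the $n$-quotient; the GW append-a-part recursions translate into precisely these identities, modulo bookkeeping of the framing prefactors of Definition \ref{def:dt} --- the powers of $q$, $q_k$, $\xi_n$, $\xi_{2n}$, and the ratio $\chi_{\bar\lambda}(n^{|\lambda|})/\dim(\lambda)$ --- under the change of variables $q\leftrightarrow e^{\sqrt{-1}u}$, $q_k\leftrightarrow\xi_n^{-1}e^{-\sum_i\frac{\xi_n^{-ik}}{n}(\xi_{2n}^i-\xi_{2n}^{-i})x_i}$. \textbf{This combinatorial verification is the main obstacle}: the recursions are geometrically natural on the GW side, but after the character-theoretic transform they become nonobvious skew loop Schur function identities, and arranging them so that the induction actually closes (including a careful reconciliation of the prefactors and of the $\alpha$-dependence) is where the real work lies.

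\textbf{Conclusion.} Granting the two sides obey the same recursions and initial data, induction on $|\tau^+|+|\tau^-|+|\mu|$ proves Conjecture \ref{conjecture} whenever one of $\tau^+$, $\tau^-$, $\mu$ is empty. For the geometric corollary, let $\cX$ be a local toric surface, the total space of $K_S$ for a toric surface $S$ with transverse $A_n$ singularities along torus-invariant curves. At each torus-fixed point the three coordinate directions are the two toric directions of $S$ --- one possibly an orbifold line, contributing the $\mu$-leg --- together with the non-compact fiber direction of $K_S$, which is never a compact edge and hence carries the empty partition in the gluing. Thus at every vertex one of the three legs is empty, so the local GW/DT correspondence just established applies; Theorem \ref{thm:imply} then upgrades it to the global GW/DT correspondence for $\cX$, completing the proof.
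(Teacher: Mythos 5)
Your outline has the right ingredients in circulation (localization on auxiliary orbifold surfaces, the one-leg base case from \cite{rz:ggmv}, a combinatorial verification on the DT side, the symmetry reducing $\tau^+=\emptyset$ to $\tau^-=\emptyset$, and the correct explanation of why local toric surfaces always have an empty leg at each vertex), but the load-bearing step is misidentified and unjustified. The localization of the auxiliary integrals on $X_s$ and $X_a$ does \emph{not} produce ``append-a-part'' recursions that lower $|\tau^+|+|\tau^-|+|\mu|$. What it produces, by equating the graph sums for arbitrary weights $w$ with those at a special weight ($w_s=(\tfrac1n,-\tfrac1n,0)$, resp.\ $w_a=(\tfrac1n,-1-\tfrac1n,1)$), are \emph{framing relations} such as \eqref{frame:s1} and \eqref{frame:a1}: convolutions of the two-leg vertex with double (wreath) Hurwitz generating functions $\tilde H^\bullet$, which are inverted via orthogonality of characters and, in the representation basis, become exactly the framing prefactors of Definition \ref{def:dt} (Lemmas \ref{frame:s2}--\ref{frame:a3}). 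These relations change $w$, not the partition data, and your ``solvability claim'' --- that the localization relations determine the vertex by induction on total size --- is asserted without any argument; as stated it does not follow from the geometry.

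What actually closes the argument at the special framings is different in the two cases, and neither matches your proposed induction. In the symmetric case ($\mu=\emptyset$) there is no recursion at all: setting $w_3=0$ kills all connected terms except $l(\tau^\pm)=1$ contributions, which are evaluated in closed form via the Faber--Pandharipande $\lambda_g$ identity and matched to the DT side by Cauchy and skew-Schur manipulations. In the asymmetric case ($\tau^-=\emptyset$) the reduction is the identity \eqref{conjsym}, which moves a part of size $d$ from $\tau$ over to $\mu$ --- preserving $|\tau|+|\mu|$ and decreasing $l(\tau)$ --- and which holds only at the framing $w_a$ and follows directly from the form of the disk functions there, not from localization; its DT counterpart is the genuinely nontrivial loop Schur identity (Theorem \ref{thm:comb}), proved by determinantal expansions. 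So the true inductive parameter is $l(\tau)$ at fixed framing $w_a$, the framing reduction is a separate prior step, and your proposal as written leaves the central recursion unestablished.
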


\subsection{Symmetry}

The generating functions $\tilde V$ and $\tilde P$ exhibit many symmetries which follow easily from the definitions.  In particular, if we let $\bar x$ denote the exchange of variables $x_i\leftrightarrow x_{n-i}$, $\bar\alpha=(\alpha_-,\alpha_+)$, and $\bar w=(w_2,w_1,w_3)$, then we have
\begin{equation}\label{gwsym2}
\tilde V^{\bullet,\alpha}_{\tau^+,\tau^-,\mu}(x,u;w)=(-1)^{|\mu|+l(\mu)}\prod_{i}\xi_n^{il_i(\mu)} \tilde V^{\bullet,\bar\alpha}_{\tau^-,\tau^+,-\mu}(\bar x,u;\bar w) .
\end{equation}
On the DT side, we have
\begin{equation}\label{dtsym}
\tilde P^\alpha_{\rho^+,\rho^-,\lambda}(q;w)= \tilde P^{\bar\alpha}_{\rho^-,\rho^+,\lambda'}(\bar q, \bar w)
\end{equation}

Moreover, $\chi_{\lambda'}(-\mu)=(-1)^{|\mu|+l(\mu)}\prod_{i}\xi_n^{-il_i(\mu)}\chi_\lambda(\mu)$ and the identification of variables is compatible with $x\rightarrow \bar x$, $q\rightarrow \bar q$.  This implies that conjecture \ref{conjecture} is invariant under simultaneously exchanging $\tau^+\leftrightarrow\tau^-$, $\rho^+\leftrightarrow\rho^-$, $\mu\rightarrow-\mu$,  $\alpha\rightarrow \bar\alpha$, $w\rightarrow\bar w$, $x\rightarrow \bar x$, and $q\rightarrow\bar q$.  In particular, this implies that the conjecture holds for $\tau^+=\emptyset$ if and only if it holds for $\tau^-=\emptyset$.

\section{Relations via Localization}\label{sec:relations}

In this section, we define and study auxiliary integrals on moduli spaces of relative stable maps into certain orbifold surfaces.  Computing the integrals via localization, we produce relations which completely determine the two-leg GW $A_{n-1}$ vertex from an initial value which we compute in Sections \ref{sec:initial} and \ref{sec:comb}.

\subsection{Targets}

\subsubsection{Symmetric Case}

Let $\Sigma_s$ be the two dimensional fan generated by the following five rays: $v_1=(n+1,-n),v_2=(-1,1),v_3=(0,1),v_4=(1,-1),v_5=(0,-1)$ and let $X_s$ be the toric stack defined by $\Sigma_s$. Let $D_i$ be the torus invariant divisor corresponding to $v_i$. Then $X_s$ has a unique singular point at $z_0=D_1\cap D_3$ with isotropy group $\bZ_n$.  We fix an isomorphism $\bZ_n\cong\left\langle \xi_n \right\rangle$ by requiring that the induced representation of $TD_1|_{z_0}$ is multiplication by $\xi_n^{-1}$.   See Figure \ref{fig:s} for the fan and polygon corresponding to $X_s$.

\begin{figure}
\begin{center}
\psfrag{D1}{$D_1$}
\psfrag{D2}{$D_2$}
\psfrag{D3}{$D_3$}
\psfrag{D4}{$D_4$}
\psfrag{D5}{$D_5$}
\psfrag{z0}{$z_0$}
\psfrag{z+}{$z_+$}
\psfrag{z_}{$z_-$}
\psfrag{tz+}{$\tilde{z}_+$}
\psfrag{tz_}{$\tilde{z}_-$}
\psfrag{w1}{$w_1$}
\psfrag{w2}{$w_2$}
\psfrag{-nw1}{$-nw_1$}
\psfrag{-w3}{$-w_3$}
\psfrag{-nw2}{$-nw_2$}
\psfrag{nw1}{$nw_1$}
\psfrag{w3}{$w_3$}
\psfrag{v1}{$v_1$}
\psfrag{v2}{$v_2$}
\psfrag{v3}{$v_3$}
\psfrag{v4}{$v_4$}
\psfrag{v5}{$v_5$}
\includegraphics{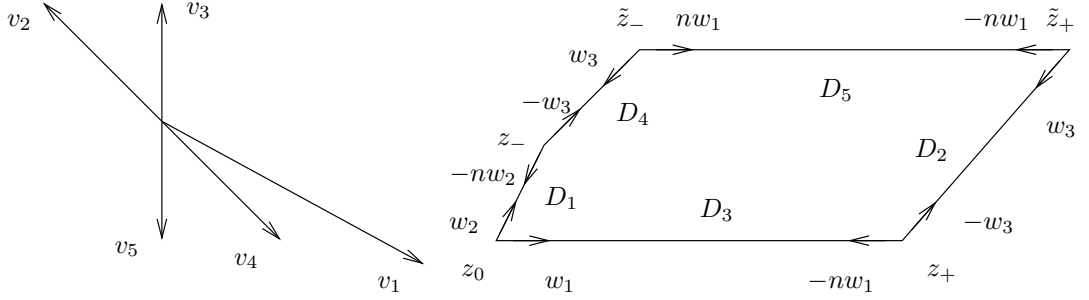}
\caption{The fan and the polytope of the coarse moduli space of $X_a$}\label{fig:s}
\end{center}
\end{figure}

\subsubsection{Asymmetric Case}

Let $\Sigma_a$ be the two dimensional fan generated by the following five rays: $v_1=(1,0),v_2=(-1,0),v_3=(0,1),v_4=(0,-1),v_5=(-1,-1)$ and let $\tilde X_a$ be the corresponding toric variety ($\tilde X_a$ is the blowup of $\bP^1\times\bP^1$ at a point).  Let $D_i$ be the torus invariant divisor corresponding to $v_i$ and let $X_a$ be obtained from $\tilde X_a$ by root construction of order $n$ along the divisor $D_3\subset X$.  The root construction replaces $D_3$ with a trivial $\bZ_n$ gerbe over $D_3$ and we identify $\bZ_n\cong\left\langle \xi_n \right\rangle$  by requiring that the induced representation on the normal bundle $\cN_{D_3/X_a}$ is multiplication by $\xi_n$.  We abuse notation and refer to the orbifold divisor also as $D_3$.  See Figure \ref{fig:a} for the fan and polygon corresponding to $X_a$.

\begin{figure}
\begin{center}
\psfrag{D1}{$D_1$}
\psfrag{D2}{$D_2$}
\psfrag{D3}{$D_3$}
\psfrag{D4}{$D_4$}
\psfrag{D5}{$D_5$}
\psfrag{z0}{$z_0$}
\psfrag{z+}{$z_+$}
\psfrag{z_}{$z_-$}
\psfrag{tz+}{$\tilde{z}_+$}
\psfrag{tz_}{$\tilde{z}_-$}
\psfrag{w1}{$w_1$}
\psfrag{w3}{$w_3$}
\psfrag{-w3}{$-w_3$}
\psfrag{-nw1}{$-nw_1$}
\psfrag{w3-nw1}{$w_3-nw_1$}
\psfrag{nw1-w3}{$nw_1-w_3$}
\psfrag{v1}{$v_1$}
\psfrag{v2}{$v_2$}
\psfrag{v3}{$v_3$}
\psfrag{v4}{$v_4$}
\psfrag{v5}{$v_5$}
\includegraphics[scale=0.8]{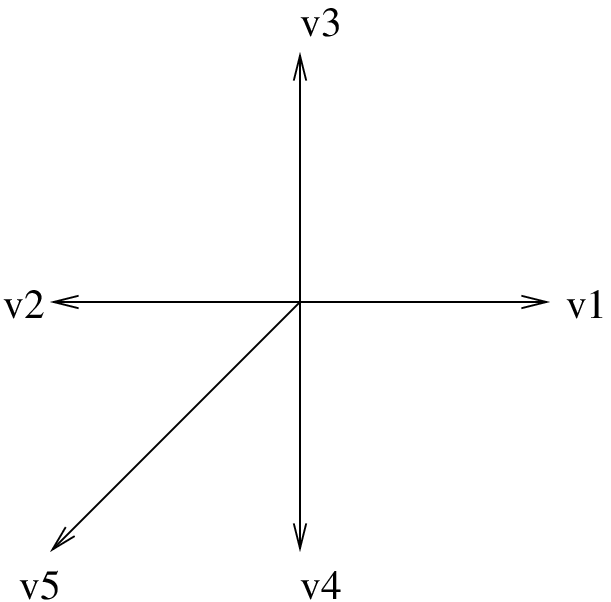}
\includegraphics[scale=0.8]{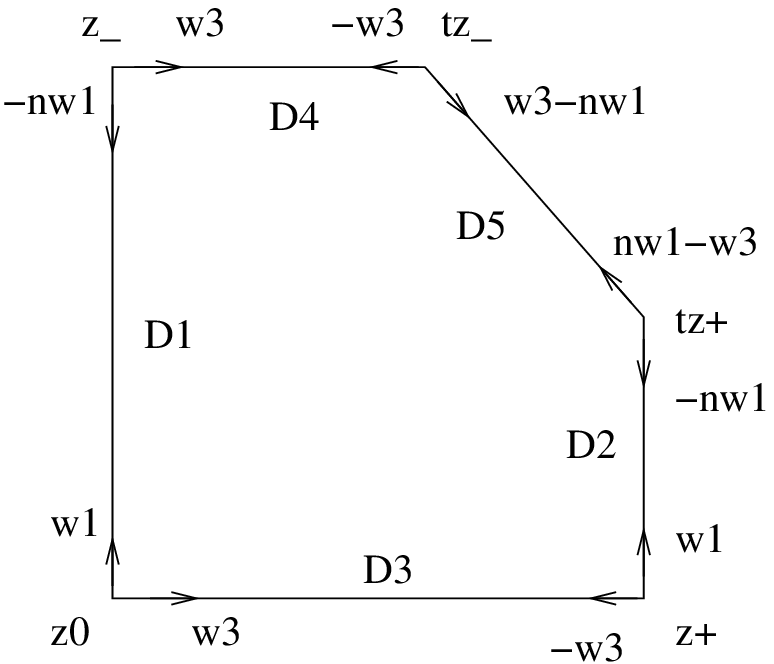}
\end{center}
\caption{The fan and the polytope of $\tilde{X}_a$}\label{fig:a}
\end{figure}

\subsection{Auxiliary Integrals}

\subsubsection{Symmetric Case}

Let $\Mbar^\bullet_{\chi,\gamma}(X_s,\tau^+,\tau^-)$ denote the moduli space of (possibly disconnected) relative stable maps into $X_s$ with relative conditions given by $\tau^+$ along $D_2$ and $\tau^-$ along $D_4$.  Let $\pi:\cU\rightarrow\Mbar^\bullet_{\chi,\gamma}(X_s,\tau^+,\tau^-)$ be the universal curve, $f:\cU\rightarrow\cT$ the map to the universal target, and $\hat f:\cU\rightarrow X_s$ the map which postcomposes $f$ with the natural contraction.  Let $\cD_+, \cD_-\subset\cU$ be the divisors corresponding to the relative marked points on the source curve.  In the symmetric case, the integrals we investigate are
\begin{equation}\tag{$I_s$}\label{eqn:ints}
\frac{1}{|\Aut(\tau^+)||\Aut(\tau^-)|}\int_{\Mbar^\bullet_{\chi,\gamma}(X_s,\tau^+,\tau^-)}e\left( V_s \right)
\end{equation}
where $V_s$ is the obstruction bundle
\[
V_s:=R^1\pi_* \left(\hat f^*\cO_{X_s}(-D_5)\otimes\cO_{\cU}(-\cD_+-\cD_-) \right).
\]

\subsubsection{Asymmetric Case}

Let $\Mbar^\bullet_{\chi,\gamma}(X_s,\tau,\mu)$ denote the moduli space of relative stable maps into $X_a$ with relative conditions given by $\tau$ along $D_2$ and $\mu$ along $D_4$.  We define the obstruction bundle $V_a$ in the asymmetric case exactly as in the symmetric case, except we define $\cD_+\subset \cU$ to be the divisor corresponding to only the relative points in $\mu$ with \emph{trivial} isotropy and we replace the twisting divisor $-D_5$ with $-D_1-D_3$.  The integrals we investigate in the asymmetric case are
\begin{equation}\tag{$I_a$}\label{eqn:inta}
\frac{1}{|\Aut(\tau)||\Aut(\mu)|}\int_{\Mbar^\bullet_{\chi,\gamma}(X_s,\tau,\mu)}e\left( V_a \right)
\end{equation}

\subsection{Torus Action}

In order to apply the localization formula to our integrals, we equip the moduli spaces and the integrands with a $\bC^*$ action.  The actions on the moduli spaces are defined by postcomposing the map with a $\bC^*$ action on the target.  A compatible action on the integrand is obtained through a lift of the $\bC^*$ action to $\cO(-D_5)$ in the symmetric case and $\cO(-D_1-D_3)$ in the asymmetric case.  These actions are determined by the weights at fixed points of the target.  For any choice of $w_1,w_2\in\frac{1}{n}\bZ$ and $w_3\in\bZ$ with $w_1+w_2+w_3=0$, we obtain a $\bC^*$ action with weights at the fixed points collected in the following tables (see also Figures \ref{fig:s} and \ref{fig:a}).

\subsubsection{Symmetric Case}

\[
\begin{array}{ccc}
      &        T_{X_s}            & \cO_{X_s}(-D_5) \\
  z_0 &   w_1,w_2       & w_3 \\
  z_+ & -nw_1,-w_3         &   w_3     \\
  z_- &   -nw_2, -w_3      &   w_3      \\
\tz_+ & w_3, -nw_1  &     0           \\
\tz_- & w_3, nw_1 &     0
\end{array}
\]

\subsubsection{Asymmetric Case}

\[
\begin{array}{ccc}
      &        T_{X_a}            & \cO_{X_a}(-D_1-D_3) \\
  z_0 &  w_1,w_3        &  w_2 \\
  z_+ &  -w_3,w_1         &   -w_1        \\
  z_- &   w_3, -nw_1      &   -w_3       \\
\tz_+ & -nw_1, nw_1-w_3 &     0           \\
\tz_- & -w_3, -nw_1+w_3 &     0
\end{array}
\]

\subsection{Localization Graphs}

The localization formula reduces the auxiliary integrals \eqref{eqn:ints} and \eqref{eqn:inta} to integrals over the $\bC^*$ fixed loci of the moduli space.  The fixed loci can be indexed by (possibly disconnected) graphs and the integral can be represented as a localization graph sum, see \cite{l:ligwtaogwt} for the basics of localization in orbifold GW theory.  In both the symmetric and asymmetric cases, we have the following vanishing result on many of the contributions which significantly simplifies the graph sum.

\begin{lemma}\label{lem:vanish}
Let $\beta_i$ denote the curve class represented by $D_i$, then the auxiliary integrals \eqref{eqn:ints} and \eqref{eqn:inta} are zero for all curve classes $\sum
a_i\beta_i$ unless $a_2=a_4=a_5=0$.
\end{lemma}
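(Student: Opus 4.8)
The plan is to analyze the localization graphs for the auxiliary integrals \eqref{eqn:ints} and \eqref{eqn:inta} and show that the obstruction bundle $V_s$ (resp. $V_a$) has a trivial subbundle with nonzero equivariant weight — hence a vanishing equivariant Euler class — on every fixed locus whose associated graph carries a nonzero coefficient of $\beta_2$, $\beta_4$, or $\beta_5$. The key geometric input is the structure of the targets: the divisors $D_2$, $D_4$, $D_5$ are precisely the ``non-compact'' or ``relative/exceptional'' boundary directions (in the symmetric case $D_5$ is the divisor twisting the obstruction bundle, and $D_2$, $D_4$ carry the relative conditions; in the asymmetric case $D_2$, $D_4$ are again relative and $D_5$ is one of the divisors along which the blowup sits), so a stable map representing a class with $a_2>0$, $a_4>0$, or $a_5>0$ must have a component mapping with positive degree onto one of these rational curves $D_2$, $D_4$, or $D_5$.

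The steps I would carry out, in order: (1) Recall the localization graph formalism for orbifold relative stable maps into $X_s$ and $X_a$ following \cite{l:ligwtaogwt}, recording for each edge the restriction of $\hat f^*\cO_{X_s}(-D_5)\otimes\cO_{\cU}(-\cD_+-\cD_-)$ (resp. the asymmetric analogue) to the corresponding $\bP^1$, using the weight tables above. (2) For an edge $e$ mapping with positive degree onto $D_2$ (an exterior edge in the $z_+$–$\tilde z_\pm$ or $z_-$–$\tilde z_\pm$ direction, depending on the picture), compute the weight of $H^0$ or $H^1$ of the pulled-back line bundle restricted to that $\bP^1$; since $\cO_{X_s}(-D_5)$ restricts trivially at the $\tilde z_\pm$ fixed points (the last two rows of the tables record weight $0$ there) and has a definite weight $w_3$ at $z_\pm$, the bundle $\hat f^*\cO(-D_5)$ restricted to such an edge is $\cO_{\bP^1}$ with a nonzero equivariant weight, contributing a factor to $e(V_s)$ which vanishes. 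The twist by $\cO_\cU(-\cD_+-\cD_-)$ only affects components meeting relative markings and does not spoil this. Repeat the bookkeeping for $D_4$ and $D_5$, and separately for the asymmetric target where $\cO_{X_a}(-D_1-D_3)$ plays the role of the twist (again its restriction to the $\tilde z_\pm$ fixed points is $0$). (3) Conclude that every graph with $a_2\neq 0$, $a_4\neq 0$, or $a_5\neq 0$ contributes $0$ to the localization sum, which gives the lemma. One should also dispatch the degenerate possibility that such a class is represented only by contracted components carrying the relative/boundary insertions — but the relative conditions $\tau^\pm$ force genuine positive-degree contact with $D_2$ and $D_4$, and the divisor $D_5$ (resp. $D_1$) is rigid enough that no positive multiple of $\beta_5$ arises from a contracted configuration; this can be checked directly from the fan.

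The main obstacle I anticipate is step (2): carefully matching the combinatorics of the orbifold localization graphs — in particular the contributions of edges incident to the orbifold point $z_0$ with $\bZ_n$ isotropy, and the effect of the twist $\cO_\cU(-\cD_+-\cD_-)$ along edges terminating at relative markings — with the claimed vanishing, without accidentally cancelling the nonzero weight. A clean way to organize this is to argue that $V_s$ (resp. $V_a$) always contains, as an equivariant subbundle on the relevant fixed loci, the pushforward of a trivial bundle supported on the non-contracted part mapping to $D_2\cup D_4\cup D_5$ (resp. $D_2\cup D_4\cup D_1\cup D_3$), on which the $\bC^*$-weight is a nonzero multiple of $w_3$ or $w_1$; then $e(V_s)$ has this weight as a factor, and it is nonzero under the genericity hypothesis $w_1+w_2+w_3=0$ with the $w_i$ otherwise generic, so the contribution vanishes. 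I would also note the homogeneity remark already recorded for $V$ to justify working with generic $w_i$.
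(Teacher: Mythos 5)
Your geometric setup is the right one, and it matches the paper's (very brief) proof: the paper simply cites the analogy with Lemma 7.2 of \cite{llz:ftphi} and records the key point that the $0$ weights of the fibers of the twisting bundle over $\tz_+$ and $\tz_-$ annihilate the contribution of any fixed locus carrying a rational curve mapped onto $D_2$, $D_4$, or $D_5$. You correctly identify $\tz_\pm$ and the last two rows of the weight tables as the crucial data.

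However, the vanishing mechanism as you state it is logically inverted, and as written the argument is a non-sequitur. You assert three times (in the opening plan, in step (2), and in the closing paragraph) that $e(V_s)$ vanishes because it contains a factor equal to a \emph{nonzero} weight: ``has a trivial subbundle with nonzero equivariant weight --- hence a vanishing equivariant Euler class,'' and later ``the $\bC^*$-weight is a nonzero multiple of $w_3$ or $w_1$; then $e(V_s)$ has this weight as a factor, and it is nonzero \dots\ so the contribution vanishes.'' A trivial line bundle on a fixed locus with equivariant weight $w$ has equivariant Euler class $w$; if $w\neq 0$ this is an invertible factor and kills nothing. The contribution dies precisely because the relevant weight is \emph{zero}: for an edge mapped onto $D_5$ the pullback of $\cO_{X_s}(-D_5)$ is the trivial bundle with weight $0$ at both $\tz_+$ and $\tz_-$, and it is this weight-$0$ class (entering $e(V_s)$ through the fibers at the nodes over $\tz_\pm$ in the normalization sequence for $R^1\pi_*$) that forces the product of weights to vanish; the same weight-$0$ fiber at $\tz_\pm$ handles edges over $D_2$ and $D_4$. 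Relatedly, your claim in step (2) that the restriction to an edge over $D_2$ is ``$\cO_{\bP^1}$ with a nonzero equivariant weight'' is inconsistent with your own reading of the table: an equivariantly trivial line bundle on $\bP^1$ has equal weights at the two fixed points, whereas the table gives weight $w_3$ at $z_+$ and $0$ at $\tz_+$, so that restriction is not equivariantly trivial. Fixing the argument requires only flipping the logic --- isolate the weight-$0$ piece, not a nonzero one --- but as submitted the central step would not go through.
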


\begin{proof}
The proof is analogous to the proof of Lemma 7.2 in \cite{llz:ftphi}.  The key point is that the $0$ weights of the fibers over $\tz_+$ and $\tz_-$ annihilate the contribution of the integral over any fixed locus which maps a rational curve to $D_5$.
\end{proof}

Due to the vanishing of Lemma \ref{lem:vanish}, the graphs contributing to the auxiliary integrals take on a simple form.  In particular, the fixed loci can be indexed by a tripartite graph as follows.

\subsubsection{Symmetric Case}

\begin{itemize}
\item Vertices correspond to connected components of $\hat f^{-1}(z_0)$, $\hat f^{-1}(z_-)$, and, $\hat f^{-1}(z_+)$, we group these into three sets $V_0$, $V_-$, and $V_+$, respectively.  Each vertex $v$ is labeled with a nonnegative integer $g_v$ to denote the genus of the connected component.
\item Edges correspond to rational curves connecting the contracted components, we group these into two sets (by Lemma \ref{lem:vanish}) $E_{0,-}$ and $E_{0,+}$.  Each edge $e$ is labeled with a positive integer $d_e$ which denotes the degree of the corresponding map between rational curves.  This labeling induces a partition $\eta_v$ for each $v\in V_-\cup V_+$ and it induces a pair of partitions $(\eta_v^-,\eta_v^+)$ for $v\in V_0$.
\item Each vertex $v\in V_-\cup V_+$ is labeled with an additional partition $\tau_v$ such that $|\tau_v|=|\eta_v|$ and $\bigcup_{v\in V_{\pm}}\tau_v=\tau^{\pm}$.
\end{itemize}

\subsubsection{Asymmetric Case}

In the asymmetric case the graphs have a few extra decorations arising from the orbifold structure on $D_3$.
\begin{itemize}
\item Each vertex $v\in V_+\cup V_0$ is labeled with a tuple $\gamma_v$ of nontrivial elements in $\bZ_n$ corresponding to the orbifold structure on the contracted component.
\item Instead of labeling each edge $e\in E_{0,+}$ with an integer, we label it with a complex number of the form $\xi^{k_e}d_e$.  This induces $n$-partitions $\nu_v$ for each $v\in V_0$ and $-\nu_v$ for $v\in V_+$.
\item Instead of labeling each $v\in V_+$ with ordinary partitions we label them with $n$-partitions $\mu_v$ such that $|\mu_v|=|\nu_v|$ and $\bigcup_{v\in V_+}\mu_v=\mu$.
\end{itemize}

\subsection{Graph Contributions}

We now collect the localization contribution from each vertex and write down the auxiliary integrals explicitly as graph sums.  For more details on the computation of the vertex contributions see \cite{z:gmvf} and \cite{rz:ggmv}.

\subsubsection{Symmetric Case}

To a vertex $v\in V_0$ we assign the contribution
\[
\Cont(v):=(-1)^{\sum_i\left\lfloor -\frac{(\eta_v^-)_i}{n} \right\rfloor}V_{g_v,\gamma_v}(\eta_v^+,\eta_v^-,\emptyset;w).
\]
To a vertex $v\in V_+$ we assign the contribution
\begin{align*}
\Cont(v):=&\frac{(-1)^{g_v-1+l(\tau_v)}}{|\Aut(\eta_v)|}\left( \frac{w_3}{nw_1} \right)^{2g_v-2+l(\eta_v)+l(\tau_v)}\\
&\cdot\prod (\eta_v)_i\int_{\Mbar_{g_v}(\bP^1;\eta_v,\tau_v)//\bC^*}\psi_0^{2g_v-3+l(\eta_v)+l(\tau_v)}.
\end{align*}
To a vertex $v\in V_-$ we assign the contribution
\begin{align*}
\Cont(v):=&\frac{(-1)^{g_v-1+l(\tau_v)}}{|\Aut(\eta_v)|}\left( \frac{w_3}{nw_2} \right)^{2g_v-2+l(\eta_v)+l(\tau_v)}\\
&\cdot\prod(\eta_v)_i\int_{\Mbar_{g_v}(\bP^1;\eta_v,\tau_v)//\bC^*}\psi_0^{2g_v-3+l(\eta_v)+l(\tau_v)}.
\end{align*}

By the localization formula, we can write the integral \eqref{eqn:ints} as a graph sum:
\begin{equation}\label{localize:s1}
\eqref{eqn:ints}=\frac{1}{|\Aut(\tau^+)||\Aut(\tau^-)|}\sum_\Gamma\frac{1}{|\Aut(\Gamma)|}\prod_v \Cont(v).
\end{equation}

In particular, if we set $w_s:=(\frac{1}{n},-\frac{1}{n},0)$, the contributions from vertices in $V_+$ and $V_-$ vanish and the integral evaluates to
\begin{equation}\label{localize:s2}
\eqref{eqn:ints}=(-1)^{\sum_i\left(\left\lfloor-\frac{(\tau_v^-)_i}{n}\right\rfloor\right)}V_{g,\gamma}(\tau^+,\tau^-,\emptyset;w_s)
\end{equation}

Defining rubber integral generating functions as in Appendix \ref{sec:rubber}, \eqref{localize:s1} and \eqref{localize:s2} together imply the following relation between the arbitrary symmetric two leg vertex and the one with specific weights $w_s$.
\begin{equation}\label{frame:s1}
\tilde V^{\bullet,\alpha}_{\tau^+,\tau^-,\emptyset}(w_s)=\sum_{\eta^+,\eta^-}\tilde V^{\bullet,\alpha}_{\eta^+,\eta^-,\emptyset}(w)z_{\eta^+}\tilde{H}^\bullet_{\eta^+,\tau^+}\left(\alpha_+\frac{w_3}{nw_1}\right)z_{\eta^-}\tilde{H}^\bullet_{\eta^-,\tau^-}\left(\alpha_-\frac{w_3}{nw_2}\right).
\end{equation}

\subsubsection{Asymmetric Case}

To a vertex $v\in V_0$ we assign the contribution
\[
\Cont(v):=(-1)^{|\nu|+l_0(\nu)}V_{g_v,\gamma_v}(\eta_v,\emptyset,\nu_v;w).
\]
To a vertex $v\in V_+$ we assign the contribution
\begin{align*}
\Cont(v):=&\frac{(-1)^{l_0(\nu_v)+g_v-1+\sum_{i\neq 0}\frac{n-i}{n}(m_i(\gamma_v)+l_i(\mu_v)+l_{n-i}(\nu_v))}}{|\Aut(\nu_v)|}\left(-\frac{w_1}{w_3}\right)^{2g_v-2+|\gamma_v|+l(\mu_v)+l(\nu_v)}\\
&\hspace{1cm}\cdot\left( \prod_{i}n\nu_i \right)\int_{\Mbar_{g_v,\gamma_v}(\bP^1\times\cB\bZ_n;-\nu_v,\mu_v)//\bC^*}\psi_0^{2g_v-3+|\gamma_v|+l(\nu_v)+l(\mu_v)}.
\end{align*}
To a vertex $v\in V_-$ we assign the contribution
\begin{align*}
\Cont(v):=&\frac{(-1)^{g_v-1+l(\tau_v)}}{|\Aut(\eta_v)|}\left( \frac{w_3}{nw_1} \right)^{2g_v-2+l(\eta_v)+l(\tau_v)}\\
&\cdot\prod(\eta_v)_i\int_{\Mbar_{g_v}(\bP^1;\eta_v,\tau_v)//\bC^*}\psi_0^{2g_v-3+l(\eta_v)+l(\tau_v)}.
\end{align*}

By the localization formula, we can write the integral \eqref{eqn:inta} as a graph sum:
\begin{equation}\label{localize:a1}
\eqref{eqn:inta}=\frac{1}{|\Aut(\mu)||\Aut(\tau)|}\sum_\Gamma\frac{1}{|\Aut(\Gamma)|}\prod_v \Cont(v).
\end{equation}

As our initial condition in the asymmetric case, we take $w_a:=(\frac{1}{n},-1-\frac{1}{n},1)$.  Then \eqref{localize:a1} implies the following relation between the arbitrary vertex and our initial condition.
\begin{equation}\label{frame:a1}
\sum_{\eta,\nu}\tilde V^{\bullet,\alpha}_{\eta,\emptyset,\nu}(w_a)z_{\eta}\tilde{H}^\bullet_{\eta,\tau}\left(\alpha_+\right)z_{\nu}\tilde{H}^\bullet_{\nu,\mu}\left(\frac{1}{n}\right)=\sum_{\eta,\nu}\tilde V^{\bullet,\alpha}_{\eta,\emptyset,\nu}(w)z_{\eta}\tilde{H}^\bullet_{\eta,\tau}\left(\alpha_+\frac{w_3}{nw_1}\right)z_{\nu}\tilde{H}^\bullet_{\nu,\mu}\left(\frac{w_1}{w_3}\right).
\end{equation}

\subsection{Framing Dependence}

We now compute explicitly the dependence of the GW $A_n$ vertex on the weights $w$.

\subsubsection{Symmetric Case}

Using the formulas from Appendix \ref{sec:rubber}, we can invert the formula \eqref{frame:s1} to obtain the following.

\begin{lemma}\label{frame:s2}
The framing dependence of the symmetric two leg vertex in the conjugacy class basis is
\[
\tilde V^{\bullet,\alpha}_{\tau^+,\tau^-,\emptyset}(w)=\sum_{\eta^+,\eta^-}\tilde V^{\bullet,\alpha}_{\eta^+,\eta^-,\emptyset}(w_s)z_{\eta^+}\tilde{H}^\bullet_{\eta^+,\tau^+}\left(-\alpha_+\frac{w_3}{nw_1}\right)z_{\eta^-}\tilde{H}^\bullet_{\eta^-,\tau^-}\left(-\alpha_-\frac{w_3}{nw_2}\right).
\]
\end{lemma}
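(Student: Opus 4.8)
The plan is to invert the relation \eqref{frame:s1} purely formally, using the rubber calculus of Appendix \ref{sec:rubber}. Package the disconnected rubber generating functions into a single operator: for a parameter $a$, let $\mathbf{M}(a)$ be the linear map on the completed vector space with basis indexed by the ordinary partitions, whose matrix entry from $\tau$ to $\eta$ is $z_\eta\,\tilde H^\bullet_{\eta,\tau}(a)$. Since the two-leg vertex $\tilde V^{\bullet,\alpha}_{\tau^+,\tau^-,\emptyset}$ is indexed by the pair $(\tau^+,\tau^-)$, equation \eqref{frame:s1} reads
\[
\tilde V^{\bullet,\alpha}_{\tau^+,\tau^-,\emptyset}(w_s)
=\left(\mathbf{M}\!\left(\alpha_+\tfrac{w_3}{nw_1}\right)\otimes\mathbf{M}\!\left(\alpha_-\tfrac{w_3}{nw_2}\right)\right)\tilde V^{\bullet,\alpha}_{\bullet,\bullet,\emptyset}(w),
\]
where the first factor acts in the $\tau^+$ slot and the second in the $\tau^-$ slot; since they act on disjoint indices they commute and the tensor product is unambiguous.

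The essential input, which I would take from Appendix \ref{sec:rubber}, is the one-parameter group law
\[
\mathbf{M}(a)\,\mathbf{M}(b)=\mathbf{M}(a+b),\qquad \mathbf{M}(0)=\mathrm{id}.
\]
Geometrically this is the degeneration/gluing formula for rubber maps: breaking a rubber $\bP^1$ into two rubber $\bP^1$'s inserts a $\psi$-class at the new node which carries the additive framing parameter, and summing over smoothings of the node supplies the $z_\eta$ weight, while $\mathbf{M}(0)=\mathrm{id}$ records that only the trivial configuration survives at parameter $0$ (consistent with setting $w=w_s$ in \eqref{frame:s1} itself). From the group law, $\mathbf{M}(a)$ is invertible with $\mathbf{M}(a)^{-1}=\mathbf{M}(-a)$, directly from $\mathbf{M}(a)\mathbf{M}(-a)=\mathbf{M}(0)=\mathrm{id}$. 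Applying $\mathbf{M}\!\left(-\alpha_+\tfrac{w_3}{nw_1}\right)\otimes\mathbf{M}\!\left(-\alpha_-\tfrac{w_3}{nw_2}\right)$ to both sides of the displayed identity and using the group law twice gives exactly the statement of the lemma; the substitution $a\mapsto -a$ absorbs the sign $\alpha_\pm\in\{1,-1\}$ automatically.

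I expect the only real work to be establishing the group law with the precise normalizations above — tracking the $z_\eta$ factors, the orientation convention of the disk functions (the Remark following the definition of $D$), and the framing signs $\alpha_\pm$, so that the parameter entering $\mathbf{M}$ is exactly $\alpha_\pm w_3/(nw_i)$ and flips sign cleanly under inversion. This is bookkeeping rather than a geometric obstacle, and it is carried out in Appendix \ref{sec:rubber}; granting it, the lemma follows from the short operator manipulation above.
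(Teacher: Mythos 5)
Your proposal is correct and is essentially the paper's argument: the paper proves Lemma \ref{frame:s2} by inverting \eqref{frame:s1} using precisely the two identities from Appendix \ref{sec:rubber}, namely the convolution law $H^\bullet_{\nu,\mu}(x+y,u+v)=\sum_\sigma H^\bullet_{\nu,\sigma}(x,u)\,z_\sigma\, H^\bullet_{-\sigma,\mu}(y,v)$ and the initial condition $H^\bullet_{\nu,-\mu}(0,0)=\delta_{\nu,\mu}/z_\mu$, which are exactly your group law $\mathbf{M}(a)\mathbf{M}(b)=\mathbf{M}(a+b)$, $\mathbf{M}(0)=\mathrm{id}$ (here the $\eta^\pm$ are ordinary partitions, so $-\sigma=\sigma$ and no sign bookkeeping is needed). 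The only cosmetic difference is that the paper derives these identities from the Burnside character formula \eqref{hurgen} and orthogonality of characters rather than from a degeneration heuristic, but that is precisely the content of the appendix you invoke.
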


If we define
\[
\hat P^\alpha_{\rho^+,\rho^-,\emptyset}(w):=\sum_{\tau^+,\tau^-}\tilde V^{\bullet,\alpha}_{\tau^+,\tau^-,\emptyset}(w)\chi_{\rho^+}(\tau^+)\chi_{\rho^-}(\tau^-)
\]
or equivalently
\[
V^{\bullet,\alpha}_{\tau^+,\tau^-,\emptyset}(w)=:\sum_{\rho^+,\rho^-}\hat P^\alpha_{\rho^+,\rho^-,\emptyset}(w)\frac{\chi_{\rho^+}(\tau^+)}{z_{\tau^+}}\frac{\chi_{\rho^-}(\tau^-)}{z_{\tau^-}},
\]
then Lemma \ref{frame:s2} is equivalent to the following.
\begin{lemma}\label{frame:s3}
The framing dependence of the symmetric two leg vertex in the representation basis is
\[
\hat P^\alpha_{\rho^+,\rho^-,\emptyset}(w)=e^{-\left(\alpha_+f_T(\rho^+)\sqrt{-1}u\frac{w_3}{nw_1}+\alpha_-f_T(\rho^-)\sqrt{-1}u\frac{w_3}{nw_2}\right)}\hat P^\alpha_{\rho^+,\rho^-,\emptyset}(w_s)
\]
\end{lemma}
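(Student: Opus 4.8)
The plan is to translate Lemma \ref{frame:s2} from the conjugacy-class basis to the representation basis via the character orthogonality relations, and then to recognize the resulting operator as an exponential of an explicit cut-and-join–type operator. First I would recall (from Appendix \ref{sec:rubber}) the explicit form of the rubber generating function $\tilde H^\bullet_{\eta,\tau}(b)$; the key structural fact is that after summing against irreducible characters of $S_d$ it becomes diagonal in the representation basis, with eigenvalue $e^{-b f_T(\rho)\sqrt{-1}u}$ on the representation $\rho$, where $f_T(\rho)$ is the content sum $\sum_{(i,j)\in\rho}(i-j)$. This is the standard fact that the disconnected double Hurwitz/rubber numbers are diagonalized by Schur functions, and it is exactly the eigenvalue appearing in the statement.

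Concretely, I would start from
\[
\hat P^\alpha_{\rho^+,\rho^-,\emptyset}(w)=\sum_{\tau^+,\tau^-}\tilde V^{\bullet,\alpha}_{\tau^+,\tau^-,\emptyset}(w)\chi_{\rho^+}(\tau^+)\chi_{\rho^-}(\tau^-),
\]
substitute the expression for $\tilde V^{\bullet,\alpha}_{\tau^+,\tau^-,\emptyset}(w)$ from Lemma \ref{frame:s2}, and interchange the sums so that the inner sums over $\tau^\pm$ (with the factors $z_{\eta^\pm}\tilde H^\bullet_{\eta^\pm,\tau^\pm}(\cdots)$ and $\chi_{\rho^\pm}(\tau^\pm)$) can be evaluated. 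Using $\sum_\tau z_\eta \tilde H^\bullet_{\eta,\tau}(b)\chi_\rho(\tau) = e^{-b f_T(\rho)\sqrt{-1}u}\,\delta$-type contraction against $\chi_\rho(\eta)$, the two sums collapse and one is left with
\[
\hat P^\alpha_{\rho^+,\rho^-,\emptyset}(w)
= e^{-\alpha_+ f_T(\rho^+)\sqrt{-1}u\frac{w_3}{nw_1}}\,
e^{-\alpha_- f_T(\rho^-)\sqrt{-1}u\frac{w_3}{nw_2}}
\sum_{\eta^+,\eta^-}\tilde V^{\bullet,\alpha}_{\eta^+,\eta^-,\emptyset}(w_s)\chi_{\rho^+}(\eta^+)\chi_{\rho^-}(\eta^-),
\]
and the remaining sum is precisely $\hat P^\alpha_{\rho^+,\rho^-,\emptyset}(w_s)$ by definition. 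One should track the sign conventions carefully: Lemma \ref{frame:s2} carries $-\alpha_\pm \frac{w_3}{nw_i}$ as the argument of $\tilde H^\bullet$, and the minus sign in that argument is what produces the $e^{-(\cdots)}$ (rather than $e^{+(\cdots)}$) in the final formula.

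The main obstacle is justifying the diagonalization step — i.e.\ verifying that the rubber series $\tilde H^\bullet_{\eta,\tau}(b)$, after the character transform, is genuinely diagonal with the content-sum eigenvalue, and that the normalization constants $z_\eta$ and the $\Aut$-factors hidden in the definitions of $\tilde V^\bullet$ versus $V^\bullet$ are bookkept consistently. This reduces to the well-known description of the operator $\sum_\eta z_\eta \tilde H^\bullet_{\eta,\tau}(b) p_\eta \otimes p_\tau^\vee$ as $\exp(b\sqrt{-1}u\,\mathcal{C})$ where $\mathcal{C}$ is the (completed) cut-and-join operator, whose eigenvalue on $s_\rho$ is the content sum $f_T(\rho)$; once that identification is in hand, the rest is a formal manipulation of symmetric functions using $\sum_\tau \chi_\rho(\tau)\chi_{\rho'}(\tau)/z_\tau = \delta_{\rho\rho'}$. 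I would either cite the relevant statement from Appendix \ref{sec:rubber} directly or include a one-line reduction to it.
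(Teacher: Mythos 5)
Your proposal is correct and is essentially the paper's intended argument: Lemma \ref{frame:s3} is obtained from Lemma \ref{frame:s2} by summing against $\chi_{\rho^+}(\tau^+)\chi_{\rho^-}(\tau^-)$ and using the closed form \eqref{hurgen} of $\tilde H^\bullet_{\eta,\tau}$ together with orthogonality of characters, which collapses the sums over $\tau^\pm$ and $\eta^\pm$ and leaves exactly the eigenvalue $e^{-\alpha_\pm f_T(\rho^\pm)\sqrt{-1}u\frac{w_3}{nw_\pm}}$. The ``diagonalization'' step you flag as the main obstacle is already supplied by the Burnside-formula computation in Appendix \ref{sec:rubber}, so no separate appeal to the cut-and-join operator is needed.
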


After the prescribed change of variables (applying Lemma \ref{lem:central}), the prefactor in the representation basis becomes
\[
\left(\prod_{(i,j)\in\rho^+}q^{i-j} \right)^{\alpha_+\frac{w_3}{nw_1}}\left(\prod_{(i,j)\in\rho^-}q^{i-j} \right)^{\alpha_-\frac{w_3}{nw_2}}=\left(\prod_{(i,j)\in(\rho^+)^{\alpha_+}}q^{i-j} \right)^{\frac{w_3}{nw_1}}\left(\prod_{(i,j)\in(\rho^-)^{\alpha_-}}q^{i-j} \right)^{\frac{w_3}{nw_2}}.
\]
Comparing this to the framing dependence in Definition \ref{def:dt}, we see that in order to prove the symmetric correspondence it is enough to prove it for the case $w=w_s$.

\subsubsection{Asymmetric Case}

Using the formulas from Appendix \ref{sec:rubber}, we can rewrite the formula \eqref{frame:a1} in the following way.

\begin{lemma}\label{frame:a2}
The framing dependence of the asymmetric two leg vertex in the conjugacy class basis is
\[
\tilde V^{\bullet,\alpha}_{\tau,\emptyset,\mu}(w)=\sum_{\eta,\nu}\tilde V^{\bullet,\alpha}_{\eta,\emptyset,\nu}(w_a)z_\eta \tilde{H}^\bullet_{\eta,\tau}\left(\alpha_+\left(1-\frac{w_3}{nw_1}\right)\right)z_\nu \tilde{H}^\bullet_{\nu,\mu}\left(\frac{1}{n}-\frac{w_1}{w_3}\right)
\]
\end{lemma}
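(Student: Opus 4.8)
The plan is to treat \eqref{frame:a1} as an identity of convolution operators and invert the operators attached to the arbitrary-weight vertex. The essential input is the behavior of the rubber generating functions $\tilde H^\bullet$ developed in Appendix \ref{sec:rubber}. Concretely, I will use that these functions define a one-parameter group under $z$-weighted convolution in the conjugacy-class index: for formal parameters $s_1,s_2$ they satisfy the composition law
\[
\sum_\nu \tilde H^\bullet_{\eta,\nu}(s_1)\, z_\nu\, \tilde H^\bullet_{\nu,\tau}(s_2)=\tilde H^\bullet_{\eta,\tau}(s_1+s_2),
\]
together with the normalization $z_\eta\,\tilde H^\bullet_{\eta,\tau}(0)=\delta_{\eta,\tau}$ which makes the parameter-zero operator the identity. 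These are exactly the rubber-calculus identities; the additivity in the framing parameter reflects the degeneration and gluing of rubber targets, and the same two properties already underlie the inversion of \eqref{frame:s1} recorded in Lemma \ref{frame:s2}.

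Granting this, the argument is purely formal. Writing $s_1:=\alpha_+\tfrac{w_3}{nw_1}$ and $s_2:=\tfrac{w_1}{w_3}$, equation \eqref{frame:a1} asserts that the bare $w_a$-vertex convolved with $\tilde H^\bullet(\alpha_+)$ in the $\tau$-slot and $\tilde H^\bullet(\tfrac1n)$ in the $\mu$-slot equals the bare $w$-vertex convolved with $\tilde H^\bullet(s_1)$ in the $\tau$-slot and $\tilde H^\bullet(s_2)$ in the $\mu$-slot. I convolve both sides of \eqref{frame:a1} with $\tilde H^\bullet(-s_1)$ in the $\tau$-index and with $\tilde H^\bullet(-s_2)$ in the $\mu$-index. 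On the right-hand side the composition law collapses $\tilde H^\bullet(s_1)$ against $\tilde H^\bullet(-s_1)$ and $\tilde H^\bullet(s_2)$ against $\tilde H^\bullet(-s_2)$ to the identity operator, leaving precisely the bare vertex $\tilde V^{\bullet,\alpha}_{\tau,\emptyset,\mu}(w)$.

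On the left-hand side the same composition law merges the framings additively: in the $\tau$-slot $\tilde H^\bullet(\alpha_+)$ followed by $\tilde H^\bullet(-s_1)$ becomes $\tilde H^\bullet\big(\alpha_+-s_1\big)=\tilde H^\bullet\big(\alpha_+(1-\tfrac{w_3}{nw_1})\big)$, and in the $\mu$-slot $\tilde H^\bullet(\tfrac1n)$ followed by $\tilde H^\bullet(-s_2)$ becomes $\tilde H^\bullet\big(\tfrac1n-\tfrac{w_1}{w_3}\big)$; the two convolutions act on independent indices and hence commute. Equating the two sides yields the claimed formula. The only real content is therefore the composition law and the invertibility of the $\tilde H^\bullet(s)$ operators, which I expect to be the main obstacle and which must be extracted from the rubber localization in Appendix \ref{sec:rubber}; once that group structure is in hand the lemma is an immediate bookkeeping of framing parameters, exactly parallel to the passage from \eqref{frame:s1} to Lemma \ref{frame:s2} in the symmetric case.
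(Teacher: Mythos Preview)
Your proposal is correct and follows exactly the paper's approach: the paper's entire proof is the single line ``Using the formulas from Appendix~\ref{sec:rubber}, we can rewrite the formula \eqref{frame:a1},'' and you have made explicit the one-parameter group structure of the rubber operators $\tilde H^\bullet(a)$ under $z$-weighted convolution that carries out this rewriting, precisely parallel to the passage from \eqref{frame:s1} to Lemma~\ref{frame:s2}. One small caveat: the composition identity recorded in Appendix~\ref{sec:rubber} carries a sign flip on the intermediate $n$-partition (it reads $\sum_\sigma H^\bullet_{\nu,\sigma}\,z_\sigma\,H^\bullet_{-\sigma,\mu}=H^\bullet_{\nu,\mu}$), so your displayed formula is literally correct only on the ordinary-partition leg where $-\tau=\tau$; on the gerby $\mu$-leg the same bookkeeping goes through with the $-\sigma$ in place, a point the paper's own notation treats loosely.
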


In particular, if we define
\[
\hat P^\alpha_{\rho,\emptyset,\lambda}(w):=\sum_{\mu,\tau}\tilde V^{\bullet,\alpha}_{\tau,\emptyset,\mu}(w)\chi_{\lambda}(\mu)\chi_{\rho}(\tau)
\]
or equivalently
\[
V^{\bullet,\alpha}_{\tau,\emptyset,\mu}(w)=:\sum_{\lambda,\rho}\hat P^\alpha_{\rho,\emptyset,\lambda}(w)\frac{\chi_{\lambda}(\mu)}{z_{\mu}}\frac{\chi_{\rho}(\tau)}{z_{\tau}},
\]
then Lemma \ref{frame:a2} is equivalent to the following.

\begin{lemma}\label{frame:a3}
The framing dependence of the asymmetric two leg vertex in the representation basis
\[
\hat P^\alpha_{\rho,\emptyset,\lambda}(w)=e^{\alpha_-f_T(\rho)\sqrt{-1}u\left(1-\frac{w_3}{nw_1}\right)+\left(f_T(\lambda)\sqrt{-1}u+\sum_i f_i(\lambda)x_i\right)\left(\frac{1}{n}-\frac{w_1}{w_3}\right)}\hat P^\alpha_{\rho,\emptyset,\lambda}(w_a)
\]
\end{lemma}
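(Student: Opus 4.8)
The plan is to recognize Lemma \ref{frame:a3} as simply the image of Lemma \ref{frame:a2} under the character transform into the representation basis, and to deduce it by the same mechanism that proves Lemma \ref{frame:s3} in the symmetric case. Concretely, I would multiply the identity of Lemma \ref{frame:a2} by $\chi_\rho(\tau)\chi_\lambda(\mu)$, sum over the conjugacy classes $\tau$ of $S_{|\tau|}$ and $\mu$ of $\bZ_n\wr S_{|\mu|}$, and interchange the order of summation so that the $\eta$- and $\nu$-sums sit on the outside:
\[
\hat P^\alpha_{\rho,\emptyset,\lambda}(w)=\sum_{\eta,\nu}\tilde V^{\bullet,\alpha}_{\eta,\emptyset,\nu}(w_a)\left(z_\eta\sum_\tau\chi_\rho(\tau)\tilde{H}^\bullet_{\eta,\tau}(c_1)\right)\left(z_\nu\sum_\mu\chi_\lambda(\mu)\tilde{H}^\bullet_{\nu,\mu}(c_2)\right),
\]
where $c_1=\alpha_+(1-w_3/(nw_1))$ and $c_2=1/n-w_1/w_3$ are the arguments appearing in Lemma \ref{frame:a2}.

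The key input is the \emph{rubber diagonalization}: the disconnected rubber generating functions become scalar-multiplication operators after passing to the representation basis. For ordinary partitions this is the statement that $z_\eta\sum_\tau\chi_\rho(\tau)\tilde{H}^\bullet_{\eta,\tau}(c)=e^{c\sqrt{-1}u\,f_T(\rho)}\chi_\rho(\eta)$, i.e. the degree-zero rubber theory of $\bP^1$ read off in the representation basis; this is exactly the identity underlying the proof of Lemma \ref{frame:s3}, established in Appendix \ref{sec:rubber} via the semisimple operator formalism, with $f_T(\rho)$ the content sum of $\rho$ as in Lemma \ref{lem:central}. The orbifold refinement needed here is the analogous statement for the $\bP^1\times\cB\bZ_n$ rubber, $z_\nu\sum_\mu\chi_\lambda(\mu)\tilde{H}^\bullet_{\nu,\mu}(c)=e^{c\left(\sqrt{-1}u\,f_T(\lambda)+\sum_i f_i(\lambda)x_i\right)}\chi_\lambda(\nu)$; the extra term $\sum_i f_i(\lambda)x_i$ in the eigenvalue is produced by the $\bZ_n$-twisted markings carried by the orbifold rubber, and is again extracted from the operator formalism of Appendix \ref{sec:rubber} together with Lemma \ref{lem:central}.

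Granting these two identities, the proof finishes formally: the two exponential eigenvalues do not depend on $\eta$ or $\nu$, so they factor out of the double sum, and the remaining sum $\sum_{\eta,\nu}\tilde V^{\bullet,\alpha}_{\eta,\emptyset,\nu}(w_a)\chi_\rho(\eta)\chi_\lambda(\nu)$ is by definition $\hat P^\alpha_{\rho,\emptyset,\lambda}(w_a)$. Combining with the values $c_1$ and $c_2$ from Lemma \ref{frame:a2} produces the exponential prefactor displayed in Lemma \ref{frame:a3}. (If one then also wants the comparison with Definition \ref{def:dt}, one rewrites $f_T$ and $f_i$ as the content functionals $\sum_{(i,j)}(i-j)$ and uses Lemma \ref{lem:central} to pass $\sqrt{-1}u$ and $x_i$ to the $q$-variables, but that is not needed for the lemma itself.)

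The main obstacle is establishing the orbifold rubber diagonalization with its $x_i$-dependent eigenvalue: one must control the $\bP^1\times\cB\bZ_n$ rubber integrals in the representation basis, i.e. identify precisely which twisted and untwisted insertions occur and verify that the character transform of the connected one-point rubber function equals the linear functional $\sqrt{-1}u\,f_T(\lambda)+\sum_i f_i(\lambda)x_i$. This is parallel to, but heavier than, the ordinary-partition computation used for Lemma \ref{frame:s3}; once it is in place, the remainder is just character orthogonality and a rearrangement of sums.
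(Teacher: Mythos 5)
Your proposal is correct and follows essentially the same route as the paper: the paper treats Lemma \ref{frame:a3} as an immediate reformulation of Lemma \ref{frame:a2} via the character transform, using exactly the diagonalization of $\tilde H^\bullet_{\nu,\mu}$ in the representation basis, which is already supplied in closed form by equation \eqref{hurgen} of Appendix \ref{sec:rubber} (so the ``main obstacle'' you flag requires no new work, only character orthogonality applied to that formula). The only caveats are bookkeeping ones the paper itself elides: the $\xi_{2n}^{-i}$ weights attached to the $x_i$ in the definition of $\tilde H^\bullet$, and the appearance of $-\nu$ versus $\nu$ when invoking orthogonality for the complex characters of $\bZ_n\wr S_d$.
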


After the prescribed identification of variables (again applying Lemma \ref{lem:central}), the prefactor in the representation basis becomes
\[
\left(\prod_{(i,j)\in\rho^{\alpha_-}}q^{i-j} \right)^{\frac{w_3}{nw_1}-1}\left( \left(-\xi_{2n}\right)^{-|\lambda|} \prod_k \xi_n^{-k|\lambda_k|} \prod_{(i,j)\in\bar\lambda}q_{j-i}^{\frac{i-j}{n}} \right)^{\frac{nw_1}{w_3}-1}
\]
which is compatible with the framing dependence in Definition \ref{def:dt}. We have thus reduced the asymmetric correspondence to the case of $w=w_a$.

\section{Computation of Initial Values}\label{sec:initial}

In this section we complete the proof of the two leg correspondence, modulo a combinatorial identity (Theorem \ref{thm:comb}, proved in Section \ref{sec:comb}), by proving that the correspondence holds for the inital values $w_s$ and $w_a$.

\subsubsection{Symmetric Case}

Recall that $w_s=(\frac{1}{n},-\frac{1}{n},0)$.  Since $w_3=0$, all terms $V_{g,\gamma}(\tau^+,\tau^-,\emptyset;w_a)$ vanish by definition unless either
\begin{itemize}
\item $l(\tau^{\pm})=1$ and $l(\tau^{\mp})=0$, or
\item $l(\tau^+)=l(\tau^-)=1$, $|\tau^+|=|\tau^-|=d$, $g=0$, and $\gamma=\emptyset$.
\end{itemize}

In the first case, by utilizing the $\bZ_n$-Mumford relation, the fact that $\Mbar_{g,\gamma}(\cB\bZ_n)\rightarrow\Mbar_{g,|\gamma|}$ has degree $n^{2g-1}$, and the string equation, we compute:

\[
\tilde V_{g,\gamma}^\alpha((d),\emptyset,\emptyset;w_s)=
\begin{cases}
(\alpha_+)^{d+1}\frac{\sqrt{-1}(-1)^{|\gamma|+\frac{d}{n}-\sum_i\frac{i}{n}m_i(\gamma)}d^{2g-2+|\gamma|}}{n^{|\gamma|-1}}\int_{\Mbar_{g,1}}\lambda_g\psi^{2g-2} & d=\sum m_i(\gamma)i \text{ mod } n\\
0 & \text{else.}
\end{cases}
\]
and
\[
\tilde V_{g,\gamma}^\alpha(\emptyset,(d),\emptyset;w_s)=\begin{cases}
(\alpha_-)^{d+1}\frac{\sqrt{-1}(-1)^{\frac{d}{n}+\sum_i\frac{i}{n}m_i(\gamma)}d^{2g-2+|\gamma|}}{n^{|\gamma|-1}}\int_{\Mbar_{g,1}}\lambda_g\psi^{2g-2} & -d=\sum m_i(\gamma)i \text{ mod } n\\
0 & \text{else}
\end{cases}
\]

In the second case, we compute straight from the definitions:
\[
\tilde V^{\alpha}_{0,\emptyset}((d),(d),\emptyset;w_s):=\lim_{w\rightarrow w_s} \tilde V^{\alpha}_{0,\emptyset}((d),(d),\emptyset;w)=\frac{\alpha_+\alpha_-}{d}.
\]

We now use these computations to prove the symmetric correspondence at framing $w_s$.  The following computations are similar to those in \cite{z:gmvf}.  We start by writing
\begin{equation}\label{ws:1}
\tilde V^{\alpha}_{(d),\emptyset,\emptyset}(x,u;w_s)=\sum_{g,\gamma} \sum_{j=0}^{n-1}\frac{\xi_n^{j\left(d-\sum_im_i(\gamma)i\right)}}{n}\tilde V^{\alpha}_{g,\gamma}((d),\emptyset,\emptyset;w_s)u^{2g-1}\frac{x^\gamma}{\gamma!}
\end{equation}
where the sum over $j$ simply encodes the necessary condition on the degree and twistings.  Applying the Faber-Pandharipande identity \cite{fp:hiagwt}:
\[
\sum_gt^{2g}\int_{\Mbar_{g,1}}\lambda_g\psi^{2g-2}=\frac{t}{2}\csc\left(\frac{t}{2}\right),
\]
\eqref{ws:1} becomes
\begin{equation}\label{ws:2}
(\alpha_+)^{d+1}\frac{\sqrt{-1}\xi_{2n}^d}{2d}\sum_{j=0}^{n-1}\xi_n^{jd}(E_j(x))^d\csc\left(\frac{du}{2}\right)
\end{equation}
where
\[
E_j(x):=\exp\left(-\sum_{i=1}^{n-1}\frac{\xi_{2n}^{-i(2j+1)}}{n}x_i\right)
\]
Introducing variables $y^+=(y_0^+,y_1^+,\dots)$ such that $p_d^+=\sum (-y_i^+)^d$, \eqref{ws:2} implies that
\begin{align}\label{ws:3}
\nonumber\sum_d \tilde V^\alpha_{(d),\emptyset,\emptyset}(x,u;w_s) p_d^+&=\alpha_+\sum_{d\geq 0}\sum_{j=0}^{n-1}\sum_{k,l\geq 0}\frac{1}{d}\left(-\alpha_+\xi_{2n}\xi_{n}^jE_j(x)e^{\sqrt{-1}\frac{u}{2}}e^{\sqrt{-1}uk}y_l^+\right)^d \\
&=-\alpha_+\log\left( \prod_{j=0}^{n-1}\prod_{k,l\geq 0}\left(1+\alpha_+\xi_{2n}\xi_{n}^jE_j(x)e^{\sqrt{-1}\frac{u}{2}}e^{\sqrt{-1}uk}y_l^+ \right)\right).
\end{align}
After the prescribed identification of variables, we have \cite{z:gmvf}:
\[
\xi_{n}^jE_j(x)=-\xi_{2n}^{-1}q_1^{-\frac{1}{n}}\cdots q_{n-1}^{-\frac{n-1}{n}}q_{j+1}\cdots q_{n-1},
\]
so \eqref{ws:3} becomes
\begin{equation}\label{ws:4}
-\alpha_+\log\left(\prod_{j=0}^{n-1}\prod_{k,l\geq 0}\left(1-\alpha_+q^{\frac{1}{2}}q_1^{-\frac{1}{n}}\cdots q_{n-1}^{-\frac{n-1}{n}}q_{j+1}\cdots q_{n-1}q^ky_l^+ \right)\right)
\end{equation}
Finally, applying the Cauchy identity to \eqref{ws:4} we have:
\begin{align}\label{ws:5}
\nonumber\sum_d \tilde V^\alpha_{(d),\emptyset,\emptyset}(x,u;w_s) p_d^+&=\log\left( \sum_{d\geq1}\left(q^{\frac{1}{2}}q_1^{-\frac{1}{n}}\cdots q_{n-1}^{-\frac{n-1}{n}} \right)^d\sum_{|\omega|=d} \overline{s_{\omega}(\fq_\bullet)}s_{\omega^{\alpha_+}}(y^+) \right)\\
&=\log\left( \sum_{\omega} \overline{s_{\omega}(\tilde \fq_\bullet)}s_{\omega^{\alpha_+}}(y^+) \right)
\end{align}
where $\tilde \fq_\bullet:=q^{\frac{1}{2}}q_1^{-\frac{n-1}{n}}\cdots q_{n-1}^{-\frac{1}{n}} \fq_\bullet.$

Similarly, if we introduce variables $y^-=(y_0^-,y_1^-,\dots)$ such that $p_d^-=\sum (-y_i^-)^d$,
\begin{equation}\label{ws:6}
\sum_d \tilde V^\alpha_{\emptyset,\emptyset,(d)}(x,u;w_s) p_d^-=\log\left( \sum_\omega s_{\omega}(\tilde \fq_\bullet)s_{\omega^{\alpha_-}}(y^-) \right)
\end{equation}
and
\begin{equation}\label{ws:7}
\sum_{d} \tilde V^\alpha_{\emptyset,(d),(d)}(x,u;w_s) p_d^+p_d^-=\log\left( \sum_\omega s_{\omega^{\alpha^+}}(y^+)s_{\omega^{\alpha^-}}(y^-) \right).
\end{equation}
Pulling together equations \eqref{ws:5}, \eqref{ws:6}, and \eqref{ws:7} and exponentiating to pass to the disconnected series, we compute that $\tilde V^{\bullet,\alpha}_{\emptyset,\tau^+,\tau^-}(x,u;w_s)$ (after the identification of variables) is equal to the coefficient of $p_{\tau^+}p_{\tau^-}$ in the following expression:
\begin{align*}
&\left(\sum_{\omega^+} \overline{s_{\omega^+}(\tilde \fq_\bullet)}s_{(\omega^+)^{\alpha_+}}(y^+)\right)\left( \sum_\omega s_{\omega^{\alpha_+}}(y^+)s_{\omega^{\alpha_-}}(y^-) \right)\left( \sum_{\omega^-} s_{\omega^-}(\tilde \fq_\bullet)s_{(\omega^-)^{\alpha_-}}(y^-) \right)\\
&=\sum_{\rho^+,\rho^-,\omega^+,\omega^-,\omega}\overline{s_{\omega^+}(\tilde \fq_\bullet)}c_{(\omega^+)^{\alpha_+},\omega^{\alpha_+}}^{\rho^+} s_{\rho^+}(y^+)c_{(\omega^-)^{\alpha_-},\omega^{\alpha_-}}^{\rho^-} s_{\rho^-}(y^-)s_{\omega^-}(\tilde \fq_\bullet)\\
&=\sum_{\rho^+,\rho^-,\omega}\overline{s_{(\rho^+)^{\alpha_+}/\omega}(\tilde \fq_\bullet)} s_{(\rho^-)^{\alpha_-}/\omega}(\tilde \fq_\bullet)s_{\rho^+}(y^+)s_{\rho^-}(y^-)
\end{align*}
where $c_{\cdot,\cdot}^\cdot$ denotes Littlewood-Richardson coefficients.  To extract the coefficient of $p_{\tau^+}p_{\tau^-}$, we simply write the Schur functions in terms of power sum functions and we obtain
\begin{align*}
\tilde V^{\bullet,\alpha}_{\emptyset,\tau^+,\tau^-}(w_s)&=\sum_{\rho^+,\rho^-}\left(\sum_\omega \overline{s_{(\rho^+)^{\alpha_+}/\omega}(\tilde \fq_\bullet)} s_{(\rho^-)^{\alpha_-}/\omega}(\tilde \fq_\bullet) \right)(-1)^{|\rho^+|}\frac{\chi_{\rho^+}(\tau^+)}{z_{\tau^+}}(-1)^{|\rho^-|}\frac{\chi_{\rho^-}(\tau^-)}{z_{\tau^-}}\\
&=\sum_{\rho^+,\rho^-}\tilde P^\alpha_{\rho^+,\rho^-,\emptyset}(w_s)\frac{\chi_{\rho^+}(\tau^+)}{z_{\tau^+}}\frac{\chi_{\rho^-}(\tau^-)}{z_{\tau^-}}
\end{align*}
which completes the proof of the symmetric correspondence.

\subsubsection{Asymmetric Case}

Recall that $w_a=(\frac{1}{n},-1-\frac{1}{n},1)$.  The particular choice of initial condition $w_a$ is motivated by the observation of the following identity which follows immediately from the definitions:
\begin{equation}\label{conjsym}
\tilde V^\alpha_{\tau+d,\emptyset,\mu}(w_a)=n\alpha_+^{d+1}(-1)^{d+1}\xi_{2n}^{-d}\frac{|\Aut(\tau)||\Aut(\mu+d)|}{|\Aut(\tau+d)||\Aut(\mu)|}\tilde V^\alpha_{\tau,\emptyset,\mu+d}(w_a)
\end{equation}
where $\tau+d$ is the partition obtained from $\tau$ by adding a part of size $d$ and $\mu+d$ is the $n$-partition obtained from $\mu$ by adding a part of size $d$ to $\mu^k$ where $k=d\text{ mod } n$.  In other words, \eqref{conjsym} can be used to move the parts from $\tau$ over to $\mu$, one by one, until $\tau=\emptyset$.  When $\tau=\emptyset$, the correspondence holds by the main theorem in \cite{rz:ggmv}.  Therefore, to prove the correspondence for $\tau\neq\emptyset$ we need only show that $\tilde P^\alpha_{\emptyset,\rho,\lambda}(w_a)$ satisfies the representation basis analog of \eqref{conjsym}.  In other words, we need to prove that
\begin{align*}
\sum_{\rho,\lambda}&\tilde P^\alpha_{\rho,\emptyset,\lambda}(w_a)\frac{\chi_\rho(\tau+d)}{z_{\tau+d}}\frac{\chi_\lambda(\mu)}{z_\mu}\\
&=n\alpha_+^{d+1}(-1)^{d+1}\xi_{2n}^{-d}\frac{|\Aut(\tau)||\Aut(\mu+d)|}{|\Aut(\tau+d)||\Aut(\mu)|}\sum_{\rho,\lambda}\tilde P^\alpha_{\rho,\emptyset,\lambda}(w_a)\frac{\chi_\rho(\tau)}{z_{\tau}}\frac{\chi_\lambda(\mu+d)}{z_{\mu+d}}\\
&=\alpha_+^{d+1}(-1)^{d+1}\xi_{2n}^{-d}\sum_{\lambda,\rho}\tilde P^\alpha_{\rho,\emptyset,\lambda}(w_a)\frac{\chi_\rho(\tau)}{z_{\tau+d}}\frac{\chi_\lambda(\mu+d)}{z_\mu}
\end{align*}
where the second equality follows from formulae for the order of centralizers given by equations \eqref{orderofcent1} and \eqref{orderofcent2}. We should notice that if we can prove the above identity for $\alpha_+=1$, then the identity for $\alpha_+=-1$ is also true by the fact that $\chi_{\rho'}(\tau)=(-1)^\tau\chi_{\rho}(\tau)$. Now let us prove this identity for $\alpha_+=1$.  By definition of $\tilde P^\alpha_{\rho,\emptyset,\lambda}(w_a)$, this identity becomes
\begin{align}\label{reduction}
\nonumber\sum_{\rho,\lambda}&\chi_\rho(\tau+d)\left( \prod_k \xi_n^{-k|\lambda_k|}\frac{\chi_{\bar\lambda}(n^{|\lambda|})}{\dim(\lambda)}\chi_\lambda(\mu) \right)  \hat s_{\lambda}(\bq)\overline{\hat s_{\rho}(\fq_{\bullet-\lambda})}\\
&=\left(q_1^\frac{n-1}{n}\cdots q_{n-1}^\frac{1}{n} \right)^d \sum_{\rho,\lambda}\chi_\rho(\tau)\left( \prod_k \xi_n^{k|\lambda_k|}\frac{\chi_{\bar\lambda}(n^{|\lambda|})}{\dim(\lambda)}\chi_\lambda(\mu+d) \right)  \hat s_{\lambda}(\bq)\overline{\hat s_{\rho}(\fq_{\bullet-\lambda})}
\end{align}
where we define
\[
\hat s_\lambda(\bq):=\left(\prod_{(i,j)\in\bar\lambda}q_{j-i}^\frac{i-j}{n}\right) s_\lambda(\bq)
\]
and
\[
\hat s_\rho(\fq_{\bullet-\lambda}):=\left(\prod_{(i,j)\in\rho}q^{i-j}\right) s_\rho(\fq_{\bullet-\lambda}).
\]
The identity \eqref{reduction} is implied by the next theorem.
\begin{theorem}\label{thm:comb}
For any partition $\omega$ and $n$-partition $\sigma$,
\begin{align*}
\hat s_{\sigma}(\bq)\sum_{|\rho|=|\omega|+d}&\overline{\hat s_\rho(\fq_{\bullet-\sigma})}\chi_{\rho\setminus\omega}(d)\\
&=(q_1^{\frac{1}{n}}\cdots q_{n-1}^{\frac{n-1}{n}})^d\sum_{|\lambda|=|\sigma|+n}\hat s_{\lambda}(\bq)\overline{\hat s_\omega(\fq_{\bullet-\lambda})}\chi_{\bar\lambda\setminus\bar\sigma}(nd)
\end{align*}
where the relative character $\chi_{\rho\setminus\omega}(d)$ for two Young diagrams $\rho$ and $\omega$ is equal to
\[
\chi_{\rho\setminus\omega}(d)=
\begin{cases}
(-1)^{k-1} &\text{ if $\rho\setminus\omega$ is a connected border strip of height $k$,}\\
0 &\text{ else.}
\end{cases}
\]
\end{theorem}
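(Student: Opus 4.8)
The plan is to prove Theorem \ref{thm:comb} by translating both sides into statements about (infinite) wedge/vertex operators acting on the fermionic Fock space, where loop Schur functions and skew Schur functions acquire their standard representation-theoretic meaning. The relative character $\chi_{\rho\setminus\omega}(d)$ (a border-strip character) is precisely the matrix coefficient governing the action of the power-sum operator $p_d$ (equivalently, a single factor of a vertex operator $\Gamma_\pm$) between Schur functions $s_\omega$ and $s_\rho$; similarly $\chi_{\bar\lambda\setminus\bar\sigma}(nd)$ governs $p_{nd}$ acting between $s_{\bar\sigma}$ and $s_{\bar\lambda}$. So the first step is to rewrite the left-hand sum $\sum_{|\rho|=|\omega|+d}\overline{\hat s_\rho(\fq_{\bullet-\sigma})}\chi_{\rho\setminus\omega}(d)$ as $p_d\bigl(\overline{\hat s_\bullet(\fq_{\bullet-\sigma})}\bigr)$ evaluated at $\omega$ — i.e. as the coefficient of $s_\omega$ in the image of a suitable "generating symmetric function in the diagonal variables" under multiplication by a power sum — and likewise recast the right-hand sum via $p_{nd}$.

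Second, I would use the $n$-quotient correspondence between $\bar\lambda$ and $\lambda=(\lambda^0,\dots,\lambda^{n-1})$, together with the $n$-quotient factorization of loop Schur functions recalled in Appendix \ref{sec:loop} (and used throughout \cite{rz:ggmv}), to identify the hat-normalized loop Schur function $\hat s_\lambda(\bq)$ with an explicit monomial times a product of ordinary Schur functions in the variables $\fq_\bullet$ with their $\bZ_n$-grading. The key structural input is that passing from $\bar\sigma$ to $\bar\lambda$ by adding a single border strip of length $nd$ corresponds, on the $n$-quotient side, to adding a border strip of length $d$ to exactly one of the components $\sigma^k$ — and the sign $\chi_{\bar\lambda\setminus\bar\sigma}(nd)$ matches, up to the explicit monomial $(q_1^{1/n}\cdots q_{n-1}^{(n-1)/n})^d$ and the $\xi_n$-phases already extracted in \eqref{reduction}, the border-strip sign on that single component. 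This is the combinatorial heart of the matching and is where the exponents of the $q_i$ and the phases $\xi_n^{\pm k|\lambda_k|}$ must be bookkept carefully.

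Third, with both sides expressed as power-sum (respectively $n$-th power-sum) operators acting on Cauchy-type kernels $\sum_\rho \overline{s_\rho(\fq_{\bullet-\sigma})} s_\rho(-)$, the identity reduces to a commutation/adjointness relation: multiplying by $p_d$ in one set of Schur-function variables is adjoint, under the Hall inner product and the Cauchy identity, to a derivation $p_d^\perp = n\,\partial/\partial p_n$-type operator in the dual variables, and the shift $\fq_\bullet \to \fq_{\bullet-\sigma}$ (which encodes subtracting the "content diagonal" of $\bar\sigma$) intertwines these. Concretely I expect to reduce everything to the single-variable generating-function identity $\prod_i (1 - t \fq_i)^{-1}$ versus $\prod_i(1-t^n \fq_i^n)^{-1}$-type manipulations together with the principal-specialization formula for $\hat s_\lambda(\bq)$, i.e. the statement that $\hat s_\lambda(\bq)$ is (up to the prefactors in Definition \ref{def:dt}) the specialization of a Schur function at the diagonal of $\bar\lambda$; Lemma \ref{hookcontent} (the hook-content flavored lemma referenced just before \eqref{eq:divert}) should supply exactly this.

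The main obstacle I anticipate is not the vertex-operator formalism itself but the precise reconciliation of the three layers of normalization: the loop-Schur hat-normalization $\prod_{(i,j)\in\bar\lambda}q_{j-i}^{(i-j)/n}$, the ordinary hat-normalization $\prod_{(i,j)\in\rho}q^{i-j}$, and the fractional monomial $(q_1^{1/n}\cdots q_{n-1}^{(n-1)/n})^d$ together with the $\xi_n^{\pm k|\lambda_k|}$ phases — all of which must conspire so that adding an $nd$-strip to $\bar\sigma$ is exactly dual to adding a $d$-strip to $\omega$. I would isolate this by first proving the $n=1$ case (where the theorem degenerates to a classical skew-Schur/border-strip identity, essentially the Murnaghan–Nakayama rule packaged via $p_d$) to fix the mechanism, then treating general $n$ by the $n$-quotient reduction, checking the monomial bookkeeping on a single column/row strip where everything is forced, and finally invoking multilinearity in $d$ (or a generating-function argument summing over $d$) to conclude. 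A secondary subtlety is the role of the overline (the involution $q_i \leftrightarrow q_{-i}$): it must be tracked so that it lands consistently on $\fq_{\bullet-\sigma}$ versus $\fq_{\bullet-\lambda}$, but this should follow formally once the un-overlined identity is established, by applying the symmetry to $\sigma$, $\omega$, and $\bq$ simultaneously.
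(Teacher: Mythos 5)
Your strategy is genuinely different from the paper's, which proves Theorem \ref{thm:comb} by writing \emph{both} specializations --- $\overline{\hat s_\rho(\fq_{\bullet-\sigma})}$ via the bialternant formula \eqref{schurdet} and $\hat s_\lambda(\bq)$ via a loop Jacobi--Trudi determinant (Lemma \ref{loopdet}) massaged into the generalized Vandermonde form \eqref{loopdet6} --- as ratios of determinants in the \emph{same} variables $q_0^{-1}\cdots q_{\bar\sigma_i+m-i}^{-1}$. Adding a $d$-strip to $\omega$ then modifies a single column, adding an $nd$-strip to $\bar\sigma$ modifies a single row, and cofactor expansion produces the identical minors $D_{s,t}$ on both sides, with the coefficients matched term by term after a degree estimate controls the $m\to\infty$ limit. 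Against that, your proposal has two genuine gaps. First, the ``$n$-quotient factorization of loop Schur functions'' you invoke from Appendix \ref{sec:loop} does not exist there and is false in general: by Lemma \ref{hookcontent} the denominator of $s_\lambda(\bq)$ is $\prod_{\square\in\bar\lambda}(1-\prod_i q_i^{h_i(\square)})$, and the hooks of $\bar\lambda$ of length not divisible by $n$ do not organize by quotient component, so $\hat s_\lambda(\bq)$ is not a monomial times a product of ordinary Schur functions of the $\lambda^k$. Your second step therefore rests on an unavailable premise, and with it the reduction of the general case to $n=1$.

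Second, and more fundamentally, the step you describe as ``a commutation/adjointness relation'' is not the standard Hall-pairing adjointness of $p_d$ and $p_d^\perp$. On the left, $\sum_\rho \overline{\hat s_\rho(\fq_{\bullet-\sigma})}\chi_{\rho\setminus\omega}(d)$ is indeed Murnaghan--Nakayama applied to the function index. But on the right, the sum over $\lambda$ varies the \emph{evaluation point} $\fq_{\bullet-\lambda}$ of $\hat s_\omega$ simultaneously with the prefactor $\hat s_\lambda(\bq)$; this is not dual to anything under the Cauchy kernel, because $\lambda$ does not index a dual basis --- it indexes a family of specializations. The entire content of the theorem is the interaction between the function index and the evaluation point, and your proposal supplies no mechanism for it beyond the assertion that the shift ``intertwines'' the two operators. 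A vertex-operator proof is plausible in principle (one would need to commute the bosonic mode $\alpha_{nd}$ past the fermionic operators whose matrix elements between $\langle\lambda|$ and $|\rho\rangle$ produce $s_\rho(\fq_{\bullet-\lambda})$, tracking the $q$-dependent arguments), but that computation is exactly the missing heart of the argument, and nothing in your outline performs it. You would also need to address convergence: the paper's identity only holds as a limit of formal series in $\bq$, which is why Corollary \ref{cor:bound} and the choice of $M$ relative to $N$ appear in the actual proof.
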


We prove Theorem \ref{thm:comb} in the next section.  In the meantime, note that Theorem \ref{thm:comb} implies \eqref{reduction} by summing over $\omega$ and $\sigma$ and using the identities
\[
\chi_\rho(\tau+d)=\sum_\omega\chi_{\rho\setminus\omega}(d)\chi_\omega(\tau)
\]
and
\[
\prod_k \xi_n^{k|\lambda_k|}\frac{\chi_{\bar\lambda}(n^{|\lambda|})}{\dim(\lambda)}\chi_\lambda(\mu+d)=\sum_{\sigma}\chi_{\bar\lambda\setminus\bar\sigma}(nd)\prod_k \xi_n^{k|\sigma_k|}\frac{\chi_{\bar\sigma}(n^{|\sigma|})}{\dim(\sigma)}\chi_\sigma(\mu).
\]
This completes the proof of the asymmetric correspondence.

\section{Proof of Theorem \ref{thm:comb}}\label{sec:comb}

\subsection{Overview}
Notice that the sums in Theorem \ref{thm:comb} correspond to adding strips to Young diagrams.  On the left, we are summing over all ways of adding $d$ strips to $\omega$ and on the right we are summing over all ways of adding $nd$ strips to $\sigma$.  To prove the identity, we begin by expressing all of the Schur functions in terms of determinants of certain matrices.  Each summand in the left side of Theorem \ref{thm:comb} can be obtained by modifying a column of the matrix and each summand in the right side is obtained by modifying a row.  Expanding the determinants along the corresponding columns and rows proves the theorem.

\subsection{Determinantal Expressions}

\subsubsection{Schur Functions}

By the classical definition of the Schur functions \cite{m:sfhp}, we have
\begin{align}\label{schurdet}
\nonumber \overline{\hat s_\omega(\fq_{\bullet-\sigma})}&=\prod_{(i,j)\in\omega}q^{i-j}\lim_{m\rightarrow\infty}\frac{\det\left( \left( q_0^{-1}\cdots q_{\bar\sigma_i-i}^{-1} \right)^{m-j+\omega_j} \right)_{1\leq i,j \leq m}}{\det \left( \left( q_0^{-1}\cdots q_{\bar\sigma_i-i}^{-1} \right)^{m-j} \right)_{1\leq i,j\leq m}}\\
&=\prod_{(i,j)\in\omega}q^{i-j}\lim_{m\rightarrow\infty}q^{\frac{m}{n}|\omega|}\frac{\det\left( \left( q_0^{-1}\cdots q_{\bar\sigma_i+m-i}^{-1} \right)^{m-j+\omega_j} \right)_{1\leq i,j \leq m}}{\det \left( \left( q_0^{-1}\cdots q_{\bar\sigma_i+m-i}^{-1} \right)^{m-j} \right)_{1\leq i,j\leq m}}
\end{align}
where we adopt the convention
\[
q_0^{-1}\cdots q_k^{-1}=\begin{cases}
1 & k=-1\\
q_{n-1}\cdots q_{n+k-1} & k<-1.
\end{cases}
\]

\subsubsection{Loop Schur Functions}

For $l>0$, define functions
\[
\hat h_l^r:=\frac{q_r^{-\frac{r}{n}}\cdots q_{r+l-1}^{-\frac{r+l-1}{n}}}{\prod_{i=1}^l\left( 1-q_{l-i+r}\cdots q_{l-1+r} \right)}.
\]
We define $\hat h_0^r=1$ and $\hat h_l^r=0$ for $l<0$.

\begin{lemma}\label{loopdet}
For $m\geq l(\bar\sigma)$,
\[
\hat s_\sigma(\bq)=\det\left( \hat h_{\bar\sigma_i-i+j}^{1-j} \right)_{1\leq i,j \leq m}
\]
as rational functions in $\bq$.
\end{lemma}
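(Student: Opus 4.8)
The plan is to prove Lemma \ref{loopdet} as the loop analogue of the classical Jacobi--Trudi identity, following the Lindström--Gessel--Viennot (LGV) lattice-path method. The first step is to unwind the definition of the loop Schur function from Appendix \ref{sec:loop}: one writes $s_\sigma(\bq)$, and hence $\hat s_\sigma(\bq)$ after absorbing the content prefactor $\prod_{(i,j)\in\bar\sigma}q_{j-i}^{(i-j)/n}$, as a generating function $\sum_T \operatorname{wt}(T)$ over the loop-semistandard fillings $T$ of the colored Young diagram $\bar\sigma$, where the weight of a box depends on its entry and on its content $j-i \bmod n$. In the same language one identifies $\hat h_l^r$ with the generating function for a single weakly increasing row of length $l$ whose left-most box is assigned content $r$; summing the resulting geometric series over admissible entries reproduces exactly the stated closed form, and the monomial factor $q_r^{-r/n}\cdots q_{r+l-1}^{-(r+l-1)/n}$ is precisely the content-correction that makes the single-row weight compatible with the box weights occurring in $\hat s_\sigma$. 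The conventions $\hat h_0^r=1$ and $\hat h_l^r=0$ for $l<0$ are the empty-row case and the no-path case.

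The second step is to expand the determinant $\det\bigl(\hat h^{\,1-j}_{\bar\sigma_i-i+j}\bigr)_{1\le i,j\le m}$ via the LGV lemma. With suitable sources $A_i$ placed at heights $\bar\sigma_i-i$ and sinks $B_j$ at heights $-j$, arranged so that a lattice path from $A_i$ to $B_j$ recording its horizontal displacement in the $j$-th block contributes the factor $\hat h^{\,1-j}_{\bar\sigma_i-i+j}$, the determinant becomes the signed sum over $m$-tuples of such paths. The non-intersecting families are in weight-preserving bijection with the loop-semistandard fillings of $\bar\sigma$, while the intersecting families cancel in pairs under the standard sign-reversing involution that swaps tails at the first point of intersection; together with the first step this yields $\det\bigl(\hat h^{\,1-j}_{\bar\sigma_i-i+j}\bigr)=\sum_T\operatorname{wt}(T)=\hat s_\sigma(\bq)$.

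The step I expect to be the main obstacle is the bookkeeping of the $\bZ_n$-decorations. One must verify that the superscript shift $1-j$ in the $j$-th column of the matrix is exactly what assigns content $j-i \bmod n$ to the box in row $i$, column $j$, so that the loop weights match term-by-term across the bijection, and that this cyclically-decorated weight is genuinely invariant under the tail-swapping involution. Because the box weight depends only on the content modulo $n$, and the superscript bookkeeping is designed to respect this, the argument goes through; this dependence on content (rather than on an actual alphabet) is also why the identity is asserted only as an identity of rational functions in $\bq$. If the weight-matching becomes unwieldy, an alternative purely algebraic route is available: combine a bialternant presentation of $s_\sigma(\bq)$ in the spirit of \eqref{schurdet} with a generating-function identity for the $\hat h^r_l$ mirroring $\sum_l h_l z^l=\prod_i(1-x_iz)^{-1}$, and run the classical manipulation dividing the numerator alternant by the denominator alternant. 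Either way, Lemma \ref{loopdet} is exactly the form of the loop-Schur side needed so that, as explained in the Overview of Section \ref{sec:comb}, each summand on the right of Theorem \ref{thm:comb} arises by modifying a single row of this matrix.
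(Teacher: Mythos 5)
Your proposal is correct and follows essentially the same route as the paper: both rest on the loop Jacobi--Trudi identity for $s_\sigma$ proved by the Lindstr\"om--Gessel--Viennot lattice-path argument, with the conventions $\hat h_0^r=1$, $\hat h_l^r=0$ ($l<0$) handling the stabilization in $m$. The only difference is organizational --- the paper first states the identity for the unnormalized $h_l^r$ in the generic alphabet $x_{i,j}$ and then specializes $x_{i,j}=q_i^j$ and pulls the content monomials out of the rows and columns of the determinant, while you build the normalization into the path weights from the start (which works because that correction factors as a product of a row-only and a column-only part).
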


\begin{proof}
We first remark that the Jacobi-Trudi identity generalizes to loop Schur functions \cite{l:lsfafmp}.  In particular, if we define the complete homogeneous loop functions in the variables $x=\{x_{i,j}:i\in\bZ_n,j\geq 1\}$ as
\[
h_l^r:=\sum_{1\leq i_1\leq \cdots \leq i_l} x_{r,i_1}\cdots x_{r+l-1,i_l},
\]
then for any $m>l(\bar\sigma)$,
\begin{equation}\label{jactrud}
s_{\sigma}(x)=\det\left( h_{\bar\sigma_i-i+j}^{1-j} \right)_{1\leq i,j \leq m}.
\end{equation}
The proof of \eqref{jactrud} is a straightforward generalization of the lattice path argument which proves the classical Jacobi-Trudi identity \cite{s:ec2}.  Moreover, since $h_0^r=1$ and $h_k^r=0$ for $k<0$, then
\begin{equation}\label{jactrud2}
\det\left( h_{\bar\sigma_i-i+j}^{1-j} \right)_{1\leq i,j \leq m}=\det\left( h_{\bar\sigma_i-i+j}^{1-j} \right)_{1\leq i,j \leq l(\bar\sigma)}.
\end{equation}
Notice that after the substitution, $x_{i,j}=q_i^j$, we have
\[
h_{\bar\sigma_i-i+j}^{1-j}=q_{1-j}^{\frac{1-j}{n}}\cdots q_{\bar\sigma_i-i}^{\frac{\bar\sigma_i-i}{n}}\hat h_{\bar\sigma_i-i+j}^{1-j}.
\]
Therefore,
\begin{align*}
s_\sigma(\bq)&=\det\left( q_{1-j}^{\frac{1-j}{n}}\cdots q_{\bar\sigma_i-i}^{\frac{\bar\sigma_i-i}{n}}\hat h_{\bar\sigma_i-i+j}^{1-j} \right)_{1\leq i,j \leq l(\bar\sigma)}\\
&=\prod_{i=1}^{l(\bar\sigma)} q_1^\frac{1}{n}\cdots q_{\bar\sigma_i-i}^{\frac{\bar\sigma_i-i}{n}}\prod_{j=1}^ {l(\bar\sigma)} q_1^{-\frac{1}{n}}\cdots q_{1-j}^{\frac{1-j}{n}}\det\left( \hat h_{\bar\sigma_i-i+j}^{1-j} \right)_{1\leq i,j \leq l(\bar\sigma)}\\
&=\prod_{(i,j)\in\bar\sigma}q_{j-i}^{\frac{j-i}{n}}\det\left( \hat h_{\bar\sigma_i-i+j}^{1-j} \right)_{1\leq i,j \leq m}.
\end{align*}

\end{proof}

From this point on, we take $m\geq l(\bar\sigma)$ to be a multiple of $n$.

Define
\[
f_{a,b}^r:=q_r^{\frac{r}{n}}\cdots q_{r+b-1}^{\frac{r+b-1}{n}}\prod_{i=1}^b\left( 1-q_{r+i-1}\cdots q_{r+a-1} \right).
\]
For $l\leq m$ we have the relation
\[
\hat h_l^r = \hat h_m^{r+l-m}f_{m,m-l}^{r+l-m}.
\]
In particular,
\[
\hat h_{\bar\sigma_i-i+j}^{1-j} = \hat h_{\bar\sigma_i-i+m}^{1-m}f_{\bar\sigma_i-i+m,m-j}^{1-m}.
\]
Inserting this into Lemma \ref{loopdet} gives
\begin{equation}\label{loopdet2}
\hat s_\sigma(\bq)=\prod_{i=1}^m \hat h_{\bar\sigma_i-i+m}^{1-m}\det\left( f_{\bar\sigma_i-i+m,m-j}^{1-m}\right)_{1\leq i,j \leq m}.
\end{equation}
We can rewrite
\[
f_{a,b}^r=q_r^{\frac{r}{n}}\cdots q_{r+b-1}^{\frac{r+b-1}{n}}\sum_{l=0}^b(-1)^le_l\left( q_r\cdots q_{r+a-1},\dots,q_{r+b-1}\cdots q_{a+r-1}\right)
\]
where the $e_l$ are elementary symmetric functions.  Therefore,
\begin{align}\label{loopdet3}
\nonumber f_{\bar\sigma_i-i+m,m-j}^{1-m}&=q_1^{\frac{1-m}{n}}\cdots q_{m-j}^{-\frac{j}{n}}\sum_{l=0}^{m-j}(-1)^le_l\left( q_1\cdots q_{\bar\sigma_i+m-i},\dots, q_{m-j}\cdots q_{\bar\sigma_i+m-i} \right)\\
&=q_1^{\frac{1-m}{n}}\cdots q_{m-j}^{-\frac{j}{n}}\sum_{l=0}^{m-j}\left(-q_1\cdots q_{\bar\sigma_i+m-i}  \right)^l e_l\left(1,q_1^{-1},q_1^{-1}q_2^{-1},...,q_1^{-1}\cdots q_{m-j-1}^{-1}  \right)
\end{align}
where the first equality uses the assumption that $n$ divides $m$.  Substituting \eqref{loopdet3} into \eqref{loopdet2} and simplifying using elementary column operations, we obtain
\begin{align}\label{loopdet4}
\nonumber \hat s_\sigma(\bq)=&\prod_{i=1}^m \hat h_{\bar\sigma_i-i+m}^{1-m}\prod_{j=1}^m q_1^{\frac{1-m}{n}}\cdots q_{m-j}^{-\frac{j}{n}}\\
&\cdot \prod_{j=1}^{m-1}(-1)^je_l\left( 1,q_1^{-1},q_1^{-1}q_2^{-1},...,q_1^{-1}\cdots q_{m-j-1}^{-1} \right)\\
&\nonumber \cdot \det \left( \left( q_1\cdots q_{\bar\sigma_i+m-i} \right)^{m-j} \right)_{1\leq i,j\leq m}.
\end{align}

By the hook-length formula for loop Schur functions, we have an equality as rational functions:
\begin{equation}\label{loopdet5}
\hat s_\sigma(\bq)=(-1)^{|\bar\sigma|}q^{-\frac{|\bar\sigma|}{n}}\prod_{(i,j)\in\bar\sigma}q_{j-i}^{2\frac{i-j}{n}+i-j}\hat s_\sigma\left(\bq^{-1}\right).
\end{equation}
Substituting \eqref{loopdet4} into the right side of \eqref{loopdet5}, we obtain the following equality of rational functions:
\begin{align}\label{loopdet6}
\nonumber \hat s_\sigma(\bq)=&q_0^{\frac{m(m-1)}{2}}(-1)^{|\bar\sigma|}q^{-\frac{|\bar\sigma|}{n}}\prod_{(i,j)\in\bar\sigma}q_{j-i}^{2\frac{i-j}{n}+i-j}\prod_{i=1}^m \hat h_{\bar\sigma_i-i+m}^{1-m}\left(\bq^{-1}\right)\\
&\cdot \prod_{j=1}^m q_1^{\frac{1-m}{n}}\cdots q_{m-j}^{-\frac{j}{n}}\prod_{j=1}^{m-1}(-1)^je_l\left( 1,q_1,q_1q_2,...,q_1\cdots q_{m-j-1} \right)\\
\nonumber&\cdot \det \left( \left(q_0^{-1} q_1^{-1}\cdots q_{\bar\sigma_i+m-i}^{-1} \right)^{m-j} \right)_{1\leq i,j\leq m}.
\end{align}

\subsection{Proof of Theorem \ref{thm:comb}}

Theorem \ref{thm:comb} is equivalent to the following identity
\begin{equation}\label{combident1}
(q_1^{\frac{1}{n}}\cdots q_{n-1}^{\frac{n-1}{n}})^{-d}\sum_{|\rho|=|\omega|+d}\overline{\hat s_\rho(\fq_{\bullet-\sigma})}\chi_{\rho\setminus\omega}(d)
=\sum_{|\lambda|=|\sigma|+d}\frac{\hat s_{\lambda}(\bq)}{\hat s_{\sigma}(\bq)}\overline{\hat s_\omega(\fq_{\bullet-\lambda})}\chi_{\bar\lambda\setminus\bar\sigma}(nd)
\end{equation}

We now compute both sides of \eqref{combident1} using the determinantal expressions from the previous section.

\subsubsection{Left side of \eqref{combident1}}

By the determinantal expression for Schur functions \eqref{schurdet}, the left side of \eqref{combident1} is obtained in the limit $m\rightarrow \infty$ from
\begin{align}\label{combident2}
&\nonumber \left(q_1^{\frac{1}{n}}\cdots q_{n-1}^{\frac{n-1}{n}}\right)^{-d}\sum_{|\rho|=|\omega|+d}   \left(\prod_{(i,j)\in\rho}q^{i-j}q^{\frac{m}{n}|\rho|}\frac{\det\left( \left( q_0^{-1}\cdots q_{\bar\sigma_i+m-i}^{-1} \right)^{m-j+\rho_j} \right)_{1\leq i,j \leq m}}{\det \left( \left( q_0^{-1}\cdots q_{\bar\sigma_i+m-i}^{-1} \right)^{m-j} \right)_{1\leq i,j\leq m}}\right)    \chi_{\rho\setminus\omega}(d)\\
&=\sum_{t=1}^m\left(q_1^{\frac{1}{n}}\cdots q_{n-1}^{\frac{n-1}{n}}\right)^{-d}q^{\frac{m}{n}(|\omega|+d)}\prod_{(i,j)\in\omega}q^{i-j}\prod_{l=1}^d q^{-(\omega_t-t+l)} \frac{\det\left( \left( q_0^{-1}\cdots q_{\bar\sigma_i+m-i}^{-1} \right)^{m-j+\omega_j+d\delta_{j,t}} \right)_{1\leq i,j \leq m}}{\det \left( \left( q_0^{-1}\cdots q_{\bar\sigma_i+m-i}^{-1} \right)^{m-j} \right)_{1\leq i,j\leq m}}.
\end{align}
The second line follows from the observation that adding a $d$-strip to $\omega$ corresponds to adding $d$ to a single part of $\omega$.  The sign $\chi_{\rho\setminus\omega}(d)$ corresponds to reordering the columns in the numerator until the exponents are strictly decreasing.

Expanding the determinant in the numerator in the right side of \eqref{combident2} along the $t$th column, we obtain
\begin{align}\label{combident3}
\sum_{s,t=1}^m(-1)^{s+t}\nonumber\left(q_1^{\frac{1}{n}}\cdots q_{n-1}^{\frac{n-1}{n}}\right)^{-d}&q^{\frac{m}{n}(|\omega|+d)}\prod_{(i,j)\in\omega}q^{i-j}\prod_{l=1}^d q^{-(\omega_t-t+l)}\left( q_0^{-1}\cdots q_{\bar\sigma_s+m-s}^{-1} \right)^{m-t+\omega_t+d}\\
&\cdot \frac{\det\left( \left( q_0^{-1}\cdots q_{\bar\sigma_i+m-i}^{-1} \right)^{m-j+\omega_j} \right)_{i\neq s, j\neq t}}{\det \left( \left( q_0^{-1}\cdots q_{\bar\sigma_i+m-i}^{-1} \right)^{m-j} \right)_{1\leq i,j\leq m}}.
\end{align}

\subsubsection{Right side of \eqref{combident1}}

The sum in the right side of \eqref{combident1} is over all ways of adding $nd$ strips to $\bar\sigma$.  Analogous to the discussion for the left side, each summand corresponds to adding $nd$ to a single part of $\bar\sigma$.  By this observation and the determinantal expressions for Schur functions \eqref{schurdet} and loop Schur functions \eqref{loopdet5}, the right side of \eqref{combident1} is obtained in the limit $m\rightarrow\infty$ from
\begin{align}\label{combident4}
\nonumber\sum_{s=1}^m & (-1)^{nd}q^{-d}\prod_{l=1}^{nd}q_{\bar\sigma_s-s+l}^{-(\bar\sigma_s-s+l)(\frac{2}{n}+1)}q^{\frac{m}{n}|\omega|}\prod_{(i,j)\in\omega} q^{i-j}\frac{\hat h_{\bar\sigma_s-s+m+nd}^{1-m}(\bq^{-1})}{\hat h_{\bar\sigma_s-s+m}^{1-m}(\bq^{-1})}\\
&\cdot\frac{\det \left( \left(q_0^{-1} \cdots q_{\bar\sigma_i+m-i+nd\delta_{i,s}}^{-1} \right)^{m+\omega_j-j} \right)_{1\leq i,j\leq m}}{\det \left( \left(q_0^{-1} \cdots q_{\bar\sigma_i+m-i+nd\delta_{i,s}}^{-1} \right)^{m-j} \right)_{1\leq i,j\leq m}}
\end{align}
where we can simplify
\begin{equation}\label{combident6}
\frac{\hat h_{\bar\sigma_s-s+m+nd}^{1-m}(\bq^{-1})}{\hat h_{\bar\sigma_s-s+m}^{1-m}(\bq^{-1})}=\frac{\prod_{l=1}^{nd}q_{\bar\sigma_s-s+l}^{\frac{1}{n}(\bar\sigma_s-s+l)}}{\prod_{l=1}^{nd}\left( 1-q_l^{-1}\cdots q_{\bar\sigma_s-s+m+nd}^{-1} \right)}.
\end{equation}

Expanding the determinant in the numerator of \eqref{combident4} along the $s$th row and incorporating \eqref{combident6}, we obtain
\begin{align}\label{combident5}
\nonumber\sum_{s,t=1}^m(-1)^{s+t+nd}\prod_{l=1}^{nd}&q_{\bar\sigma_s-s+l}^{-(\bar\sigma_s-s+l)(\frac{1}{n}+1)}q^{\frac{m}{n}|\omega|-d}\prod_{(i,j)\in\omega} q^{i-j}\frac{\left( q_0^{-1}\cdots q_{\bar\sigma_s+m-s+nd}^{-1} \right)^{m+\omega_t-t}}{\prod_{l=1}^{nd}\left( 1-q_l^{-1}\cdots q_{\bar\sigma_s-s+m+nd}^{-1} \right)}\\
&\cdot\frac{\det\left( \left( q_0^{-1}\cdots q_{\bar\sigma_i+m-i}^{-1} \right)^{m-j+\omega_j} \right)_{i\neq s, j\neq t}}{\det \left( \left( q_0^{-1}\cdots q_{\bar\sigma_i+m-i}^{-1} \right)^{m-j} \right)_{1\leq i,j\leq m}}.
\end{align}

It is left to prove that \eqref{combident3} and \eqref{combident5} are equal in the $m\rightarrow\infty$ limit as series centered at $\bq=0$.  In other words, for any number $N$, we must show that there is a corresponding $M$ so that the series \eqref{combident3} and \eqref{combident5} are equal modulo terms of degree greater than $N$.

We begin by proving that for $s$ large relative to $m$, the summands in \eqref{combident3} and \eqref{combident5} do not contribute to the large $m$ limit.  To this end, define
\[
D_{s,t}:=\frac{\det\left( \left( q_0^{-1}\cdots q_{\bar\sigma_i+m-i}^{-1} \right)^{m-j+\omega_j} \right)_{i\neq s, j\neq t}}{\det \left( \left( q_0^{-1}\cdots q_{\bar\sigma_i+m-i}^{-1} \right)^{m-j} \right)_{1\leq i,j\leq m}}.
\]
and let $d_{s,t}$ be the least degree of the terms in the $\bq=0$ expansion of $D_{s,t}$.

\begin{lemma}
\[
d_{s,t}> -\sum_{l=1}^{l(\omega)}\omega_l(\bar\sigma_l+m-l+1)+(\bar\sigma_s+m-s+1)(m-t+\omega_t).
\]
\end{lemma}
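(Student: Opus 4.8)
The plan is to estimate, separately, the lowest $\mathbf q$-degree of the numerator and of the denominator of $D_{s,t}$, using that $D_{s,t}$ is, up to sign, a ratio of determinants. Write $a_i:=q_0^{-1}\cdots q_{\bar\sigma_i+m-i}^{-1}$. For $1\le i\le m$ this is a genuine Laurent monomial in the $q$-variables (the exceptional branch $k<-1$ of the convention never occurs, since $\bar\sigma_i+m-i\ge0$); in the grading used for the $\mathbf q=0$ expansion it has degree $-(\bar\sigma_i+m-i+1)$, and because $\bar\sigma$ is a partition the integers $\bar\sigma_i-i$ are strictly decreasing, so $\deg a_1<\deg a_2<\cdots<\deg a_m$. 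With this notation,
\[
D_{s,t}=\pm\,\frac{\det\bigl(a_i^{\,m-j+\omega_j}\bigr)_{i\neq s,\ j\neq t}}{\det\bigl(a_i^{\,m-j}\bigr)_{1\le i,j\le m}}.
\]
Since the lowest-degree part of a ratio of Laurent polynomials is the difference of lowest-degree parts as soon as the denominator has a single monomial in its lowest degree, it suffices to pin down the bottom term of the denominator exactly and to bound the bottom degree of the numerator from below.

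For the denominator, $\det(a_i^{\,m-j})_{i,j}$ is, up to sign, the Vandermonde product $\prod_{i<i'}(a_{i'}-a_i)$; because $\deg a_i<\deg a_{i'}$ for $i<i'$, its lowest-degree part is the product of the lowest-degree term of each factor, namely (up to sign) the single monomial $\prod_i a_i^{\,m-i}$, of degree $\sum_i(m-i)\deg a_i=-\sum_i(m-i)(\bar\sigma_i+m-i+1)$. For the numerator, expand the $(m-1)\times(m-1)$ determinant as a signed sum over bijections $\phi\colon[m]\setminus\{s\}\to[m]\setminus\{t\}$; each summand is a monomial of degree $\sum_{i\neq s}(m-\phi(i)+\omega_{\phi(i)})\deg a_i$. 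The sequence $i\mapsto\deg a_i$ is strictly increasing and, since $\omega$ is a partition, $j\mapsto m-j+\omega_j$ is weakly decreasing, so by the rearrangement inequality this sum is minimized over all $\phi$ by the unique order-preserving bijection $\phi_0$ — the identity outside the integer interval spanned by $s$ and $t$, and a shift by one inside it. Hence the bottom degree of the numerator is at least $\sum_{i\neq s}(m-\phi_0(i)+\omega_{\phi_0(i)})\deg a_i$.

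Subtracting the denominator's degree, substituting $\deg a_i=-(\bar\sigma_i+m-i+1)$, and writing $\phi_0$ explicitly for the cases $s<t$, $s=t$, $s>t$ turns the assertion into an elementary inequality between sums indexed by $i$, which follows by Abel summation on the block $s<i\le t$ (or the symmetric block when $t<s$) together with the monotonicity of $i\mapsto\bar\sigma_i+m-i+1$ and $i\mapsto\omega_i$. (Alternatively, one can compute the bottom degree of the numerator exactly by recognizing the minor of a Vandermonde as a Schur polynomial times a smaller Vandermonde, which gives the same estimate.)

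I expect the only real work to be bookkeeping: describing $\phi_0$ precisely in the three cases, carrying the omitted row $s$ and column $t$ correctly through both determinants, and — if one wants the exact value rather than the one-sided bound — checking that the minimizing monomial in the numerator does not cancel against others of the same degree. None of the ingredients is deep: the statement is a degree count built from the Vandermonde determinant and the rearrangement inequality once the monomial structure of the $a_i$ is in hand.
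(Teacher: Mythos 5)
Your proposal is correct and follows essentially the same route as the paper: both identify the bottom degree of the Vandermonde denominator as the product of diagonal entries, bound the numerator's bottom degree by the term of the order-preserving bijection (the paper phrases your rearrangement-inequality step as "the degrees strictly grow along rows and columns, so the least degree term is the product of the main diagonal entries"), and then reduce to the elementary inequality coming from the monotonicity of $l\mapsto\bar\sigma_l+m-l+1$. The only difference is presentational — you invoke the rearrangement inequality and defer the final Abel-summation bookkeeping, while the paper writes out the $t<s$ case explicitly and declares the others similar.
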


\begin{proof}
Since all of the matrix entries have negative degree and the degrees strictly grow along rows and columns, the least degree term in the expansion of the determinant is the product of the main diagonal entries.  From this we see that the denominator in $D_{s,t}$ has degree $-\sum_{l=1}^m(\bar\sigma_l+m-l+1)(m-l)$.  Moreover, the denominator is a Vandermonde determinant and we see that by multiplying through by $\prod_{l=1}^m \left(q_0\cdots q_{\bar\sigma_l+m-l}\right)^{m-l}$, we can expand it as a product of geometric series centered at $\bq=0$.  Therefore, $d_{s,t}$ is the least degree term in the expansion of the numerator, plus $\sum_{l=1}^m(\bar\sigma_l+m-l+1)(m-l)$.

We now compute the degree of the determinant in the numerator.  We assume $t<s$, the other cases are similar.  Multiplying along the main diagonal, we find the least degree term in the numerator to be
\[
-\sum_{l\leq t-1 \atop l\geq s+1}(\bar\sigma_l+m-l+1)(m-l+\omega_l)-\sum_{l=t}^{s-1}(\bar\sigma_l+m-l+1)(m-(l+1)+\omega_{l+1}).
\]
Adding $\sum_{l=1}^m(\bar\sigma_l+m-l+1)(m-l)$ to this, we get
\[
d_{s,t}=-\sum_{l=1}^{l(\omega)}\omega_l(\bar\sigma_l+m-l+1)+\sum_{l=t}^{s-1}(\bar\sigma_l+m-l+1)(\omega_l-\omega_{l+1}+1)+(\bar\sigma_s+m-s+1)(m-s+\omega_s).
\]
We have
\begin{align*}
\sum_{l=t}^{s-1}(\bar\sigma_l+m-l+1)(\omega_l-\omega_{l+1}+1)&> \sum_{l=t}^{s-1}(\bar\sigma_s+m-s+1)(\omega_l-\omega_{l+1}+1)\\
&=(\bar\sigma_s+m-s+1)(\omega_t-\omega_s+s-t)
\end{align*}
which finishes the proof.
\end{proof}

Now let $D_{s,t}^l$ and $D_{s,t}^r$ denote the $s,t$ summand of \eqref{combident3} and \eqref{combident5}, respectively, and let $d_{s,t}^l$ and $d_{s,t}^r$ denote the least degree of the terms in the $\bq=0$ expansions. Similarly let $C_{s,t}^l$ and $C_{s,t}^r$ be the coefficents of \eqref{combident3} and \eqref{combident5} respectively so that $D_{s,t}^l=C_{s,t}^lD_{s,t}$ and $D_{s,t}^r=C_{s,t}^rD_{s,t}$, and let $c_{s,t}^l$ and $c_{s,t}^r$ be the least degree of the terms in the $\bq=0$ expansions.

\begin{corollary}\label{cor:bound}
\[
\lim_{s\rightarrow\infty} d_{s,t}^l=\lim_{s\rightarrow\infty} d_{s,t}^r=\infty
\]
More specifically, for every number $N$, there is a corresponding $S$ such that for every $s,m\geq S$ and any $t$, $d_{s,t}^l,d_{s,t}^r> N$.
\end{corollary}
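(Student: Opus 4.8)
The plan is to deduce the corollary from the preceding Lemma together with an explicit count of the order of the prefactors. Writing $D^l_{s,t}=C^l_{s,t}D_{s,t}$ and $D^r_{s,t}=C^r_{s,t}D_{s,t}$ and using that the order of vanishing at $\bq=0$ is additive under multiplication, we have $d^l_{s,t}=c^l_{s,t}+d_{s,t}$ and $d^r_{s,t}=c^r_{s,t}+d_{s,t}$. So it is enough to compute $c^l_{s,t}$ and $c^r_{s,t}$ as explicit expressions in $s,t,m$ and combine with the lower bound on $d_{s,t}$ supplied by the Lemma.

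For the left side, the prefactor read off the $s,t$ summand of \eqref{combident3} is a monomial in the $q_i^{1/n}$, so $c^l_{s,t}$ is the sum of the total exponents of its factors. The only factor involving both $s$ and $m$ is $\big(q_0^{-1}\cdots q_{\bar\sigma_s+m-s}^{-1}\big)^{m-t+\omega_t+d}$, of degree $-(\bar\sigma_s+m-s+1)(m-t+\omega_t+d)$, while $q^{\frac m n(|\omega|+d)}$ contributes $m(|\omega|+d)$ and $\prod_{l=1}^d q^{-(\omega_t-t+l)}$ contributes (among other terms) $-nd(\omega_t-t)$. Adding the Lemma's bound, the two occurrences of $\bar\sigma_s+m-s+1$ combine to $-d(\bar\sigma_s+m-s+1)$, the $m$-linear contributions cancel against $m(|\omega|+d)$ and $-m|\omega|$, and what survives is a constant depending only on $\sigma,\omega,n,d$, plus $d(s-1)$ (using $\bar\sigma_s=0$ once $s>l(\bar\sigma)$), plus a quantity bounded below uniformly in $t$ (the term $-nd(\omega_t-t)$, which is $\ge -nd\omega_1$ and in fact tends to $+\infty$ as $t\to\infty$). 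Hence $d^l_{s,t}$ grows linearly in $s$, uniformly in $m$ and $t$, which gives the required $S$.

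For the right side the same scheme applies, with one extra wrinkle: the prefactor of the $s,t$ summand of \eqref{combident5} is a monomial times $1/\prod_{l=1}^{nd}\big(1-q_l^{-1}\cdots q_{\bar\sigma_s-s+m+nd}^{-1}\big)$, which must be expanded as a Laurent series at $\bq=0$. Since each $q_l^{-1}\cdots q_{\bar\sigma_s-s+m+nd}^{-1}$ has negative degree, the lowest-order term of $1/(1-x)$ is $-x^{-1}=-q_l\cdots q_{\bar\sigma_s-s+m+nd}$, of positive degree, so the reciprocal contributes $+\sum_{l=1}^{nd}(\bar\sigma_s-s+m+nd-l+1)$ to $c^r_{s,t}$. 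Carrying out the analogous bookkeeping — now $m^2$-terms also enter, coming from the numerator $\big(q_0^{-1}\cdots q_{\bar\sigma_s+m-s+nd}^{-1}\big)^{m+\omega_t-t}$ and from $d_{s,t}$ — one finds that the $m^2$- and $m$-order terms cancel, that the $t$-dependent terms again assemble into $-nd(\omega_t-t)$, and that an $s$-linear term with coefficient $+d$ survives; this yields the corresponding $S$.

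I expect the only real obstacle to be the bookkeeping. There are many monomial factors with fractional exponents $q_i^{1/n}$, and in the right-hand case one must be careful that Laurent-expanding $1/(1-x)$ with $x$ of negative degree reverses the sign of the leading exponent — this is precisely what converts a naively negative contribution into the positive one that makes the $m$-dependence cancel. Once the cancellation of the $m^2$- and $m$-order terms is verified, the conclusion is immediate from the linear growth in $s$ and the uniform lower bound in $t$.
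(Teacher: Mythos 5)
Your proposal is correct and follows essentially the same route as the paper: add the lower bound for $d_{s,t}$ from the preceding lemma to the least degrees $c_{s,t}^l$ and $c_{s,t}^r$ of the prefactors, observe that the $m$-dependence cancels and a term linear in $s$ with positive coefficient survives, and on the right side expand the denominator $\prod_{l=1}^{nd}\bigl(1-q_l^{-1}\cdots q_{\bar\sigma_s-s+m+nd}^{-1}\bigr)^{-1}$ as a geometric series centered at $\bq=0$, where only the initial term affects the least degree. Your write-up simply makes explicit the bookkeeping that the paper's two-sentence proof leaves to the reader.
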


\begin{proof}
Add the lower bound for $d_{s,t}$ to $c_{s,t}^l$ and $c_{s,t}^r$.  In both cases, the result is bounded below by a function of $s$ which grows without bound.  In the case of \eqref{combident5}, the denominator of the prefactor must be centered at $0$ and then expanded as a geometric series.  In the expansion, only the initial term needs to be considered because all others will have higher degree.
\end{proof}

Now let $N$ be any number.  Choose $S$ satisfying the conclusion of Corollary \ref{cor:bound}.  Choose M so that $(\bar\sigma_s+M-S+nd)\cdot d_{S,t}^r> N$ for all $t$.  The terms $D_{s,t}^l$ and $D_{s,t}^r$ with $s\geq S$ only contribute terms of degree greater than $N$.  Moreover, for $s<S$, all terms in the $\bq=0$ expansion of $C_{s,t}^r$ have degree greater than $N$, except for the first.  It is then a tedious exercise to show that the initial term in the $\bq=0$ expansion of $C_{s,t}^r$ is equal to $C_{s,t}^l$.  Therefore, for our choice of $M$, \eqref{combident3} and \eqref{combident5} are equal modulo terms of degree greater than $N$.

\section{Gluing}\label{sec:gluing}

In this section we show that Conjecture \ref{conjecture} is compatible with the orbifold vertex gluing algorithms in GW and DT theory.  First, we set notation and recall the gluing algorithms.

\subsection{Notation}

Let $\cX$ be a toric CY $3$-fold with transverse $A_n$ singularities.  Then to $\cX$ we can associate a \emph{web diagram}, a trivalent planar graphs $\Gamma=\{\text{Edges}, \text{Vertices}\}$ where vertices correspond to torus fixed points in $\cX$, edges correspond to torus invariant lines, and regions delineated by edges correspond to torus invariant divisors (see eg. \cite{bcy:otv} Appendix B).  We enrich the graph structure in several ways.  First, we orient each edge which is a choice of direction for each edge.  Let $n(e)$ denote the order of the isotropy on the line $C_e$ corresponding to an edge $e$.  We label the edges adjacent to each vertex $(e_1(v),e_2(v),e_3(v))$ so that
\begin{itemize}
\item if $v$ is adjacent to an edge $e$ with $n(e)>1$, then $e_3(v)=e$, and
\item the labels $(e_1(v),e_2(v),e_3(v))$ are ordered counterclockwise.
\end{itemize}
Moreover, we equip $\cX$ with a CY $\bC^*$ action which can be recorded by weights $(w_1(v), w_2(v), w_3(v))$ on the half-edges adjacent to each vertex $v$.

In order to prove the gluing formula, we must define factors $m_e$ and $k_e$ to each edge and its definition depends on whether $n(e)=1$ or $n(e)>1$.

\subsubsection{$n(e)=1$}

In this case, $C_e$ has possible orbifold structure only over the torus fixed points, so $C_e\cong \bP^1_{n_0,n_\infty}$.  Moreover, the normal bundle splits $\cN_{C_e/\cX}\cong\cN_r\oplus\cN_l$
where $\cN_r$ ($\cN_l$) corresponds to the normal bundle summand in the direction of the torus invariant divisor to the right (left) of $e$.  Let $p$ be a generic point on $C_e$ and $p_0,p_\infty$ the torus fixed points.  By the transverse $A_n$ condition, we can write
\[
\cN_r=\cO(m[p]-\delta_0[p_0]-\delta_\infty[p_\infty]),
\]
\[
\cN_l=\cO(m'[p]-\delta_0'[p_0]-\delta_\infty'[p_\infty])
\]
with the conditions
\[
n_0=1\Rightarrow \delta_0=\delta_0'=0,
\]
\[
n_0>1\Rightarrow \delta_0,\delta_0'\in\{0,1\} \text{ and } \delta_0+\delta_0'=1,
\]
and the analogous requirements at $p_\infty$.  Moreover, by the CY condition, we have
\[
m+m'-(\delta_0+\delta_0'+\delta_\infty+\delta_\infty')=-2.
\]
For each such oriented edge with $n(e)=1$, we define
\[
m_e:=m
\]
and
\[
k_e:=0.
\]

\subsubsection{$n(e)>1$}

In this case, $C_e$ is a $\bZ_n$ gerbe over $\bP^1$.  Define the gerbe $\cG_k$ by pullback
\[\begin{CD}
\cG_k   	@>>>          \cB\bC^*       \\
@VVV                             @VV\lambda\rightarrow\lambda^nV\\
\bP^1                  @>\cO(-k)>>      \cB\bC^*
\end{CD}\]
Then $C_e$ is isomorphic to some $\cG_k$ and the isomorphism is unique if we insist that the generator of $\bZ_{n(e)}$ acts on the fibers of $\cN_r$ by multiplication by $\xi_{n(e)}^{-1}$.  In this case, we have $\deg(\cN_r)=m\in\bZ+\frac{k}{n}$ and we define
\[
m_e:=m
\]
and
\[
k_e:=k.
\]
We also let $m'_e:=\deg(\cN_l)=-m_e-2$ and $\delta_0=\delta_0'=\delta_\infty=\delta_\infty'=0$.

\subsubsection{Gluing}
Let $\Lambda$ denote a GW \emph{edge assignment}, ie. an assignment of $n(e)$-partitions $\mu_e$ to each edge $e$.  Define
\[
\mu_i(v)=\begin{cases}
\mu_{e_i(v)} & \text{ if } n(e_i(v))=1 \text{ or if } n(e_i(v))>1 \text{ and }v \text{ is the initial vertex,}\\
\{\xi^{(\mu_e)_j^ik_e-i}\mu_j^i\} & \text{ if } n(e_i(v))>1 \text{ and } v \text{ is the final vertex.}
\end{cases}
\]
Recall that there is also an index $\alpha=(\alpha_+,\alpha_-)\in\{1,-1\}^2$ in definition \ref{framedvertex}. For each vertex $v$, we define
\[
\alpha_i(v)=\begin{cases}
1 & \text{ if } v \text{ is the initial vertex of $e_i(v)$,} \\
-1 & \text{ if } v \text{ is the initial vertex of $e_i(v)$.}
\end{cases}
\]
where $i=1,2$.
At each vertex $v$, we have an induced vertex term
\[
\tilde V_{\cX,\Lambda}^{\bullet,\alpha(v)}(v)=\begin{cases}
\tilde{V}_{\mu_1(v),\mu_2(v),\mu_3(v)}^{\bullet,(-\alpha_1(v),\alpha_2(v))}(w_1(v),w_2(v),w_3(v)) & \text{ if } v \text{ is the initial vertex of } e_3(v)\\
\tilde{V}_{\mu_2(v),\mu_1(v),\mu_3(v)}^{\bullet,(-\alpha_2(v),\alpha_1(v))}(w_2(v),w_1(v),w_3(v)) & \text{ if } v \text{ is the final vertex of } e_3(v).
\end{cases}
\]

\begin{remark}
The reason we need to be careful about orientations is because we have picked a particular isomorphism of the isotropy with $\langle \xi_n \rangle$ in the definition of $V$ and a particular orientation which depends on that choice.
\end{remark}

By the main gluing result in \cite{r:lgoa}, the GW potential of $\cX$ can be written
\begin{equation}\label{gwpot1}
GW(\cX)=\sum_{\Lambda}\prod_v \tilde V_{\cX,\Lambda}^{\bullet(v),\alpha(v)} \prod_e (-1)^{(m_e+\delta_0+\delta_\infty)|\mu_e|}z_{\mu_e}v_e^{|\mu_e|}
\end{equation}
where $v_e$ are formal parameters which track the degree of the map.

Applying Conjecture \ref{conjecture}, then applying orthogonality of characters along with the identity $\chi_\lambda(\{\xi^{(\mu_e)_j^ik-i}\mu_j^i\})=\xi^{-k\sum_i i|\lambda_i|}\overline{\chi_\lambda(\mu)}$, \eqref{gwpot1} becomes
\begin{equation}\label{gwpot2}
\sum_{\Lambda}\prod_v \tilde P_{\cX,\Lambda}^{\alpha(v)}(v) \prod_e \xi^{-k_e\sum_i i|(\lambda_e)_i|}(-1)^{(m_e+\delta_0+\delta_\infty)|\lambda_e|}v_e^{|\lambda_e|}
\end{equation}
where the sum is now over all $DT$ edge assignments and
\[
\tilde P_{\cX,\Lambda}^{\alpha(v)}(v)=\begin{cases}
\tilde{P}_{\lambda_1(v),\lambda_2(v),\lambda_3(v)}^{(-\alpha_1(v),\alpha_2(v))}(w_1(v),w_2(v),w_3(v)) & \text{ if } v \text{ is the initial vertex of } e_3(v)\\
\tilde{P}_{\lambda_1(v),\lambda_3(v),\lambda_2(v)}^{(-\alpha_2(v),\alpha_1(v))}(w_1(v),w_3(v),w_2(v)) & \text{ if } v \text{ is the final vertex of } e_3(v).
\end{cases}
\]


A tedious translation of the main result in \cite{bcy:otv}\footnote{We have taken into account a minor typo in the statement of the main theorem of \cite{bcy:otv} which we have confirmed with the authors.  In particular, in Theorem 12 of \cite{bcy:otv}, $H_{\nu'}$ should be replaced with $H_{\nu}$.} into our setup tells us that the reduced, multi-regular DT potential of $\cX$ is equal to \eqref{gwpot2} after negating $q$.  This finishes the proof.
\begin{remark}
We need two facts in the above translation of the main result in \cite{bcy:otv}. The first one is the symmetry property (\ref{dtsym}) of the DT vertex which eliminates the conjugate of the partition of the gerby leg in the gluing algorithm in \cite{bcy:otv}. The second fact is that the right (resp. left) normal bundle of an edge in \cite{bcy:otv} corresponds to the left (resp. right) normal bundle in our definition. However, we always have $(-1)^{n(e)(m_e+\delta_0+\delta_\infty)}=(-1)^{n(e)(m'_e+\delta'_0+\delta'_\infty)}$.
\end{remark}

\appendix

\section{Colored Young Diagrams and Loop Schur Functions}\label{sec:loop}

Through $n$-quotients (cf. \cite{rz:ggmv} Section 6), each $n$-partition $\lambda$ can be identified with a partition $\bar\lambda$ of $n|\lambda|$.  We color the Young diagram of $\bar\lambda$ with $n$-colors in the following way:
\begin{center}
\begin{ytableau}
*(yellow) & *(green) & *(white) & *(yellow)\\
*(white) & *(yellow) & *(green)\\
*(green) & *(white) & *(yellow)\\
*(yellow)
\end{ytableau}
\end{center}
with
\[
0\leftrightarrow\begin{ytableau}*(yellow)\end{ytableau}, \hspace{1cm} 1\leftrightarrow\begin{ytableau}*(green)\end{ytableau}, \hspace{.5cm}\text{and}\hspace{.5cm} 2\leftrightarrow\begin{ytableau}*(white)\end{ytableau}
\]

A \textit{semi-standard Young tableau} (SSYT) of $\bar\lambda$ is a numbering of the boxes so that numbers are weakly increasing left to right and strictly increasing top to bottom.   For each SSYT $T$ and $\square\in\bar\lambda$, we define the \textit{weight} $w(\square,T)$ to be the number appearing in that box.  To each $\bar\lambda$, $n$, and $T\in SSYT(\bar\lambda,n)$, we associate a monomial $q^T$ in $n$ infinite sets of variables $\{q_{i,j}|i\in\bZ_n,j\in\bN\}$:
\[
q^T:=\prod_{i=0}^{n-1}\prod_{\square\in\bar\lambda[i]}q_{i,w(\square,T)}.
\]
For example, to the SSYT (with $n=3$)
\[
T=\begin{ytableau}
*(yellow) 1& *(green) 1& *(white) 2& *(yellow) 4\\
*(white) 2& *(yellow) 3& *(green) 3\\
*(green) 4& *(white) 4& *(yellow) 6\\
*(yellow) 7
\end{ytableau}
\]
we associate the monomial
\[
q^T=q_{0,1}q_{0,3}q_{0,4}q_{0,6}q_{0,7}q_{1,1}q_{1,3}q_{1,4}q_{2,2}^2q_{2,4}.
\]
\begin{definition}
The \textit{loop Schur function} associated to $(\bar\lambda)$ is defined by
\[
s_{\lambda}(q_{i,j}):=\sum_{T\in SSYT(\bar\lambda,n)}q^T.
\]
\end{definition}

Denote by $s_{\lambda}(\bq)$ the function in $n$ variables obtained by making the substitution $q_{i,j}=q_i^j$ in $s_{\lambda}(q_{i,j})$.  The following result appears in both \cite{er:cpgisf} and \cite{n:hfgt}.

\begin{lemma}[\cite{er:cpgisf,n:hfgt}]\label{hookcontent}
\[
s_\lambda(\bq)=\frac{\prod_i \bq_i^{n_i(\bar\lambda)}}{\prod_{\square\in\bar\lambda}\left(1-\prod_i \bq_i^{h_i(\square)}\right)}.
\]
where $h_i(\square)$ denotes the number of color $i$ boxes in the hook defined by $\square$ and \[n_i(\bar\lambda):=\sum_k(k-1)(\text{\# of color i boxes in the $k$th row}).\]

\end{lemma}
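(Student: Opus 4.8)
The plan is to reduce the stated identity to a known generating-function evaluation for reverse plane partitions. First I would record the coloring rule that is implicit in the construction of $\bar\lambda$: the box in row $a$, column $b$ carries color equal to its content $b-a$ taken modulo $n$ (this is exactly the periodic pattern displayed in the sample tableau). Writing $c(\square)\in\bZ_n$ for the color of $\square$ and $w(\square,T)$ for its entry, the substitution $q_{i,j}=q_i^j$ turns the loop Schur function into
\[
s_\lambda(\bq)=\sum_{T\in SSYT(\bar\lambda,n)}\ \prod_{\square\in\bar\lambda}q_{c(\square)}^{\,w(\square,T)},
\]
so the whole problem becomes a weighted enumeration of semistandard tableaux of shape $\bar\lambda$, with the entry in each box weighted by the variable attached to its color.

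The first step is the standard bijection between semistandard tableaux and reverse plane partitions of the same shape. Letting $T_{\min}$ denote the minimal semistandard filling, the assignment $P(\square):=w(\square,T)-w(\square,T_{\min})$ sends $T$ to a reverse plane partition $P$ with nonnegative integer entries that are weakly increasing along both rows and columns. Under this bijection the weight factors, and since the minimal filling is fixed by the shape we obtain
\[
s_\lambda(\bq)=\Big(\prod_{\square}q_{c(\square)}^{\,w(\square,T_{\min})}\Big)\sum_{P}\ \prod_{\square\in\bar\lambda}q_{c(\square)}^{\,P(\square)},
\]
the sum running over reverse plane partitions $P$ of shape $\bar\lambda$. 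The prefactor is precisely the weight of the minimal tableau, which equals $\prod_i q_i^{n_i(\bar\lambda)}$ by the definition of $n_i(\bar\lambda)$ as the color-weighted count of boxes in each row; matching it against the stated numerator is pure bookkeeping.

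The substance of the proof is the evaluation of the remaining reverse-plane-partition sum, and here I would invoke Gansner's diagonal-refined hook generating function (equivalently, the Hillman--Grassl correspondence). In that statement each diagonal of the shape is assigned its own variable $x_c$, and the generating function for reverse plane partitions factors as a product over boxes, with $\square$ contributing $\bigl(1-\prod_{\square'\in H(\square)}x_{c(\square')}\bigr)^{-1}$, where $H(\square)$ is the hook of $\square$. Specializing by grouping diagonals according to their residue modulo $n$, i.e.\ setting $x_c=q_{c\bmod n}$, the hook product collapses to $\prod_i q_i^{h_i(\square)}$, precisely because $h_i(\square)$ is defined as the number of color-$i$ boxes in $H(\square)$. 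This produces exactly the denominator of Lemma \ref{hookcontent} and completes the argument.

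The main obstacle is the reverse-plane-partition evaluation itself, which carries all the content; the tableau-to-RPP reduction and the numerator bookkeeping are formal. One must either cite Gansner's theorem or reprove it via the Hillman--Grassl insertion, verifying that the correspondence respects diagonals (hence colors) so that the refined, color-graded statement holds rather than only the single-variable hook formula. As an alternative staying closer to the machinery already developed here, one could instead begin from the loop Jacobi--Trudi determinant \eqref{jactrud} and Lemma \ref{loopdet}, note that each single-row building block $\hat h_l^r$ is itself the hook formula for a one-row shape, and then extract the product formula by a Vandermonde-type manipulation of the determinant; this route is considerably more computational, and I would keep the Gansner argument as the primary proof.
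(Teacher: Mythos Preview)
The paper does not prove this lemma at all: it is stated with attribution to \cite{er:cpgisf,n:hfgt} and simply used as input elsewhere. So your proposal supplies an argument where the paper gives none, and the comparison you asked for is really ``your proof versus a citation.''

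Your approach is the standard one and it works. The bijection $T\mapsto T-T_{\min}$ from semistandard tableaux to reverse plane partitions with nonnegative entries is correct (the column-strict condition becomes weak increase down columns precisely because you subtract the row index), and Gansner's trace-refined generating function for reverse plane partitions is exactly the right tool: its diagonal variables specialize via $x_c\mapsto q_{c\bmod n}$ to give the colored hook product in the denominator, since the hook of a box meets each content value in a contiguous interval exactly once. Citing Gansner (or reproving it through Hillman--Grassl, which does respect diagonals) is entirely appropriate here; the determinantal alternative you sketch at the end would be much heavier and is not needed.

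One caution on the step you call ``pure bookkeeping.'' With the paper's convention that SSYT entries lie in $\bN$ (used elsewhere in the paper for positive integers) the minimal tableau has every box of row $k$ filled with $k$, so its weight is $\prod_i q_i^{\sum_k k\cdot(\#\text{ color }i\text{ boxes in row }k)}$, whereas the stated numerator $\prod_i q_i^{n_i(\bar\lambda)}$ uses the factor $(k-1)$. These differ by $\prod_i q_i^{|\bar\lambda[i]|}$. In other words, the lemma as printed matches an SSYT alphabet starting at $0$, not at $1$; for $n=1$ and $\bar\lambda=(1)$ one already sees $\sum_{j\geq 1}q_0^{\,j}=q_0/(1-q_0)$ versus the lemma's $1/(1-q_0)$. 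This is a genuine indexing mismatch you should resolve explicitly (either by declaring entries $\geq 0$ or by inserting the missing factor) rather than absorb into ``bookkeeping.'' It does not affect the structure of your proof.
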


\section{Rubber Integrals and Hurwitz Numbers}\label{sec:rubber}

Here we gather relevant information regarding rubber integrals which arise in our localization computations.  For $\nu$ and $\mu$ a pair of $n$-partitions, define the rubber integral generating function
\begin{equation}\label{eqn:rubber}
H_{\nu,\mu}(x,u):=\frac{1}{|\Aut(\nu)||\Aut(\mu)|}\sum_{g,\gamma}\int_{\Mbar_{g,\gamma}(\bP^1\times\cB\bZ_n;-\nu,\mu)//\bC^*}\psi_0^{2g-3+|\gamma|+l(\nu)+l(\mu)}u^{2g-2+l(\nu)+l(\mu)}\frac{x^\gamma}{\gamma!}
\end{equation}

For $n=1$, it is well known that the rubber integrals in \eqref{eqn:rubber} correspond to certain Hurwitz numbers.  In \cite{rz:ggmv}, this fact is generalized to compute the arbitrary rubber integrals in \eqref{eqn:rubber} in terms of wreath Hurwitz numbers.  In particular, we have
\[
\frac{r!}{|\Aut(\nu)||\Aut(\mu)|}\int_{\Mbar_{g,\gamma}(\bP^1\times\cB\bZ_n;\nu,\mu)//\bC^*}\psi_0^{2g-3+|\gamma|+l(\nu)+l(\mu)}=H_{\nu,\mu}^{g,\gamma}
\]
where $H_{\nu,\mu}^{g,\gamma}$ is the automorphism-weighted count of wreath Hurwitz covers $f:C\rightarrow\bP^1$ where the branch locus consists of a set of $|\gamma|+r+2$ fixed points (we fix the last two points at $0$ and $\infty$) and the maps satisfy the following conditions:
\begin{itemize}
\item The quotient $C/\bZ_n$ is a connected genus $g$ curve,
\item The monodromy around $0$ and $\infty$ is given by $\nu$ and $\mu$,
\item The monodromy around the branch point corresponding to $\gamma_i\in\gamma$ is given by the conjugacy class $\{\gamma_i,1,...,1\}$,
\item The monodromy around the $r$ additional branch points is given by the conjugacy class $\{2,1,...,1\}$.
\end{itemize}

We define

\begin{align*}
H^\bullet_{\nu,\mu}(x,u):&=\exp\left( \sum_{g,\gamma} H_{\nu,\mu}^{g,\gamma}\frac{u^r}{r!}\frac{x^\gamma}{\gamma!} \right)\\
&=\sum_{g,\gamma} H_{\nu,\mu}^{\chi,\gamma \bullet}\frac{u^r}{r!}\frac{x^\gamma}{\gamma!}
\end{align*}
where $H_{\nu,\mu}^{\chi,\gamma \bullet}$ is the wreath Hurwitz number with possibly disconnected covers.

By the Burnside formula, we compute
\[
H_{\nu,\mu}^{\chi,\gamma \bullet}=\sum_{|\lambda|=d}\left(f_T(\lambda)\right)^r \prod\left(f_{i}(\lambda)\right)^{m_i(\gamma)}\frac{\chi_{\lambda}(\mu)}{z_{\mu}}\frac{\chi_{\lambda}(\nu)}{z_{\nu}}
\]
where $f_T(\lambda)$ and $f_i(\lambda)$ are the \textit{central characters} defined by
\[
f_T(\lambda):=\frac{n|\lambda|(|\lambda|-1)\chi_{\lambda}(\{2,1,...,1\})}{2\cdot\text{dim}\lambda}
\]
and
\[
f_{i}(\lambda):=\frac{|\lambda|\chi_{\lambda}(\{\xi^i,1,...,1\})}{\text{dim}\lambda}.
\]

Therefore we obtain the following form for the generating function of wreath Hurwitz numbers:
\begin{equation}\label{hurgen}
H_{\nu,\mu}^\bullet(x,u)=\sum_{|\lambda|=d}\frac{\chi_{\lambda}(\mu)}{z_{\mu}}\frac{\chi_{\lambda}(\nu)}{z_{\nu}}e^{f_T(\lambda)u+\sum f_{i}(\lambda)x_i}.
\end{equation}

Using the fact that $\chi_\lambda(-\nu)=\overline{\chi_\lambda(\nu)}$, orthogonality of characters gives us the following relations:
\[
H_{\nu,\mu}^\bullet(x+y,u+v)=\sum_\sigma H_{\nu,\sigma}^\bullet(x,u)z_\sigma H_{-\sigma,\mu}^\bullet(y,v)
\]
and
\[
H_{\nu,-\mu}^\bullet(0,0)=\frac{1}{z_\mu}\delta_{\nu,\mu}.
\]

Purely for notational convenience regarding the particular way in which rubber integrals appear in our formulas, we also define the modified generating functions
\begin{align*}
\tilde H^\bullet_{\nu,\mu}(a):&=H^\bullet_{\nu,\mu}(a\xi_{2n}^{-1}x_1,...,a\xi_{2n}^{1-n}x_{n-1},\sqrt{-1}au)\\
&=\sum_{|\lambda|=d}\frac{\chi_{\lambda}(\mu)}{z_{\mu}}\frac{\chi_{\lambda}(\nu)}{z_{\nu}}e^{f_T(\lambda)\sqrt{-1}au+\sum f_{i}(\lambda)\xi_{2n}^{-i}ax_i}.
\end{align*}

Additionally, we recall from \cite{rz:ggmv} the following useful result.

\begin{lemma}\label{lem:central}
After the prescribed identification of variables,
\begin{equation}
e^{\frac{1}{n}\left(\sqrt{-1}f_T(\lambda)u+\sum \xi_{2n}^{-k}f_k(\lambda)x_k\right)}=\left(-\xi_{2n}\right)^{|\lambda|}\left(\prod_k \xi_n^{k|\lambda_k|}\right) \left(\prod_{(i,j)\in\bar\lambda} q_{j-i}^{j-i}\right)^{1/n}
\end{equation}
\end{lemma}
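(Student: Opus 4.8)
The plan is to prove Lemma~\ref{lem:central} by taking logarithms of both sides, substituting the change of variables, and matching the coefficient of $u$, the coefficient of each $x_k$, and the purely constant term separately. First I would record the effect of the substitution: using the relation $\prod_k q_k=q$ (which, together with the prescribed values of $q$ and of $q_k$ for $k>0$, also pins down $q_0$) and the elementary fact that $\sum_{k=1}^{n-1}\xi_n^{-ik}=-1$ for $i\not\equiv 0$, the identification of variables can be written uniformly for all $t\in\bZ$ (with indices read mod $n$) as
\[
q_t \;=\; \xi_n^{-1}\, q^{\,\delta_{n\mid t}}\prod_{i=1}^{n-1} e^{-\frac{\xi_n^{-it}}{n}(\xi_{2n}^i-\xi_{2n}^{-i})x_i},\qquad q=e^{\sqrt{-1}u}.
\]
Substituting this into $\prod_{(a,b)\in\bar\lambda} q_{b-a}^{(b-a)/n}$ and taking $\log$ splits the right side of the lemma into three pieces: a term linear in $u$ (coming only from the factor $q$, hence only from boxes of $\bar\lambda$ whose content is divisible by $n$), a term linear in the $x_i$, and a constant root-of-unity term coming from the $\xi_n^{-1}$ factors. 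The left side has $u$-coefficient $\tfrac{\sqrt{-1}}{n}f_T(\lambda)$, $x_k$-coefficient $\tfrac{1}{n}\xi_{2n}^{-k}f_k(\lambda)$, and no constant term, so the lemma is equivalent to three explicit identities.

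The representation-theoretic inputs needed to identify the left side are light. Since $\{\xi^k,1,\dots,1\}$ is the class sum over coordinates of a single $\xi^k$-insertion, it acts on the irreducible $\bZ_n\wr S_d$-module $\lambda=(\lambda^0,\dots,\lambda^{n-1})$ by the scalar read off from the $\bZ_n^d$-isotypic decomposition of $V_\lambda$, giving $f_k(\lambda)=\sum_{c=0}^{n-1}|\lambda^c|\,\xi_n^{kc}$ with no further work. For $f_T(\lambda)$, the transposition-class central character, one uses the Jucys--Murphy elements of $\bZ_n\wr S_d$ to evaluate the eigenvalue of the untwisted transposition class sum on $V_\lambda$. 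Each of these two formulas is then reconciled with the box sum over $\bar\lambda$ dictated by coefficient matching via the standard $n$-quotient dictionary: because $\bar\lambda$ is built from $(\lambda^0,\dots,\lambda^{n-1})$ with empty $n$-core, the multiset of contents of $\bar\lambda$ graded by residue modulo $n$ is completely determined by the parts of the quotient --- this is the same content displacement underlying the hook--content formula of Lemma~\ref{hookcontent}, and the relevant bookkeeping is carried out in \cite{rz:ggmv}. Running this dictionary converts $f_k(\lambda)=\sum_c|\lambda^c|\xi_n^{kc}$ and the transposition eigenvalue into exactly the $\xi_n$-weighted content sums appearing on the right.

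The hard part will be the third identity, which after these reductions takes the shape $(-\xi_{2n})^{|\lambda|}\prod_{k}\xi_n^{k|\lambda_k|}=\xi_n^{\frac{1}{n}\sum_{\square\in\bar\lambda}c(\square)}$: it is a genuine statement about the $n$-quotient, expressing the total content of $\bar\lambda$ modulo $n^2$ in terms of $|\lambda|$ and the sizes $|\lambda^k|$, and it is the piece where a consistent choice of branch of the logarithm (as in the remark following Definition~\ref{def:dt}) is essential even for the statement to make sense. Proving it cleanly calls for the beta-set / abacus description of the $n$-quotient, carefully tracking the shifts normalizing each runner. By contrast, once the content-displacement dictionary is in hand the $u$- and $x_k$-coefficient identities are routine, and the identification of $f_k(\lambda)$ requires no Jucys--Murphy input at all.
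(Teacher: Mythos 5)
The paper does not actually prove Lemma~\ref{lem:central}; it is quoted from \cite{rz:ggmv}, so the only meaningful comparison is against the computation that citation contains, and your strategy is the natural one: take logarithms, use $\prod_k q_k=q$ to get the uniform formula for $q_t$ (your displayed expression for $q_t$, including the $q^{\delta_{n\mid t}}$ factor picking out boxes of content divisible by $n$, is correct), and match the $u$-coefficient, the $x_k$-coefficients, and the constant root-of-unity term separately. The reduction to three identities is right, the identification $f_k(\lambda)=\sum_c|\lambda^c|\xi_n^{kc}$ is standard wreath-product character theory, and your isolation of the constant-term identity $(-\xi_{2n})^{|\lambda|}\prod_k\xi_n^{k|\lambda_k|}=\xi_n^{\frac{1}{n}\sum_{\square\in\bar\lambda}c(\square)}$ as the genuinely delicate piece (sensitive to the branch choice and to the orientation conventions of the $n$-quotient) is exactly the right diagnosis.

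The gap is that the two identities carrying all of the content are named but not established. First, the $u$-coefficient match requires the precise statement $f_T(\lambda)=\sum_{\square\in\bar\lambda,\ n\mid c(\square)}c(\square)$, i.e.\ that the transposition central character of $\bZ_n\wr S_{|\lambda|}$ equals the sum of the contents of $\bar\lambda$ lying in $n\bZ$; invoking Jucys--Murphy elements gives the eigenvalue on $V_\lambda$ in terms of the component partitions $\lambda^c$, but translating that into the stated box sum over $\bar\lambda$ is precisely the content-displacement bookkeeping you defer, and the $x_k$-coefficient match needs the same dictionary in its residue-graded form. Second, the total-content identity modulo $n^2$ is a nontrivial abacus computation (note it is sensitive to which of the two candidate $\bar\lambda$'s the convention selects: already for $n=2$, $\lambda=((1),\emptyset)$, the identity holds for $\bar\lambda=(1,1)$ and fails for $\bar\lambda=(2)$), and you give no argument for it. As written, the proposal is a correct and well-organized plan whose decisive steps remain declarations of intent; to count as a proof it needs the beta-set computation of $\sum c(\square)$ and of the residue-graded content multiset of $\bar\lambda$ in terms of $(\lambda^0,\dots,\lambda^{n-1})$ actually carried out.
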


\bibliographystyle{alpha}
\bibliography{biblio}

\end{document}